\newcommand{\defn}[1]{\textbf{#1}}
\newcommand{\noproof}{\hfill\qed}
\theoremstyle{plain}
\newtheorem{theorem}[subsection]{Theorem}
\newtheorem{proposition}[subsection]{Proposition}
\newtheorem{corollary}[subsection]{Corollary}
\newtheorem{lemma}[subsection]{Lemma}
\theoremstyle{definition}
\newtheorem{definition}[subsection]{Definition}
\newtheorem{notation}[subsection]{Notation}
\theoremstyle{remark}
\newtheorem{remark}[subsection]{Remark}
\newtheorem{example}[subsection]{Example}
\newtheorem{examples}[subsection]{Examples}
\numberwithin{equation}{section}
\newcommand{\kar}{\mathrm{char}}
\newcommand{\cre}{\mathrm{Cr}}
\newcommand{\N}{\mathbb{N}}
\newcommand{\Z}{\mathbb{Z}}
\newcommand{\fK}{\mathbb{K}}
\newcommand{\cosmash}{\diamond}
\newcommand{\Singular}{{\sc Singular}}
\newcommand{\Mathematica}{{\sc Mathematica}}
\newcommand{\ttab}[5]{\begin{tabular}{c} $#3 \qquad #4$ \\\hline\\[-.5ex] $\begin{aligned} #5 \end{aligned}$\end{tabular}\bigskip}
\renewcommand{\do}[1]{\@namedef{c#1}{\ensuremath{\mathcal{#1}}}}
\newcommand{\SET}{\mathsf{Set}}
\newcommand{\MAG}{\mathsf{Mag}}
\newcommand{\GP}{\mathsf{Gp}}
\newcommand{\BOOL}{\mathsf{Bool}}
\newcommand{\RNG}{\mathsf{Rng}}
\newcommand{\FUN}{\mathsf{{Fun}}}
\newcommand{\VECT}{\mathsf{Vect_{\fK}}}
\newcommand{\ALG}{\mathsf{Alg_{\fK}}}
\newcommand{\ABALG}{\mathsf{Ab_{\fK}}}
\newcommand{\CA}{\mathsf{CA_{\fK}}}
\newcommand{\CAp}{\mathsf{CA}^p_{\fK}}
\newcommand{\ANTI}{\mathsf{Anti_{\fK}}}
\newcommand{\ALT}{\mathsf{Alt_{\fK}}}
\newcommand{\TRIV}{\mathsf{Triv_{\fK}}}
\newcommand{\LIE}{\mathsf{Lie_{\fK}}}
\newcommand{\NIL}{\mathsf{Nil_2(Alg_{\fK})}}
\newcommand{\LLL}{\alpha}
\newcommand{\LLR}{\beta}
\newcommand{\LRL}{\gamma}
\newcommand{\LRR}{\delta}
\def\pullback{
\ar@{-}[]+R+<6pt,-1pt>;[]+RD+<6pt,-6pt>%
\ar@{-}[]+D+<1pt,-6pt>;[]+RD+<6pt,-6pt>}
\title[Associativity and the cosmash product]{Associativity and the cosmash product\\ in operadic varieties of algebras}
\author{Ülo Reimaa}
\author{Tim\ Van~der Linden}
\author{Corentin Vienne}
\email{tim.vanderlinden@uclouvain.be}
\email{ulo.reimaa@ut.ee}
\email{corentin.vienne@uclouvain.be}
\address[Ülo Reimaa, Tim Van~der Linden, Corentin Vienne]{Institut de Recherche en Math\'ematique et Physique, Universit\'e catholique de Louvain, che\-min du cyclotron~2 bte~L7.01.02, B--1348 Louvain-la-Neuve, Belgium}
\address[Ülo Reimaa]{Institute of Mathematics and Statistics, Faculty of Science and Technology, University of Tartu, Narva mnt 18, 51009 Tartu, Estonia}
\thanks{Research of the first author was supported by the Estonian Research Council grant PUTJD948. The second author is a Senior Research Associate of the Fonds de la Recherche Scientifique--FNRS. Research of the third author was supported by the {Fonds Thelam} of the {Fondation Roi Baudouin}. Computational resources have been provided by the Consortium des Équipements de Calcul Intensif (CÉCI), funded by the Fonds de la Recherche Scientifique de Belgique (F.R.S.-FNRS) under Grant No.~2.5020.11 and by the Walloon Region.}
\subjclass[2020]{18M70, 17A36, 08A35, 08C05, 18C05, 18E13}
\keywords{Non-associative algebra; algebraic operad; tensor product; semi-abelian category; cosmash product; commutator}
\begin{document}

\begin{abstract}
    In this article, we characterise the operadic variety of commutative associative algebras over a field via a (categorical) condition: the associativity of the so-called \emph{cosmash product}. This condition, which is closely related to commutator theory, is quite strong: for example, groups do not satisfy it. However, in the case of commutative associative algebras, the cosmash product is nothing more than the tensor product; which explains why in this case it is associative. We prove that in the setting of operadic varieties of algebras over a field, it is the only example. Further examples in the non-operadic case are also discussed.
\end{abstract}

\maketitle

\tableofcontents

\section{Introduction}

The original question we set out to answer at the onset of this work was to characterise, by means of some universal construction, when the objects in a variety of algebras over a field $\fK$ have a multiplication that is associative. Note that what we here call an \emph{algebra} is a $\fK$-vector space $A$ equipped with a bilinear multiplication ${\cdot\colon A\times A\to A}$, which is not necessarily associative or unitary. We let $\ALG$ denote the category of such $\fK$-algebras, where morphisms are linear maps that preserve the multiplication. A~\emph{variety of $\fK$-algebras} is an equational class of algebras over~$\fK$: any subvariety of $\ALG$, as for instance the variety of associative algebras, which satisfy the identity $x(yz)=(xy)z$, or the variety of Lie algebras, which satisfy the Jacobi identity ($x(yz)+y(zx)+z(xy)=0$) and anti-commutativity ($xy=-yx$).

In previous work, characterisations of the variety $\LIE$ of Lie algebras over an infinite field $\fK$ (such that $\kar(\fK)\neq 2$) as a subvariety of $\ALG$ were obtained, in essentially two different ways: in~\cite{GM-VdL2,GM-VdL3}, it is shown that $\LIE$ is the only non-abelian \emph{locally algebraically cartesian closed}~\cite{Gray2012} subvariety of $\ALG$; and in~\cite{Edinburgh}, the variety $\LIE$ is shown to be the only subvariety of $\ALG$ whose actions are representable~\cite{BJK2}. So we have two independent categorical descriptions of the variety $\LIE$, and thus of the Jacobi identity (however, only for anti-commutative algebras).

This naturally led to our present question: \emph{How to characterise \textbf{associative} algebras?} The answer we found---and this is the subject of our present article---is a categorical description of the variety of \emph{commutative} associative algebras over~$\fK$ amongst all so-called \emph{operadic} varieties over~$\fK$, where a set of multilinear identities suffices to describe the algebras in the variety. As it turns out, the variety of commutative associative algebras over $\fK$ is the only operadic variety whose so-called \emph{cosmash products} are not just symmetric, but also associative in the sense of Carboni and Janelidze---see Theorem~\ref{main-result}. Thus, associativity of an algebra (a~``microcosmic'' property at object level) is characterised by associativity on the categorical level (a ``macrocosmic'' property). This is an instance of what seems to be a common phenomenon, comparable, for instance, with the definition of monoids in a monoidal category (which, however, fits a slightly different pattern in that there, a correspondence between two types of \emph{structure} is apparent).

\subsection{The cosmash product}
In~\cite{Smash}, A.\ Carboni and G.\ Janelidze extend the definition of the classical \emph{smash product} from pointed topological spaces to pointed objects in  suitable categories. On the way, they study a condition they call \emph{smash associativity}, which is basically the requirement that any ternary smash product $X\wedge Y\wedge Z$ is canonically isomorphic to either one of the repeated binary smash products $(X\wedge Y)\wedge Z$ and $X\wedge (Y\wedge Z)$. Depending on the surrounding category, this may or may not happen.

In the present article, working in the context of algebras over a field, we interest ourselves in the dual notion: \emph{cosmash associativity}. Indeed, binary \emph{co}smash products are well-known to be useful in the development of \emph{commutator theory} in general algebraic contexts~\cite{MM-NC,HVdL,Actions}, essentially because they may be seen as ``objects of formal commutators''. For example, in the case of groups, the binary cosmash product $X\diamond Y$ of two groups $X$ and $Y$ is generated by commutator words of the form $xyx^{-1}y^{-1}$ (see Section~\ref{Section Definition Cosmash}). The binary definition of the cosmash product is the $n=2$ case of a more general $n$-ary definition~\cite{Smash, HVdL}.

In general, iterating the binary cosmash product does not give the desired higher-dimensional notion of $n$-fold cosmash product. For example, letting $X$, $Y$, $Z$ be three objects in a (suitable) category, we write $X\diamond Y \diamond Z$ for the ternary cosmash product ($n=3$) of $X$, $Y$ and $Z$. This object need not be isomorphic to either $(X\diamond Y)\diamond Z$ or $X \diamond(Y\diamond Z)$---in Section~\ref{Section Cosmash Associativity}, we explain why this isomorphism almost never holds for groups or Lie algebras. \emph{Cosmash associativity} is the condition that these three objects are canonically isomorphic. The goal of this paper is to investigate this categorical condition in the context of varieties of algebras over a field. Our hunch was that it must be a relatively strong condition, since even Lie algebras do not satisfy it. In fact, our main results, which are Theorem~\ref{deg-2-identities} and Theorem~\ref{main-result}, say that this condition is so strong that, in the context of homogeneous varieties of non-associative algebras, it characterises when the multiplication is both commutative and associative.

\subsection{Structure of the text}
The paper is organised as follows. Section~\ref{Section Varieties} gives a short survey of the mathematical context: non-associative algebras and their varieties. Special attention goes to \emph{homogeneous} and \emph{operadic} varieties of algebras over a field, since our main theorems appear in those precise settings. Section~\ref{Section Definition Cosmash} defines cosmash products in their binary, ternary and general form, and gives an overview of some basic properties and examples. Section~\ref{Higgins Commutator} recalls how cosmash products are related to (Higgins) commutators. Section~\ref{Section Cosmash Associativity} discusses the definition of cosmash associativity: certain canonical comparison morphisms are isomorphisms. We study examples and have a first look at some computational techniques we use later on. Section~\ref{Section Cosmash Is Tensor} recalls that in the variety of commutative associative algebras, the cosmash product is the tensor product; it follows that this variety is cosmash associative.

In order to properly understand the statements of our two main theorems, reading Sections~\ref{Section Varieties},~\ref{Section Definition Cosmash} and~\ref{Section Cosmash Associativity} should be sufficient. The details of their proofs and the techniques involved are explained in Sections~\ref{Section Surjectivity},~\ref{Section Injectivity} and~\ref{Section Operadic Case}.

Section~\ref{Section Surjectivity} looks at the consequences of \emph{surjectivity} of the comparison maps. Proposition~\ref{Proposition Lambda Rules} shows that it is related to the validity of certain degree-three equations, such as for instance the equation $(yx)z = \LLR_1 x(yz) + \LLR_2 x(zy) + \LLR_3 (yz)x + \LLR_4 (zy)x$ which allows us to ``pull the $x$ out of the parentheses''. Section~\ref{Section Injectivity} is devoted to the \emph{injectivity} of the comparison maps. First we give examples of varieties where these are not injective. We then use computer algebra to prove in Proposition~\ref{Proposition Degree Two} that a homogeneous variety which is cosmash associative should satisfy an identity of degree less than or equal to $2$. From this we may deduce our first main result: Theorem~\ref{deg-2-identities}, which says that if a homogeneous variety of $\fK$-algebras is cosmash associative, then it is a subvariety of the variety of commutative associative $\fK$-algebras. Section~\ref{Section Operadic Case} interprets this in the particular setting of operadic varieties (for example, when the characteristic of the field is zero). In this context, we understand with Theorem~\ref{main-result} that in a non-abelian cosmash associative operadic variety, no non-trivial identities can be added to commutativity and associativity. In other words, the variety of commutative associative algebras is characterised by cosmash associativity. Section~\ref{Section Non-Operadic Case} is an attempt to extend our results to the case of non-operadic varieties. We study some examples of varieties which could not be considered in the setting of the main theorems. The final Section~\ref{Section Final Remarks} discusses some ideas for further development of the theory.

\section{(Operadic) varieties of algebras over a field}\label{Section Varieties}

In this section, we introduce the algebraic setting where our main results are expressed: the context of \emph{varieties of non-associative $\fK$-algebras}. Those are basically collections of algebras over $\fK$ satisfying a chosen set of equations of a certain type, i.e., an equational class. Unless otherwise stated, $\fK$ will denote a field. Some of our results hold when $\fK$ is an integral domain, or even just a commutative ring with unit.

Let us recall some well-known concepts in order to present things in a consistent way. The interested reader may find a more exhaustive presentation of this material in~\cite{VdL-NAA}. By a \defn{(non-associative) $\fK$-algebra} we mean a $\fK$-vector space equipped with a bilinear operation called the \emph{multiplication}. We do not assume the existence of a unit element, or place any assumptions on the multiplication besides bilinearity. We let $\ALG$ denote the category of not necessarily associative $\fK$-algebras. In this category, the morphisms are the $\fK$-linear maps which preserve the multiplication. In~\cite{Zinbiel}, this type of algebra is called a \defn{magmatic algebra}.

We consider the functor $\SET \rightarrow \ALG$ which associates to a set $X$ the \emph{free $\fK$-algebra} generated by elements of $X$. This functor is left adjoint to the forgetful functor and factorises through the \emph{free magma functor} $M\colon\SET \rightarrow \MAG$ and the \emph{magma algebra functor} $\fK [-] \colon\MAG \rightarrow \ALG$. In particular, for a set~$X$, $M(X)$ is the magma of non-empty (non-associative) words in $X$, which are sometimes represented by means of \emph{binary trees} (which are rooted and proper, with leaves labelled by elements in~$X$). By a \defn{non-associative polynomial} $\varphi $ on a set~$X$ we mean an element of $\fK[M(X)]$. Such a polynomial is said to be a \defn{monomial} if it is a scalar multiple of an element in $M(X)$. For example, if $X=\lbrace x,y,z \rbrace$, then $xy+yx$, $x^2+yz$ and $x(yz)$ are polynomials in $X$ and only the last one is a monomial. The \defn{type} of a monomial $\varphi$ on a set $\lbrace x_1, \dots, x_n \rbrace$ is an $n$-tuple $(k_1, \dots , k_n)\in \N^n$ where $k_i$ is the number of times $x_i$ appears in $\varphi$. The \defn{degree} of $\varphi$ is $k_1 + \dots + k_n$. A polynomial is said to be \defn{homogeneous} if all its monomials are of the same type. A homogeneous polynomial is said to be \defn{multilinear} if it is of type $(1, \dots , 1)$.

\begin{definition}\label{Definition identity variety}
    A non-associative polynomial $\varphi=\varphi(x_1,\dots ,x_n)$ is called an \defn{identity} of an algebra $A$ if $\varphi(a_1, \dots , a_n)=0$ for all $a_1$, \dots, $a_n \in A$. We also say that~$A$ \emph{satisfies} the identity $\varphi$.

    Let $X$ be a set of variables and $I$ be a subset of $\fK[M(X)]$. The class of all $\fK$-algebras that satisfy all identities in $I$ is called the \defn{variety of $\fK$-algebras} determined by $I$. Given such an $I$, we say that a variety \defn{satisfies the identities in $I$} if all algebras in this variety satisfy the given identities. In particular, if the variety is determined by a set of homogeneous polynomials, then we say that the variety is \defn{homogeneous}, and if the variety is determined by a set of multilinear polynomials, then we say that it is \defn{operadic}.
\end{definition}

For the sake of simplicity, we may sometimes not write an identity as a polynomial but rather as an equality. For example, for commutativity, in order to express that the polynomial $xy-yx$ is an identity, we may write that $xy=yx$ (or $xy-yx=0$) is satisfied in the algebra under consideration. This abuse of terminology will make the reading easier.

\begin{remark}\label{Remark operadic}
    If the characteristic of the field $\fK$ is zero, then by a multilinearisation process, any variety of non-associative algebras over $\fK$ is operadic (see~\cite[Corollary~3.7]{Osborn}). For instance, the identity $xx=0$ for alternating algebras implies ${(y+z)(y+z)=0}$, so that $0=yy+yz+zy+zz=yz+zy$, which means that the algebras are anti-commutative; in characteristic zero, this multilinear identity determines the class of alternating algebras, which may thus be viewed as an operadic variety.

    The reason we call the varieties determined by multilinear polynomials ``operadic'' lies in Proposition 5.7.3 of the reference book~\cite{LodayValette-AlgebraicOperads} about \emph{algebraic operads}. In fact, any type of algebras (over a field of characteristic $0$) in the sense of operads whose identities are multilinear always determines an \emph{operad} whose algebras form the given variety.

    In some sense, ``operadic'' simply means ``expressible in a (symmetric) monoidal category'', as is the viewpoint in the categorical references~\cite{MacLane-CA,Kelly-Operads} on this subject. Following their terminology, our varieties determined by multilinear polynomials might instead be called \emph{PROP varieties of $\fK$-algebras}.
\end{remark}

\begin{examples}\label{Examples varieties}
    A $\fK$-algebra $A$ is said to be \defn{associative} if the polynomial $x(yz)-(xy)z$ is an identity of $A$, or equivalently if $a(bc)=(ab)c$ for all $a$, $b$, $c\in A$. Similarly, we say that $A$ is \defn{anti-associative} if it satisfies the identity $x(yz)+(xy)z$, \defn{commutative} if $xy-yx=0$, \defn{anti-commutative} if $xy+yx=0$, \defn{abelian} if $xy=0$ and \defn{trivial} if $x=0$.

    The variety of commutative associative $\fK$-algebras will be denoted by $\CA$. We write $\ANTI$ for the variety of anti-associative and anti-commutative $\fK$-algebras. The subvariety $\ABALG$ of $\ALG$ determined by the abelian algebras is isomorphic to the variety $\VECT$ of $\fK$-vector spaces. An algebra is trivial if and only if it is a singleton; the variety of such algebras is denoted $\TRIV$.

    We write $\LIE$ for the variety of \defn{Lie algebras} over $\fK$, which are anti-commutative and satisfy the \defn{Jacobi identity} $x(yz)+y(zx)+z(xy)$.

    All the above-mentioned examples are operadic varieties; let us give some non-operadic examples. The non-homogeneous variety $\BOOL_\fK$ of \emph{Boolean $\fK$-algebras} is the variety determined by associativity, commutativity and the identity $xx-x$. For a field $\fK$ of prime characteristic $p$ we denote by $\CAp$ the homogeneous variety of commutative associative $\fK$-algebras satisfying the additional identity $x^p=0$. Non-operadic varieties are the subject of Section~\ref{Section Non-Operadic Case}.
\end{examples}

\begin{remark}
    Since a variety of non-associative algebras is always a \emph{variety of $\Omega$-groups} in the sense of~\cite{Higgins}, it is a \emph{semi-abelian category}~\cite{Janelidze-Marki-Tholen}. This will be useful in Section~\ref{Higgins Commutator}.
\end{remark}

The commutative and anti-commutative cases are of critical importance, because of the following lemma:

\begin{lemma}\label{Lemma Degree Two}
    Let $\fK$ be a field. If $\cV$ is a variety of $\fK$-algebras satisfying a non-trivial homogeneous identity of degree $2$, then it must satisfy at least one of the identities $xy - yx$, $xy +yx$ or $xy$. In particular, if $\cV$ is non-abelian, then it is either a subvariety of the variety of commutative algebras, or a subvariety of the variety of anti-commutative algebras.
\end{lemma}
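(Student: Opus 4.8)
The plan is to classify the possible shapes of a degree-$2$ homogeneous non-associative polynomial and then to treat each shape by a one-line substitution argument. First I would note that a non-associative word of degree $2$ is simply a product of two leaves, so every monomial of degree $2$ is a scalar multiple of some $x_ix_j$. Since a homogeneous polynomial has all of its monomials of the same type, a non-trivial degree-$2$ homogeneous identity $\varphi$ of $\cV$ must, after discarding any variable that does not actually occur in it, take one of only two forms: either $\varphi=\lambda x^2$ with $\lambda\neq 0$ (type $(2)$, one variable), or $\varphi=\lambda xy+\mu yx$ with $(\lambda,\mu)\neq(0,0)$ (type $(1,1)$, two variables).

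Next I would dispose of the one-variable case. If $\cV$ satisfies $x^2=0$, then substituting $x+y$ for $x$ and expanding gives $0=(x+y)^2=x^2+xy+yx+y^2=xy+yx$, so $\cV$ satisfies $xy+yx$. This computation only uses additivity, so it is valid over any commutative ring of scalars and requires no assumption on $\kar(\fK)$.

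Then I would handle the two-variable case $\lambda xy+\mu yx=0$. Evaluating this identity at $x=y$ yields $(\lambda+\mu)x^2=0$. If $\lambda+\mu\neq 0$, then $x^2=0$ holds in $\cV$, and by the previous step $\cV$ satisfies $xy+yx$. If $\lambda+\mu=0$, then, since $(\lambda,\mu)\neq(0,0)$, we have $\lambda\neq0$ and $\mu=-\lambda$, so the identity reads $\lambda(xy-yx)=0$, hence $\cV$ satisfies $xy-yx$. In every case $\cV$ satisfies at least one of $xy-yx$, $xy+yx$, $xy$ (in fact the third never needs to be invoked). For the final assertion, if $\cV$ is non-abelian it cannot satisfy $xy$, so it satisfies $xy-yx$ or $xy+yx$, i.e.\ it is a subvariety of the variety of commutative algebras or of the variety of anti-commutative algebras.

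I do not expect a genuine obstacle here: the argument is essentially a finite case check once the classification of degree-$2$ homogeneous polynomials is in place. The one point that deserves care is precisely that classification step — in particular the observation that variables not appearing in $\varphi$ are irrelevant, so that homogeneity forces $\varphi$ into one of the two listed normal forms — after which the two substitution computations ($x\mapsto x+y$ in the unary case, $x=y$ in the binary case) finish the proof.
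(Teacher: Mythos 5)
Your proof is correct, and it follows the same overall skeleton as the paper's (classify the degree-$2$ homogeneous identity by type, then conclude by elementary substitutions), but the key computation in the binary case is genuinely different. The paper, faced with $\lambda_2 xy+\lambda_3 yx=0$ with both coefficients non-zero, writes $xy$ as a scalar multiple of $yx$ and applies the relation twice (swapping $x$ and $y$), obtaining $(1-\lambda^2)xy=0$ and hence abelianness or $\lambda\in\{-1,1\}$; the one-variable identity $xx=0$ is first reduced to the binary case by polarisation. You run the reduction in the opposite direction: you evaluate $\lambda xy+\mu yx=0$ on the diagonal $x=y$, so that the case $\lambda+\mu\neq 0$ collapses to $x^2=0$ and then to $xy+yx=0$ by polarisation, while $\lambda+\mu=0$ gives commutativity outright. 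Both routes are one-line substitution arguments valid over any field (your diagonal/cancellation step and the paper's division by $\lambda_2$ each use invertibility of non-zero scalars); the paper's swap argument isolates the proportionality constant $\lambda=\pm1$ explicitly, whereas yours gives the slightly cleaner dichotomy that one of $xy-yx$ or $xy+yx$ always holds (the identity $xy$ being subsumed), which, as you note, is all that the ``in particular'' clause needs.
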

\begin{proof}
    A homogenous identity of degree $2$ is necessarily an element $\varphi(x,y)$ of the free $\fK$-algebra on two generators $x$ and $y$, so a polynomial of the form
    \[
        \lambda_1xx+\lambda_2xy+\lambda_3yx+\lambda_4yy.
    \]
    Homogeneity implies that either $\lambda_1xx=0$ or $\lambda_2xy+\lambda_3yx=0$. Note that the former implies the latter. If now either $\lambda_2=0$ or $\lambda_3=0$, then we see that $yx=0$ or $xy=0$ and $\cV$ is abelian. Otherwise, we write $\lambda=-\lambda_3/\lambda_2$ and $xy=-\lambda yx=\lambda^2xy$, so that $(1-\lambda^2)xy=0$. Hence either $\cV$ is abelian or $\lambda\in \{-1,1\}$.
\end{proof}


\section{Cosmash products}\label{Section Definition Cosmash}
In order to define \emph{cosmash associativity}, which is the central concept of this paper, in Section~\ref{Section Cosmash Associativity}, we need to explain what is a \emph{cosmash product}. In this section, we start with the \emph{binary cosmash product}, ask ourselves how we can extend it to a ternary one, and conclude that those two are specific cases (for $n\in\{2,3\}$) of an $n$-ary definition. In Section~\ref{Higgins Commutator} we shall see how, in the context of a so-called \emph{semi-abelian} category, this leads to a categorical approach to the \emph{Higgins commutator}. In this section, and throughout the rest of the paper unless otherwise stated, we work in a pointed category ($0=1$) with finite sums, finite products, and kernels.

\begin{notation}
    Let $\cC$ be a pointed category with finite sums and finite products and $X$, $Y$ two objects of $\cC$. Then there exists a canonical morphism
    \begin{align*}
        \Sigma_{X,Y}\coloneq\bigl(\begin{smallmatrix}
                                          1_X & 0 \\
                                          0 & 1_Y
                                      \end{smallmatrix}\bigr)\colon X + Y \rightarrow X \times Y.
    \end{align*}

\end{notation}

\begin{definition}\label{Definition Binary Cosmash}
    The \defn{binary cosmash product} $X\diamond Y$ of two objects $X$ and $Y$ in a pointed category with finite sums, finite products, and kernels is defined as the kernel
    \[
        \xymatrix{0 \ar[r] & X\diamond Y \ar@{{ |>}->}[r]^-{\iota_{X,Y}} & X + Y \ar[r]^-{\Sigma_{X,Y}} & X \times Y}
    \]
    of the morphism $\Sigma_{X,Y}$.
\end{definition}

This definition was first given (dually) by Carboni and Janelidze in~\cite{Smash} where they define the (binary) \emph{smash} product as the cokernel of $\Sigma_{X,Y}$. Independently, in~\cite{MM-NC}, Mantovani and Metere used the binary cosmash product as a so-called \emph{formal commutator} in a categorical approach to the Higgins commutator of~\cite{Higgins}. Later, in~\cite{HVdL}, Hartl and Van der Linden recovered it as a special case of a \emph{cross-effect}, using \emph{higher} cosmash products---i.e., $n$-ary cosmashes for $n\geq 2$, see Definition~\ref{nary cosmash}---to capture the higher commutators of~\cite{Higgins} as in Section~\ref{Higgins Commutator} below. See also~\cite{BeBou} for a complementary point of view.

\begin{examples}\label{Examples binary cosmash}
    \begin{enumerate}
        \item If the category $\cC$ is additive, then any cosmash product $X \diamond Y$ is trivial, because each comparison morphism $\Sigma_{X,Y}$ is an isomorphism. (We shall not worry here about potential non-existence of kernels in additive categories, since we are only ever taking kernels of monomorphisms.)
        \item In the category $\GP$ of groups, it is well known that the cosmash product of two objects $X$ and~$Y$ is the subgroup of $X+Y$ generated by words of the form $xyx^{-1}y^{-1}$ and $yxy^{-1}x^{-1}$, where $x\in X$ and $y\in Y$. We recall the argument: any element $w$ of in $X + Y$ can be written as
              \begin{align*}
                  w =x_1y_1 \cdots x_ny_n
              \end{align*}
              where $x_i\in X$ and $y_i\in Y$ for all $i\in\{1, \dots , n\}$. If now $w$ is $x$, $y$, $xy$, $yx$, $xyx'$ or $yxy'$, then its image under $\Sigma_{X,Y}$ is respectively $(x,1)$, $(1,y)$, $(x,y)$, $(x,y)$, $(xx',y)$ or $(x,yy')$. For such a $w$, belonging to the kernel $X\cosmash Y$ of $\Sigma_{X,Y}$ simply means that $w$ is trivial. If, however, $w=xyx'y'$ or $w=yxy'x'$, then $\Sigma_{X,Y}$ sends it to $(xx',yy')$, so that it belongs to the kernel of $\Sigma_{X,Y}$ precisely when $x'=x^{-1}$ and $y'=y^{-1}$. Now, a generic word $w=x_1y_1 \cdots x_ny_n$ of length $2n$ can be rewritten as
              \begin{align*}
                  w=(x_1y_1x_1^{-1}y^{-1}_1)y_1x_1x_2y_2 \cdots x_ny_n,
              \end{align*}
              where $x_1x_2$ is another element $x'$ of $X$. We can observe that the word $y_1x'y_2 \cdots x_ny_n$ is of length $2n-1$. Therefore, by an induction argument, we may write
              \begin{align*}
                  w=(x_1y_1x_1^{-1}y_1^{-1})(y_1x'y_1^{-1}x'^{-1})\cdots (x''y''x_ny_n)
              \end{align*}
              which is sent to $(x''x_n,y''y_n)$ by $\Sigma_{X,Y}$. Hence if $w\in X \diamond Y$, then $x''=x_n^{-1}$ and $y''=y_n^{-1}$, so that $w$ is a product of elements of the form $xyx^{-1}y^{-1}$ and $yxy^{-1}x^{-1}$ with $x\in X$ and $y\in Y$.
        \item In a variety $\cV$ over a field $\fK$, it is easy to see that for two algebras $X$ and~$Y$ in $\cV$, the coproduct $X+Y$ is the set of (equivalence classes with respect to the identities of $\cV$ and those of $X$ and $Y$ of) the (non-associative) polynomials with variables in $X$ and~$Y$. Hence the cosmash product $X \diamond Y$ is the subalgebra of $X+Y$ determined by such polynomials where each monomial contains at least one variable in $X$ and one in $Y$. Indeed, those and no others are sent by $\Sigma_{X,Y}$ to $(0_X,0_Y)$. For example, $xy$ is in $X\diamond Y$ but $x^2y + y^3$ is not.
        \item Section~\ref{Section Cosmash Is Tensor} explains why in the variety $\CA$, the cosmash product $X\diamond Y$ is the tensor product $X \otimes Y$. (This is not to be confused with the well-known fact that in the category of \emph{unitary} commutative associative algebras, the \emph{coproduct} is already the tensor product: see, for instance, \cite{MacLane:Homology}.)
    \end{enumerate}
\end{examples}

Next, we want to extend our definitions to $n$-ary cosmash products for any natural number $n\geq 2$. A first idea might be to define the ternary cosmash product of three objects $X$, $Y$, $Z$ in $\cC$ by iterating the binary cosmash product, which gives us $X\diamond(Y\diamond Z)$ or $(X\diamond Y)\diamond Z$. The problem with this approach is that those objects are not isomorphic in general, as shown for instance in Examples~\ref{Example Not Surjective} and Examples~\ref{Examples Injectivity}. Here, we may already notice that in the category $\ALG$, for some $x\in X$, $y\in Y$ and $z\in Z$, the element $x(yz)$ lies in $X\diamond (Y\diamond Z)$, yet it cannot be seen as an element of $(X\diamond Y) \diamond Z$, since $x$ is outside the brackets. Actually, the elements of $X\diamond (Y\diamond Z)$ are not exactly polynomials in the elements of $X$, $Y$ and~$Z$, but rather polynomials with variables in $X$ and in $Y \diamond Z$.

In general, there is not even a canonical \emph{morphism} from $X\diamond(Y\diamond Z)$ to $(X\diamond Y)\diamond Z$. The idea is now to consider an \emph{unbiased} ternary cosmash product $X\diamond Y\diamond Z$ which may act as a ``common receptacle'' for both, as in the diagram
\[
    \xymatrix{& X\diamond Y\diamond Z\\
    X\diamond(Y\diamond Z) \ar[ru]^-{\Phi_{X,Y,Z}}  && (X\diamond Y)\diamond Z. \ar[lu]_-{\Psi_{X,Y,Z}}}
\]
Asking that both $\Phi_{X,Y,Z}$ and $\Psi_{X,Y,Z}$ are isomorphisms then provides us with a way to express that $X\diamond(Y\diamond Z)$ and $(X\diamond Y)\diamond Z$ (and $X\diamond Y\diamond Z$) are isomorphic.

A first idea towards a definition of a ternary cosmash product $X\diamond Y\diamond Z$ might be to take the kernel of the morphism
\[
    \Bigl(\begin{smallmatrix}
            1_X & 0 & 0 \\
            0 & 1_Y & 0 \\
            0 & 0 & 1_Z
        \end{smallmatrix}\Bigr)\colon X+Y+Z \to X \times Y \times Z.
\]
However, in the category of groups, for example, the word $xyx^{-1}y^{-1}$ will be an element of this kernel. This does not agree with what a commutator word in $X$, $Y$ and $Z$ is supposed to be, since it contains no elements of $Z$. Instead, we follow the approach of~\cite{Smash, HVdL,BeBou}:

\begin{definition}
    Let $X$, $Y$ and $Z$ be three objects of $\cC$ and consider the canonical morphism
    \begin{align*}
        \Sigma_{X,Y,Z} \coloneq
        \Bigl(\begin{smallmatrix}
                      \iota_X & \iota_Y & 0 \\
                      \iota_X & 0 & \iota_Z \\
                      0 & \iota_Y & \iota_Z
                  \end{smallmatrix}\Bigr)
        \colon X + Y + Z \rightarrow (X+Y) \times (X+Z) \times (Y+Z).
    \end{align*}
    The \defn{ternary cosmash product} $X\diamond Y\diamond Z$ of $X$, $Y$ and $Z$ is defined as the kernel of $\Sigma_{X,Y,Z}$. The canonical inclusion is denoted $\iota_{X,Y,Z}\colon X\diamond Y\diamond Z\to X+Y+Z$.
\end{definition}

With this definition, in the case of groups we cannot have elements such as $xyx^{-1}y^{-1}$ in the ternary cosmash product. However, we have elements such as $xyx^{-1}y^{-1}zyxy^{-1}x^{-1}z^{-1}$ in $X\diamond Y\diamond Z$ for some $x\in X$, $y\in Y$ and $z\in Z$. In the category $\ALG$, elements of $X\diamond Y\diamond Z$ will be polynomials where each monomial contains variables in $X$, $Y$ and $Z$. The same holds for any subvariety of~$\ALG$. In any additive category, the ternary cosmash product is the zero object, since there $\Sigma_{X,Y,Z}$ is a monomorphism.

\subsection{Another point of view}
The following well-known lemma will be useful for us; its proof follows immediately from the universal properties involved. It allows us to obtain the alternative construction of the ternary cosmash product of Lemma~\ref{Lemma ternary cosmash via cross effect}.

\begin{lemma}\label{Lemma Kernel and Intersection}
    For some $n\geq 2$, consider objects $Y_1$, \dots, $Y_n$ and a morphism $(x_1,\dots, x_n)\colon X\to Y_1\times \cdots \times Y_n$. Then the kernel of $(x_1,\dots, x_n)$ is the intersection $\bigwedge_{i=1}^{n}\ker(x_i)$ of the kernels of all the $x_i$.\noproof
\end{lemma}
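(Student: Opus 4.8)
The plan is to show that the subobject $\ker\bigl((x_1,\dots,x_n)\bigr)\hookrightarrow X$ has exactly the universal property of the meet of the subobjects $\ker(x_i)\hookrightarrow X$, and hence coincides with $\bigwedge_{i=1}^n\ker(x_i)$.

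First I would record the elementary observation that a morphism $f$ with codomain $Y_1\times\cdots\times Y_n$ is zero if and only if $\pi_i\circ f=0$ for every product projection $\pi_i\colon Y_1\times\cdots\times Y_n\to Y_i$; this is immediate from the universal property of the product, comparing $f$ with the zero cone and using pointedness. Since $x_i=\pi_i\circ(x_1,\dots,x_n)$, applying this to an arbitrary test morphism $f\colon Z\to X$ yields the equivalences
\[
    (x_1,\dots,x_n)\circ f=0
    \iff x_i\circ f=0 \text{ for all } i
    \iff f \text{ factors through } \ker(x_i) \text{ for all } i ,
\]
where the second equivalence is the universal property of each kernel (together with the fact that a factorisation through a monomorphism, when it exists, is unique).

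Finally, ``$f$ factors through $\ker(x_i)$ for every $i$'' is precisely the statement that $f$ factors through the intersection $\bigwedge_{i=1}^n\ker(x_i)$, by definition of the latter as the meet in the poset of subobjects of $X$. Hence $\ker\bigl((x_1,\dots,x_n)\bigr)$ and $\bigwedge_{i=1}^n\ker(x_i)$ admit exactly the same morphisms into them from every object of the category, so by uniqueness of representing objects they agree as subobjects of $X$; this same argument shows that the meet exists in the present setting. I do not anticipate any genuine difficulty here: the only point worth spelling out is the meaning of ``intersection'' as a meet of subobjects, and the displayed equivalences take care of everything else.
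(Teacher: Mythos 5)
Your argument is correct and is precisely the one the paper has in mind: the paper omits the proof as ``immediate from the universal properties involved'', and your chain of equivalences via test morphisms $f\colon Z\to X$, the product projections and the kernels simply spells this out. Nothing is missing; the remark that this construction also provides the existence of the meet is a sensible touch.
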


\begin{lemma}\label{Lemma ternary cosmash via cross effect}
    The ternary cosmash product can be obtained out of the binary cosmash as a so-called \emph{cross-effect}~\cite{Baues-Pirashvili,Hartl-Vespa, HVdL}. In fact, $X\cosmash Y\cosmash Z$ is a kernel of
    \[
        \bigl(\begin{smallmatrix}
                \langle 1_X, 0\rangle\cosmash Z \\
                \langle 0, 1_Y\rangle\cosmash Z
            \end{smallmatrix}\bigr) \colon (X+Y)\cosmash Z \to (X\cosmash Z) \times (Y\cosmash Z).
    \]
\end{lemma}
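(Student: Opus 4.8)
The statement says $X\diamond Y\diamond Z$ is a kernel of the displayed map $(X+Y)\diamond Z \to (X\diamond Z)\times(Y\diamond Z)$ whose components are $\langle 1_X,0\rangle \diamond Z$ and $\langle 0,1_Y\rangle\diamond Z$. My plan is to realise both $X\diamond Y\diamond Z$ and the claimed kernel as subobjects of $(X+Y)+Z = X+Y+Z$, and show they coincide, using Lemma~\ref{Lemma Kernel and Intersection} to reduce everything to intersections of kernels of ``projection-type'' maps. First I would recall that $(X+Y)\diamond Z$ is by definition the kernel of $\Sigma_{X+Y,Z}\colon (X+Y)+Z \to (X+Y)\times Z$, so it sits inside $X+Y+Z$; one then has to check that the two component morphisms $\langle 1_X,0\rangle\diamond Z$ and $\langle 0,1_Y\rangle\diamond Z$, which a priori land in $X\diamond Z$ and $Y\diamond Z$ respectively, are the restrictions to $(X+Y)\diamond Z$ of the evident codiagonal-type maps $X+Y+Z \to X+Z$ (killing $Y$) and $X+Y+Z\to Y+Z$ (killing $X$). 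This is just naturality of $\iota_{-,Z}$ together with the functoriality of $\diamond$ in the first variable, and I would spell it out via the kernel universal property rather than by any computation.

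Next, by Lemma~\ref{Lemma Kernel and Intersection}, the claimed kernel is the intersection, inside $(X+Y)\diamond Z$, of the kernels of $\langle 1_X,0\rangle\diamond Z$ and $\langle 0,1_Y\rangle\diamond Z$; and $(X+Y)\diamond Z$ is itself $\ker \Sigma_{X+Y,Z}$, which by the same lemma is $\ker(p_{X+Y})\wedge\ker(p_Z)$ for the two components of $\Sigma_{X+Y,Z}$. So altogether the claimed kernel is an intersection of four kernels of morphisms out of $X+Y+Z$. On the other side, $X\diamond Y\diamond Z = \ker\Sigma_{X,Y,Z}$, and $\Sigma_{X,Y,Z}$ has three components $X+Y+Z\to X+Y$, $\to X+Z$, $\to Y+Z$ (the three ``kill one summand'' maps), so by Lemma~\ref{Lemma Kernel and Intersection} it is the intersection of those three kernels. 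The heart of the proof is then the purely diagrammatic identification: the map $X+Y+Z\to X+Z$ appearing in $\Sigma_{X,Y,Z}$ is precisely $\langle 1_X,0\rangle\diamond Z$ once restricted appropriately, the map to $Y+Z$ is $\langle 0,1_Y\rangle\diamond Z$, and the kernels of the remaining components ($X+Y+Z\to X+Y$ on one side, $\ker p_{X+Y}\wedge\ker p_Z$ on the other) match up because a map $X+Y+Z\to (X+Y)\times Z$ factoring through both ``kill $Z$ into $X+Y$'' and ``project to $Z$'' has the same kernel as $X+Y+Z\to X+Y$ does when we have already passed inside $\ker\Sigma_{X+Y,Z}$. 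I would organise this as: (i) show the claimed kernel, as a subobject of $X+Y+Z$, equals $\ker(X+Y+Z\to X+Y)\wedge\ker(X+Y+Z\to X+Z)\wedge\ker(X+Y+Z\to Y+Z)$; (ii) observe this is exactly $\ker\Sigma_{X,Y,Z}$ by Lemma~\ref{Lemma Kernel and Intersection}.

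**The main obstacle.** The one genuinely delicate point is step~(i): identifying the kernel of $\langle 1_X,0\rangle\diamond Z$ (a morphism defined on $(X+Y)\diamond Z$, not on all of $X+Y+Z$) with the kernel, inside $(X+Y)\diamond Z$, of the restriction of the ``kill $Y$'' map $X+Y+Z\to X+Z$. Here one must be careful that $\langle 1_X,0\rangle\diamond Z$ is the map induced on cosmash products by $\langle 1_X,0\rangle\colon X+Y\to X$, i.e. it fits in a commutative square with the inclusions $\iota_{X+Y,Z}$ and $\iota_{X,Z}$, and its composite with $\iota_{X,Z}$ into $X+Z$ is the restriction of the ``kill $Y$'' summand inclusion-induced map $X+Y+Z\to X+Z$; since $\iota_{X,Z}$ is a monomorphism, the kernel of $\langle 1_X,0\rangle\diamond Z$ agrees with the kernel of that restricted map. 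The analogous statement for $\langle 0,1_Y\rangle\diamond Z$ is symmetric. Once this bookkeeping is done, everything collapses: a subobject of $X+Y+Z$ is killed by all three ``kill one summand'' maps iff it lies in $\ker\Sigma_{X,Y,Z}$, so the claimed kernel and $X\diamond Y\diamond Z$ are the same subobject of $X+Y+Z$, hence isomorphic. I expect no computation beyond chasing these universal properties; the only risk is sign/indexing confusion in matching the three $2\times 3$-block components of $\Sigma_{X,Y,Z}$ with the two components of the map in the statement plus the two components of $\Sigma_{X+Y,Z}$.
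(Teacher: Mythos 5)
Your proposal is correct and follows essentially the same route as the paper's proof: both use Lemma~\ref{Lemma Kernel and Intersection} to realise the claimed kernel as the intersection, inside $X+Y+Z$, of the kernels of the four ``kill a summand'' maps (the two components of $\Sigma_{X+Y,Z}$ plus the two maps induced by $\langle 1_X,0\rangle$ and $\langle 0,1_Y\rangle$, identified via the monomorphisms $\iota_{X,Z}$ and $\iota_{Y,Z}$) and then identify this with $\ker\Sigma_{X,Y,Z}$. The only point where the paper is crisper is the step you phrase rather murkily (``the remaining components match up''): the fourth kernel, that of $\langle 0,0,1_Z\rangle\colon X+Y+Z\to Z$, is redundant simply because $\langle 0,0,1_Z\rangle$ factors through $\langle \iota_X,0,\iota_Z\rangle\colon X+Y+Z\to X+Z$, so its kernel already contains the kernel of the ``kill $Y$'' map and drops out of the intersection.
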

\begin{proof}
    The upper row in Figure~\ref{Figure Ternary via Cross Effect}
    \begin{figure}
        {$\xymatrix@C=4em{W \pullback \ar@{{ |>}->}[d] \ar@{{ |>}->}[r] & (X+Y)\cosmash Z \ar@{{ |>}->}[d]_-{\iota_{X+Y,Z}} \ar[r]^-{\bigl(\begin{smallmatrix}
                        \langle 1_X, 0\rangle\cosmash Z \\
                        \langle 0, 1_Y\rangle\cosmash Z
                    \end{smallmatrix}\bigr)} & (X\cosmash Z) \times (Y\cosmash Z) \ar@{{ |>}->}[d]\\
                \cdot \ar[d] \ar@{{ |>}->}[r] & X+Y+Z \ar[r]^-{\bigl(\begin{smallmatrix}
                        \iota_X & 0 & \iota_Z \\
                        0 & \iota_Y & \iota_Z
                    \end{smallmatrix}\bigr)} \ar[d]_-{\bigl(\begin{smallmatrix}
                        \iota_X & \iota_Y & 0 \\
                        0 & 0 & 1_Z
                    \end{smallmatrix}\bigr)} & (X+Z)\times (Y+Z) \ar[d] \\
                \cdot \ar@{{ |>}->}[r] & (X+Y)\times Z \ar[r] & (X\times Z)\times (Y\times Z)}$}
        \caption{Ternary cosmash $W\cong X\cosmash Y\cosmash Z$ as cross-effect}\label{Figure Ternary via Cross Effect}
    \end{figure}
    represents $W$ as a kernel of the morphism $\bigl(\begin{smallmatrix}
                \langle 1_X, 0\rangle\cosmash Z \\
                \langle 0, 1_Y\rangle\cosmash Z
            \end{smallmatrix}\bigr)$ : indeed, by Lemma~\ref{Lemma Kernel and Intersection}, the object $W$ is the intersection of the kernels of $\langle \iota_X, \iota_Y, 0\rangle$, $\langle \iota_X, 0, \iota_Z\rangle$, $\langle 0, \iota_Y, \iota_Z\rangle$ and $\langle 0,0,1_Z\rangle$, which is the same as the kernel of $\Sigma_{X,Y,Z}$ because the morphism $\langle 0,0,1_Z\rangle$ factors through $\langle \iota_X, 0, \iota_Z\rangle$.
\end{proof}

See also Definition~\ref{Definition of cross-effect} and Remark~\ref{Comparison map via cross effect} below. This viewpoint will be helpful in Section~\ref{Section Cosmash Is Tensor} when we calculate the cosmash product in the variety $\CA$, and also in Section~\ref{Section Injectivity} where it will allow us to obtain Lemma~\ref{Lemma Comparison Injective Implication}.

\subsection{Higher cosmash products}
The binary and ternary cosmash products are instances of a general definition:
\begin{definition}\cite{Smash, HVdL}\label{nary cosmash}
    For $n\geq 2$, let $X_1$, \dots, $X_n$ be objects, and consider the canonical morphism
    \begin{align*}
        \Sigma_{X_1,\dots,X_n}\colon X_1+ \dots + X_n \rightarrow \displaystyle \prod_{k=1}^n\coprod_{\substack{j=1 \\j\neq k}}^n X_j
    \end{align*}
    determined by
    \begin{align*}
        \pi_{\coprod_{\substack{j\neq k}}X_j}\circ \Sigma_{X_1,\dots,X_n} \circ \iota_{X_l} = \begin{cases}
                                                                                                  \iota_{X_l} & \text{if $l \neq k$} \\
                                                                                                  0           & \text{if $l=k$.}
                                                                                              \end{cases}
    \end{align*}
    The \defn{$n$-ary cosmash product} $X_1\diamond \dots \diamond X_n$ of $X_1$, \dots, $X_n$ is the kernel of $\Sigma_{X_1,\dots,X_n}$. We write $\iota_{X_1, \dots, X_n}\colon X_1\diamond \dots \diamond X_n\to X_1+ \dots + X_n$ for the canonical inclusion.
\end{definition}

\begin{remark}\label{symmetry of cosmash}
    Using that products and coproducts are symmetric, it is easy to see that also Definition~\ref{nary cosmash} is symmetric in the variables $X_i$ and thus, for any permutation $\sigma \in \mathfrak{S}_n$, we have that $X_1\diamond \dots \diamond X_n\cong X_{\sigma(1)}\diamond \dots \diamond X_{\sigma(n)} $. In some sense, it is precisely this fundamental feature of the cosmash product which will limit the scope of our main result to the context of (anti-)commutative algebras.
\end{remark}

Also higher-order cosmash products admit a cross-effect interpretation (as in Lemma~\ref{Lemma ternary cosmash via cross effect}). In order to be sufficiently precise for the use we make of this in Section~\ref{Section Injectivity}, we recall the following definition from~\cite{HVdL}.

\begin{definition}\label{Definition of cross-effect}
    Let $F\colon {\cC} \to {\cD}$ be a functor from a pointed category with finite sums ${\cC}$ to a pointed category with finite products and kernels ${\cD}$. The \defn{second cross-effect of~$F$} is the functor $\cre_2(F)\colon \cC\times \cC \to \cD$ defined as follows: it sends a pair of objects $(X,Y)$ in $\cC\times\cC$ to the object $\cre_2(F)(X,Y)$ which is the kernel of
    \[
        \bigl(\begin{smallmatrix}
                F(\langle 1_X,0\rangle)\\
                F(\langle0,1_Y\rangle)
            \end{smallmatrix}\bigr)
        \colon F(X +Y) \to F(X)\times F(Y)
    \]
    ---which extends to morphisms in the obvious manner.
\end{definition}

\begin{examples}\label{Examples Cross-Effects}
    We let $\cC$ be a pointed category with finite coproducts, products and kernels.
    \begin{enumerate}
        \item It is immediately clear that when $F=1_{\cC}$ we regain Definition~\ref{Definition Binary Cosmash}, so that $X \diamond Y=\cre_2(1_{\cC})(X,Y)$.
        \item Lemma~\ref{Lemma ternary cosmash via cross effect} tells us that $X\cosmash Y\cosmash Z\cong \cre_2((-)\cosmash Z)(X,Y)$; in fact, the functor $\cC\times\cC\to \cC$ which sends a couple of objects $(X,Y)$ to $X\cosmash Y\cosmash Z$ is the second cross-effect $\cre_2((-)\cosmash Z)$ of the functor $(-)\cosmash Z\colon \cC\to \cC$. By symmetry, also $X\cosmash Y\cosmash Z\cong \cre_2(X\cosmash (-))(Y,Z)$.
        \item Lemma~2.20 in the first arXiv version of~\cite{HVdL} implies that for any $X$, $Y$, $Z$ and $W$,
              \[
                  \cre_2(X\cosmash Y\cosmash (-))(Z,W)\cong X\cosmash Y\cosmash Z\cosmash W\cong \cre_2((-)\cosmash Z\cosmash W)(X,Y).
              \]
              The proof is a straightforward variation on Lemma~\ref{Lemma ternary cosmash via cross effect}; see~\cite{RVV3} for further details.
    \end{enumerate}
\end{examples}

\begin{remark}\label{Remark Functor cr}
    Note that the cross-effect operation $\cre_2$ may be seen as a functor $\FUN(\cC,\cD)\to \FUN(\cC^2,\cD)$ from $\FUN(\cC,\cD)$, the category of functors from $\cC$ to $\cD$, to $\FUN(\cC^2,\cD)$, the category of functors from $\cC^2$ to $\cD$. It follows immediately from its construction in Definition~\ref{Definition of cross-effect} that $\cre_2$ preserves natural monomorphisms.
\end{remark}

\section{The Higgins commutator}\label{Higgins Commutator}

In this section we work in a \defn{semi-abelian} category~\cite{Janelidze-Marki-Tholen}: a pointed, Barr-exact, Bourn-protomodular category with binary coproducts. From these axioms it follows that any semi-abelian category is finitely (co)complete. Non-expert readers need not to know too much about semi-abelian categories to understand the basic use we make of them here, and can just think of the context as being weaker than abelian, yet including categories such as the category $\GP$ of groups, the category $\RNG$ of rings without unit, or any variety of non-associative $\fK$-algebras (as defined in Section~\ref{Section Varieties}).

If the objects considered in Examples~\ref{Examples binary cosmash} are subobjects $K$ and $L$ of a common object $X$ in the category~$\cC$, then the cosmash product $K \diamond L$ consists of ``formal commutator words'' which lie in the coproduct $K+L$, but are not concrete elements of $X$. By taking a suitable image factorisation, this can be used to define the commutator of $K$ and $L$ as a subobject of $X$.

\begin{definition}\cite{MM-NC, HVdL}
    For an object $X$, consider subobjects $K$, $L\leq X$ respectively represented by monomorphisms $k\colon K\to X$ and $l\colon L\to X$. We define the \defn{Higgins commutator} $[K,L]$ of $K$ and $L$ as the regular image of the composite $\langle k,l\rangle \circ \iota_{K,L}$ as in the diagram
    \[
        \xymatrix{K\diamond L \ar[r]^-{\iota_{K,L}} \ar@{-{ >>}}[d] & K +L \ar[d]^{\langle k,l\rangle} \\
        {[K,L]} \ar@{{ >}->}[r] & X.}
    \]
\end{definition}

We observe that, in the case of groups, this gives the usual commutator of group theory. In the case of non-associative algebras, one can be tempted to expect that elements such as $kl-lk$ lie in $[K,L]$---which they do---but not elements such as~$kl$. However, the ``universal'' abelian subcategory of any variety of non-associative $\fK$-algebras is the one defined by $kl=0$, the variety $\ABALG$ which, as already explained in Section~\ref{Section Varieties}, is isomorphic to the category of $\fK$-vector spaces. So here, rather than characterising commutativity, it characterises abelianness.

As before, one can define the \defn{$n$-ary Higgins commutator} $[K_1, \dots, K_n]$ of a finite collection $K_i\leq X$ of subobjects of an object $X$ represented by monomorphisms $(k_i\colon K_i \rightarrow X)_{1\leq i\leq n}$ for $n\geq 2$ as the regular image
\[
    \xymatrix{K_1 \diamond \dots \diamond K_n \ar[r]^-{\iota_{K_1, \dots, K_n}} \ar@{-{ >>}}[d] & K_1 + \dots + K_n \ar[d]^-{\langle k_1, \dots, k_n\rangle}\\
    {[K_1, \dots , K_n]} \ar@{{ >}->}[r]_-{[k_1,\dots,k_n]} & X}
\]
of the composite $\langle k_1, \dots, k_n\rangle \circ \iota_{K_1, \dots, K_n}$: this is the approach taken in~\cite{HVdL} towards a categorical version of the definition in~\cite{Higgins}.

\section{Cosmash associativity}\label{Section Cosmash Associativity}

Thanks to Section~\ref{Section Definition Cosmash}, we are now able to explain what cosmash associativity is, and see why this condition does not hold for some ``classical'' algebraic objects. Section~\ref{Section Cosmash Is Tensor} deals with a specific variety of algebras which does satisfy the cosmash associativity condition.

We again let $\cC$ be a pointed category with finite sums, finite products, and kernels. Let us observe that for any $X$, $Y$, $Z\in \cC$, there exists a canonical comparison map from $X\diamond (Y\diamond Z)$ (or from $(X\diamond Y)\diamond Z$) to the ternary cosmash product $X \diamond Y\diamond Z$. This can be seen through the diagram
\[
    \xymatrix{0 \ar[r] & X\diamond (Y\diamond Z) \ar@{{ |>}->}[r]^-{\iota_{X,Y\diamond Z}} \ar@{.>}[d]_-{\Phi_{X,Y,Z}} & X + (Y\diamond Z) \ar[d]^-{1_X + \iota_{Y,Z}} \ar[r]^-{\Sigma_{X, Y\diamond Z}} & X \times (Y\diamond Z) \ar[d]^-{(\iota_X,\iota_X)\times \iota_{Y,Z}} \\
    0 \ar[r] & X \diamond Y \diamond Z \ar@{{ |>}->}[r]_-{\iota_{X,Y,Z}} & X + (Y + Z) \ar[r]_-{\Sigma_{X,Y,Z}} & (X+Y) \times (X+ Z) \times (Y+Z)}
\]
whose rows are exact sequences. Since the right hand square of the diagram commutes, by the universal property of kernels there exists a unique morphism $\Phi_{X,Y,Z}$ from $X\diamond (Y\diamond Z)$ to $X\diamond Y\diamond Z$ making the left hand square commute. Likewise, a canonical morphism $\Psi_{X,Y,Z}\colon (X\diamond Y)\diamond Z\to X\diamond Y\diamond Z$ may be constructed. Remark~\ref{Comparison map via cross effect} below provides alternate constructions for these comparison maps. As we shall explain, they need not be isomorphisms in general---whence the following definition.

\begin{definition}\cite{Smash}\label{Def Assoc Cosmash}
    A pointed category with finite sums, finite products, and kernels $\cC$ is said to be \defn{cosmash associative}, or \defn{satisfies cosmash associativity}, whenever for every three objects $X$, $Y$, $Z$ in~$\cC$ the two canonical comparison maps
    \[
        \xymatrix{& X\diamond Y\diamond Z\\
        X\diamond(Y\diamond Z) \ar[ru]^-{\Phi_{X,Y,Z}}_-\cong \ar@{.>}[rr]_-\cong && (X\diamond Y)\diamond Z \ar[lu]_-{\Psi_{X,Y,Z}}^-\cong}
    \]
    are invertible, which yields the dotted comparison isomorphism.
\end{definition}

This definition is in some sense redundant. Indeed, suppose that for any $X$, $Y$, $Z\in \cC$, the comparison map $\Phi_{X,Y,Z}\colon X\diamond(Y\diamond Z)\rightarrow X\diamond Y\diamond Z$ is an isomorphism. Since by Remark~\ref{symmetry of cosmash} we have that $X\diamond (Y\diamond Z)\cong (Y\diamond Z)\diamond X $ and $X\diamond Y \diamond Z \cong Y \diamond Z \diamond X $, also the comparison map $\Psi_{X,Y,Z}\colon(Y\diamond Z)\diamond X\rightarrow Y\diamond Z\diamond X$ is an isomorphism. This shows that in order to prove the cosmash associativity of a category $\cC$, it suffices to check the property for any one of the two canonical comparison maps. Note that we are already using this redundancy, in some sense, since we never mention the third canonical comparison map $Y\cosmash (X\cosmash Z)\to X\diamond Y\diamond Z$.

\begin{examples}\label{examples and counterexamples of cosmash asso}
    \begin{enumerate}
        \item Any additive category is cosmash associative, since all cosmash products are trivial. In particular, the varieties $\ABALG$ and $\TRIV$ are so.
        \item The variety $\CA$ of associative and commutative $\fK$-algebras is cosmash associative, since the cosmash product corresponds to the tensor product. See Section~\ref{Section Cosmash Is Tensor} for further details.
        \item In~\cite{Smash}, examples are given of (non-algebraic) categories which satisfy the dual condition: the so-called \emph{smash product} is associative.
        \item In the category $\GP$ of groups, cosmash products are not associative in general, essentially because commutators are not associative. Indeed, if they were, it would be possible to deduce for any triple of subgroups $K$, $L$, $M\leq X$ that $[[K,L],M]=[K,L,M]$ as subobjects of $X$. This does, however, contradict the fact that $[K,[L,M]]$ need not be contained in $[[K,L],M]$. We may, for instance, take $K=L=\Z$ viewed as included on the left in $X=M=\Z+\Z$. Here the latter commutator $[[\Z,\Z],\Z+\Z]$ vanishes since $\Z$ is abelian, while the former $[\Z,[\Z,\Z+\Z]]$ is highly non-trivial.
        \item A similar reasoning works to prove that cosmash products of Lie algebras need not be associative. A more direct argument will be given below (Example~\ref{Example Lie fails surjectivity}), where the result follows from an analysis of the comparison morphism occurring in Definition~\ref{Def Assoc Cosmash}.
    \end{enumerate}
\end{examples}

\begin{remark}\label{Remark Removing Brackets}
    Proposition 2.21 in~\cite{HVdL} tells us that \emph{removing brackets in a commutator enlarges the object}, which can be written as
    \begin{align*}
        [[K_1,\dots,K_i],K_{i+1}, \dots , K_n] \leq [K_1,\dots,K_n]
    \end{align*}
    for each $2\leq i\leq n-1$, where the $K_j$ are subobjects of an object $X$. This does not hold for cosmash products. In particular, the comparison maps in Definition~\ref{Def Assoc Cosmash} need not be injective in general---see Section~\ref{Section Injectivity}. In fact, they need not be surjections either, as explained in Section~\ref{Section Surjectivity}. Both conditions make sense on their own, and will be explored in some detail in what follows.
\end{remark}

\begin{remark}
    One might be tempted to define cosmash associativity as the condition that $X\diamond(Y\diamond Z)$ and $(X\diamond Y) \diamond Z$ are isomorphic objects. This idea does not seem as fruitful as the approach taken here, essentially because there exists no canonical map between these two objects. Furthermore, our methodology is compatible with the underlying so-called \emph{lax-monoidal} structure of the cosmash product, which we plan to further investigate in subsequent work~\cite{RVV3}.
\end{remark}

\begin{remark}\label{Comparison map via cross effect}
    Lemma~\ref{Lemma ternary cosmash via cross effect} gives us an alternative view on the construction of the comparison maps: for instance, $\Psi_{X,Y,Z}$ is the unique dotted factorisation in the diagram
    \[
        \xymatrix@C=4em{& (X\cosmash Y)\cosmash Z \ar@{.>}[d]_-{\Psi_{X,Y,Z}} \ar[rd]^-{\iota_{X,Y}\cosmash Z} \\
        0 \ar[r] & X\cosmash Y\cosmash Z \ar@{{ |>}->}[r] & (X+Y)\cosmash Z \ar[r]^-{\bigl(\begin{smallmatrix}
                \langle 1_X, 0\rangle\cosmash Z \\
                \langle 0, 1_Y\rangle\cosmash Z
            \end{smallmatrix}\bigr)} & (X\cosmash Z) \times (Y\cosmash Z).}
    \]
    Cosmash associativity may thus be viewed as a kind of left exactness property of the functor $(-)\cosmash Z\colon \cC\to \cC$, since it means that the morphism ${\iota_{X,Y}\cosmash Z}$ is the intersection of the kernels of $\langle 1_X, 0\rangle\cosmash Z$ and $\langle 0, 1_Y\rangle\cosmash Z$.
\end{remark}

\subsection{A linear independence result}\label{Subsection Linear Independence}
We now take the time to analyse a rather technical consequence of cosmash associativity, a key aspect of the structure of the cosmash product of three free algebras which turns out to be important in what follows. Let $\cV$ be a variety of $\fK$-algebras which satisfies no non-trivial identities of degree smaller than or equal to $2$. We assume that $A$, $B$ and $C$ are free $\cV$-algebras on a single generator written $a$, $b$ and $c$, respectively.

We start by studying the structure of the cosmash product $A\cosmash B$. We notice that, since $\cV$ has no identities of degree lower than $3$, as a vector space, the algebra $A\cosmash B$ may be written as a direct sum $\fK[\{ab,ba\}]\oplus V$ where $\fK[\{ab,ba\}]$ denotes the free vector space on the monomials $ab$ and $ba$, and all monomials in $V$ have strictly larger degree. Since the multiplication on $A\cosmash B$ has an image which is contained in $V$, the identities of $\cV$ leave the space $\fK[\{ab,ba\}]$ untouched.

Likewise, the free algebra $C$ may be written as a direct sum $\fK[\{c\}]\oplus W$ where $\fK[\{c\}]$ denotes the free vector space on the monomial $c$, and all monomials in $W$ have strictly larger degree.

Item (3) of Examples~\ref{Examples binary cosmash} essentially tells us that the coproduct $(A\cosmash B) + C$ has, for its underlying vector space, a sum of tensor products
\[
    (A\cosmash B) \oplus C \oplus \bigl((A\cosmash B) \otimes C\bigr) \oplus \bigl(C\otimes (A\cosmash B)\bigr) \oplus \bigl(C\otimes (A\cosmash B) \otimes C\bigr) \oplus \cdots
\]
quotiented by the identities of $\cV$. Now
\begin{multline*}
    \bigl((A\cosmash B) \otimes C\bigr) \oplus \bigl(C\otimes (A\cosmash B)\bigr)\\
    \cong
    \bigl((\fK[\{ab,ba\}]\oplus V) \otimes (\fK[\{c\}]\oplus W)\bigr) \oplus \bigl((\fK[\{c\}]\oplus W)\otimes (\fK[\{ab,ba\}]\oplus V)\bigr),
\end{multline*}
where we may observe that those elements of its subset
\begin{align*}
    D \coloneq & \;\bigl(\fK[\{ab,ba\}]\otimes \fK[\{c\}]\bigr)\oplus \bigl(\fK[\{c\}]\otimes\fK[\{ab,ba\}]\bigr) \\
    \cong      & \;\fK[\{(ab)c,(ba)c,c(ab),c(ba)\}]
\end{align*}
which lie in the image of the algebra multiplication arise from multiplying an element of $\fK[\{ab,ba\}]$ with an element of $\fK[\{c\}]$. Hence, the fact that $\cV$ has no equations of degree $2$ ensures that these terms are left untouched by the above-mentioned quotient. We see that the underlying vector space of $(A\cosmash B) + C$ is $D\oplus X$ where $X$ is a quotient of the direct sum of
\[
    (A\cosmash B) \oplus C \oplus \bigl(C\otimes (A\cosmash B) \otimes C\bigr) \oplus \cdots
\]
with
\begin{multline*}
    (\fK[\{ab,ba\}] \otimes W) \oplus (V\otimes \fK[\{c\}])\oplus (V\otimes W)\\ \oplus (\fK[\{c\}]\otimes V)\oplus (W\otimes \fK[\{ab,ba\}])\oplus (W\otimes V).
\end{multline*}
In particular, $D$ is a subspace of $(A\cosmash B) + C$, which proves the independence in $(A\cosmash B) + C$ of the set of polynomials $\{(ab)c,(ba)c,c(ab),c(ba)\}$. Since $(A\cosmash B) \cosmash C$ is a subobject of $(A\cosmash B) + C$, this gives us:

\begin{lemma}\label{Lemma Independence}
    Let $\cV$ be a variety of $\fK$-algebras which satisfies no non-trivial identities of degree smaller than or equal to $2$. Let $A$, $B$ and $C$ be free algebras on a single generator written $a$, $b$ and $c$, respectively. Then all the ``degree $3$ monomials'' in $(A\cosmash B) \cosmash C$ are linearly independent: if
    \[
        \lambda_1(ab)c+\lambda_2(ba)c+\lambda_3c(ab)+\lambda_4c(ba)=0
    \]
    in $(A\diamond B)\diamond C$, then $\lambda_i=0$ for all $1\leq i\leq 4$.\noproof
\end{lemma}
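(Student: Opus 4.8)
The plan is to deduce the statement from the stronger assertion that the four elements $(ab)c$, $(ba)c$, $c(ab)$, $c(ba)$ are already linearly independent in the larger algebra $(A\diamond B)+C$. This reduction is harmless: the canonical inclusion $\iota_{A\diamond B, C}\colon (A\diamond B)\diamond C\to (A\diamond B)+C$ is a kernel, hence a monomorphism, hence injective on underlying vector spaces; and each of the four ``degree $3$ monomials'' does lie in the subobject $(A\diamond B)\diamond C$, since as an element of the coproduct it is the product of a factor in $A\diamond B$ with a factor in $C$, and such a product is sent to $0$ by $\Sigma_{A\diamond B, C}$. So a linear dependence among them in $(A\diamond B)\diamond C$ would force the same dependence in $(A\diamond B)+C$.

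To analyse $(A\diamond B)+C$, I would first record the vector space structure of its two ``building blocks''. Since $A+B$ is the free $\cV$-algebra on $\{a,b\}$ and $\cV$ satisfies no non-trivial identity of degree $\leq 2$, the part of $A+B$ spanned by monomials of degree $\leq 2$ is untouched by the defining relations; consequently $A\diamond B$, as a vector space, decomposes as $\fK[\{ab,ba\}]\oplus V$ with $V$ spanned by monomials of degree $\geq 3$, and — crucially — the multiplication of the algebra $A\diamond B$ has its image inside $V$. Likewise $C\cong \fK[\{c\}]\oplus W$ with $W$ spanned by monomials of degree $\geq 2$. Then I would invoke the description of coproducts in a variety from Examples~\ref{Examples binary cosmash}(3): the underlying vector space of $(A\diamond B)+C$ is the quotient, by the identities of $\cV$, of the alternating sum of tensor powers
\[
(A\diamond B)\oplus C\oplus\bigl((A\diamond B)\otimes C\bigr)\oplus\bigl(C\otimes (A\diamond B)\bigr)\oplus\bigl(C\otimes(A\diamond B)\otimes C\bigr)\oplus\cdots .
\]

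Substituting the two decompositions into the summand $\bigl((A\diamond B)\otimes C\bigr)\oplus\bigl(C\otimes(A\diamond B)\bigr)$ isolates the subspace
\[
D=\bigl(\fK[\{ab,ba\}]\otimes\fK[\{c\}]\bigr)\oplus\bigl(\fK[\{c\}]\otimes\fK[\{ab,ba\}]\bigr)\cong \fK[\{(ab)c,(ba)c,c(ab),c(ba)\}],
\]
which is exactly the span of our four monomials. The heart of the argument is then to check that the quotient map is injective on $D$, i.e.\ that the identities of $\cV$ leave $D$ untouched: the elements of $D$ arise by multiplying a degree-$2$ element of the kind $ab$ or $ba$ with the degree-$1$ element $c$, and one must see that no substitution instance of an identity of $\cV$ — nor any element of the ideal such instances generate — has a nonzero component in $D$. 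Here one uses both that the multiplication of $A\diamond B$ lands in $V$ (so higher-degree material internal to $A\diamond B$ cannot leak into the degree-$3$ piece $D$) and that any relation among the four monomials produced by an identity would, upon treating $ab$, $ba$ and $c$ as fresh indeterminates, amount to a non-trivial identity of degree at most $2$ for $\cV$ — contradicting the hypothesis. Once $D$ is a genuine direct summand of $(A\diamond B)+C$, its basis monomials are linearly independent there, and the lemma follows by the reduction above.

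The main obstacle is precisely this last verification: carefully tracking which terms of the expansion $(A\diamond B)\oplus C\oplus\bigl((A\diamond B)\otimes C\bigr)\oplus\cdots$ an identity of $\cV$ can possibly mix with $D$, and confirming that the absence of degree-$\leq 2$ identities is exactly what forbids a non-trivial relation landing inside $D$. Concretely, all other ``degree $3$'' contributions to the coproduct arise from $\fK[\{ab,ba\}]\otimes W$, $V\otimes\fK[\{c\}]$, $\fK[\{c\}]\otimes V$, $W\otimes\fK[\{ab,ba\}]$, $C$, and so on; these either have strictly larger internal degree or involve $V$ or $W$, and a degree count shows the quotient cannot relate any of them — nor any substitution instance of a higher-degree identity — to an element of $D$. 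This bookkeeping is elementary but a little delicate; everything else in the proof is formal.
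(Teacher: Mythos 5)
Your proposal is correct and follows essentially the same route as the paper: reduce to linear independence in the coproduct $(A\diamond B)+C$ via the kernel inclusion, decompose $A\diamond B\cong\fK[\{ab,ba\}]\oplus V$ and $C\cong\fK[\{c\}]\oplus W$, and use the tensor-product description of the coproduct to exhibit $D\cong\fK[\{(ab)c,(ba)c,c(ab),c(ba)\}]$ as a direct summand untouched by the identities of $\cV$, precisely because any relation landing in $D$ would force a non-trivial identity of degree at most $2$. This is exactly the argument of Subsection~\ref{Subsection Linear Independence}.
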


\begin{remark}\label{Remark Independence Variations}
    The above procedure is easily modified to yield strengthenings of, and variations on, Lemma~\ref{Lemma Independence}. For instance, the set of monomials
    \[
        \{(ab)\omega(c),(ba)\omega(c),\omega(c)(ab),\omega(c)(ba)\}
    \]
    where $\omega(c)$ is any non-zero element of $\fK[\{c\}]$ is still linearly independent in the algebra ${(A\cosmash B)\cosmash C}$. As a consequence, if $E=\fK[\{e_1,\dots,e_n\}]$ is free on the set of generators $\{e_1,\dots,e_n\}$, then the surjective algebra morphism $f\colon E\to C\colon e_i\mapsto c$ induces the algebra morphism
    \[
        (1_A\cosmash 1_B) \cosmash f\colon (A\cosmash B) \cosmash E\to (A\cosmash B) \cosmash C.
    \]
    If now
    \[
        \lambda_1(ab)\phi(e_1,\dots,e_n)+\lambda_2(ba)\phi(e_1,\dots,e_n)+\lambda_3\phi(e_1,\dots,e_n)(ab)+\lambda_4\phi(e_1,\dots,e_n)(ba)
    \]
    is zero in $(A\cosmash B) \cosmash E$, then
    \[
        \lambda_1(ab)\phi(c,\dots,c)+\lambda_2(ba)\phi(c,\dots,c)+\lambda_3\phi(c,\dots,c)(ab)+\lambda_4\phi(c,\dots,c)(ba)=0
    \]
    in $(A\cosmash B) \cosmash C$, so that $\lambda_i=0$ for all $1\leq i\leq 4$. In particular, the monomials $(ab)\phi(e_1,\dots,e_n)$, $(ba)\phi(e_1,\dots,e_n)$, $\phi(e_1,\dots,e_n)(ab)$ and $\phi(e_1,\dots,e_n)(ba)$ are linearly independent in $(A\cosmash B) \cosmash E$, for any non-zero element $\phi(e_1,\dots,e_n)$ of $E$.

    This may be further generalised by considering arbitrary non-zero elements of $\fK[\{ab,ba\}]$ instead of just $ab$ or $ba$.
\end{remark}

\section{Cosmash and tensor}\label{Section Cosmash Is Tensor}

The aim of this short section is to recall from~\cite{Smash} that in the operadic variety $\CA$, the cosmash product is the tensor product over $\fK$. It follows that here, cosmash products are associative. We prove the result in full detail, so that it may serve as a basis for similar results in Section~\ref{Section Non-Operadic Case}.

In the variety $\CA$, the product of two algebras $A$ and $B$ is just the cartesian product $A\oplus B$ of the underlying vector spaces, equipped with the pointwise multiplication. It is also well known that the coproduct of $A$ and $B$ has
\[
    A \oplus (A \otimes B) \oplus B
\]
for its underlying vector space, equipped with the multiplication determined by
\begin{multline*}
    (a,a'\otimes b',b)(c,c'\otimes d',d)\\
    = (ac, ac'\otimes d'+a\otimes d+a'c\otimes b'+a'c'\otimes b'd'+a'\otimes b'd+c\otimes b+c'\otimes bd',bd)\,.
\end{multline*}
Indeed, given any two morphisms $f\colon A\to C$ and $g\colon B\to C$, the map
\[
    A \oplus (A \otimes B) \oplus B\to C\colon \Bigl(a,\sum_i a_i\otimes b_i,b\Bigr)\mapsto f(a)+\sum_i f(a_i)g(b_i)+g(b)
\]
is the unique algebra morphism which restricts to $f$ and $g$ via the inclusion of $A$ on the left and of $B$ on the right. Using the commutativity of the algebras in $\CA$, it is easily seen that this agrees with the description in item (3) of Examples~\ref{Examples binary cosmash}. In particular, the map sending $a\in A$ to $(a,0,0)$, $b\in B$ to $(0,0,b)$ and $ab$ to $(0, a\otimes b,0)$ determines an algebra isomorphism from the coproduct as described in Examples~\ref{Examples binary cosmash} to $A \oplus (A \otimes B) \oplus B$ with the given multiplication.

\begin{proposition}\label{Proposition Cosmash is Tensor}
    In the variety $\CA$, $A \diamond B \cong A \otimes B$ via the isomorphism
    \[
        A \otimes B \to A \diamond B\colon a\otimes b \mapsto ab\,.
    \]
\end{proposition}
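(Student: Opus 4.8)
The plan is to make everything explicit using the description of the coproduct $A+B$ recalled just above the statement. First I would identify the comparison morphism $\Sigma_{A,B}\colon A+B\to A\times B$ in terms of the decomposition $A+B\cong A\oplus(A\otimes B)\oplus B$. By definition $\Sigma_{A,B}=\bigl(\begin{smallmatrix}1_A&0\\0&1_B\end{smallmatrix}\bigr)$ is the unique algebra morphism restricting to $\langle 1_A,0\rangle\colon A\to A\times B$ and $\langle 0,1_B\rangle\colon B\to A\times B$ along the two coproduct inclusions. Plugging $C=A\times B$, $f=\langle 1_A,0\rangle$ and $g=\langle 0,1_B\rangle$ into the universal formula displayed above, $\Sigma_{A,B}$ sends $\bigl(a,\sum_i a_i\otimes b_i,b\bigr)$ to $(a,0)+\sum_i(a_i,0)(0,b_i)+(0,b)$. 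Since the multiplication in the \emph{product} $A\times B$ is componentwise, each cross term $(a_i,0)(0,b_i)$ equals $(0,0)$, so $\Sigma_{A,B}\bigl(a,\sum_i a_i\otimes b_i,b\bigr)=(a,b)$.

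Next I would read off the kernel: $\Sigma_{A,B}(a,\xi,b)=(0,0)$ forces $a=0$ and $b=0$, while $\xi\in A\otimes B$ remains arbitrary. Hence, as a vector space, $A\diamond B=\Ker\Sigma_{A,B}=\{(0,\xi,0):\xi\in A\otimes B\}$, and the assignment $a\otimes b\mapsto(0,a\otimes b,0)$ is a linear isomorphism $A\otimes B\to A\diamond B$. Under the identification (also recalled above) between this coordinatised coproduct and the ``polynomial'' description of $A+B$ from item~(3) of Examples~\ref{Examples binary cosmash}, the element $(0,a\otimes b,0)$ is precisely the monomial $ab$; so the isomorphism has the stated form $a\otimes b\mapsto ab$.

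It then remains to check that this linear isomorphism is multiplicative, i.e.\ an isomorphism of algebras. For this I would compute, inside $A+B$, the product of two elements of the form $(0,a\otimes b,0)$ and $(0,c\otimes d,0)$ using the multiplication formula displayed above, with all ``outer'' components equal to zero: every summand of the middle component other than the one of type $a'c'\otimes b'd'$ then contains a zero factor, so the product is $(0,ac\otimes bd,0)$. This is exactly the image of $(a\otimes b)(c\otimes d)=ac\otimes bd$ as computed in the usual commutative associative tensor-product algebra $A\otimes B$, which indeed belongs to $\CA$. By bilinearity this pins down the multiplication on all of $A\diamond B$, so $a\otimes b\mapsto ab$ is an algebra isomorphism $A\otimes B\to A\diamond B$, as claimed.

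The argument is entirely computational and I do not expect a genuine obstacle; the only point requiring care is keeping the two presentations of $A+B$ (the coordinate one $A\oplus(A\otimes B)\oplus B$ and the polynomial one) aligned, and noting that it is precisely the componentwise multiplication of the product $A\times B$ that annihilates the cross terms in the computation of $\Sigma_{A,B}$.
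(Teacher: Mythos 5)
Your argument is correct and follows essentially the same route as the paper: identify $\Sigma_{A,B}$ on the explicit coproduct $A\oplus(A\otimes B)\oplus B$, observe that it sends $(a,a'\otimes b',b)$ to $(a,b)$ because the cross terms die in the componentwise product, and conclude that the kernel is $A\otimes B$ with the stated isomorphism $a\otimes b\mapsto ab$. Your extra verification of multiplicativity only makes explicit what the paper leaves as an immediate consequence of the coproduct description.
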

\begin{proof}
    It suffices to notice that the comparison morphism
    \[
        \langle (1_A,0),(0,1_B)\rangle\colon A \oplus (A \otimes B) \oplus B\to A\oplus B
    \]
    sends an element $(a,a'\otimes b',b)$ to $(a,0)+ (a',0)(0,b')+(0,b)=(a,b)$, so that its kernel is the tensor product $A \otimes B$. The form of the isomorphism in terms of description in item (3) of Examples~\ref{Examples binary cosmash} follows right away from the description of the coproduct recalled above.
\end{proof}

\begin{proposition}\label{Proposition Ternary Cosmash is Tensor}
    In the variety $\CA$ we have $A \diamond B\diamond C \cong (A \otimes B)\otimes C$ for all algebras $A$, $B$, $C$.
\end{proposition}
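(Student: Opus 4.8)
The plan is to compute $A\cosmash B\cosmash C$ directly as a kernel, using the cross-effect description of the ternary cosmash product together with Proposition~\ref{Proposition Cosmash is Tensor}. By Lemma~\ref{Lemma ternary cosmash via cross effect} (equivalently, Examples~\ref{Examples Cross-Effects}(2)), the object $A\cosmash B\cosmash C$ is a kernel of the comparison morphism
\[
    \bigl(\begin{smallmatrix}
            \langle 1_A, 0\rangle\cosmash C \\
            \langle 0, 1_B\rangle\cosmash C
        \end{smallmatrix}\bigr) \colon (A+B)\cosmash C \to (A\cosmash C) \times (B\cosmash C).
\]
First I would invoke Proposition~\ref{Proposition Cosmash is Tensor} to rewrite each of the three binary cosmash products appearing here as a tensor product over $\fK$: $(A+B)\cosmash C\cong (A+B)\otimes C$, $A\cosmash C\cong A\otimes C$ and $B\cosmash C\cong B\otimes C$.

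The key step is to check that, under these identifications, the comparison morphism above becomes $\Sigma_{A,B}\otimes 1_C$. This uses the naturality of the isomorphism $a\otimes b\mapsto ab$ of Proposition~\ref{Proposition Cosmash is Tensor} in its first variable, applied to the morphisms $\langle 1_A, 0\rangle\colon A+B\to A$ and $\langle 0, 1_B\rangle\colon A+B\to B$, so that $\langle 1_A,0\rangle\cosmash C$ and $\langle 0,1_B\rangle\cosmash C$ correspond to $\langle 1_A,0\rangle\otimes 1_C$ and $\langle 0,1_B\rangle\otimes 1_C$; recalling that in $\CA$ finite products are direct sums of the underlying vector spaces, the pair of these two morphisms is exactly $\Sigma_{A,B}\otimes 1_C\colon (A+B)\otimes C\to (A\times B)\otimes C$. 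Now, since tensoring with $C$ over a field is exact on vector spaces and the forgetful functor $\CA\to\VECT$ creates kernels, the kernel of $\Sigma_{A,B}\otimes 1_C$ is $\ker(\Sigma_{A,B})\otimes C=(A\cosmash B)\otimes C$; one more application of Proposition~\ref{Proposition Cosmash is Tensor} identifies this with $(A\otimes B)\otimes C$, as desired.

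I do not expect a genuine obstacle here; the argument is essentially bookkeeping. The one point that needs care is the naturality claim for Proposition~\ref{Proposition Cosmash is Tensor}, i.e.\ that for a morphism $g$ in $\CA$ the square relating $g\otimes 1_C$ and $g\cosmash C$ commutes (this follows from $g+1_C$ restricting to $g\cosmash C$ on the cosmash subalgebra and sending $ab$ to $g(a)b$), together with the interchange of the kernel with $-\otimes_\fK C$, which is where exactness over a \emph{field} (rather than merely a commutative ring) is used. An alternative, more hands-on route would bypass the cross-effect altogether: using that in $\CA$ the coproduct $A+B+C$ has underlying vector space $A\oplus B\oplus C\oplus(A\otimes B)\oplus(A\otimes C)\oplus(B\otimes C)\oplus(A\otimes B\otimes C)$ and that the three components of $\Sigma_{A,B,C}$ are the projections onto $A+B$, $A+C$ and $B+C$ respectively, one reads off directly that the kernel is the summand $A\otimes B\otimes C$. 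Either way, the resulting isomorphism is an isomorphism of algebras for the multiplications inherited from the respective coproducts, and it is compatible with the comparison maps $\Phi_{A,B,C}$ and $\Psi_{A,B,C}$, so the same computation shows $\CA$ to be cosmash associative.
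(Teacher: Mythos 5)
Your proof is correct and follows essentially the same route as the paper: both pass through Lemma~\ref{Lemma ternary cosmash via cross effect} and Proposition~\ref{Proposition Cosmash is Tensor} and then compute the kernel of the resulting linear map, the only difference being that you package the kernel computation via naturality and exactness of $-\otimes_{\fK}C$ (identifying the map as $\Sigma_{A,B}\otimes 1_C$), whereas the paper reads the kernel off directly from the explicit decomposition of $(A+B)\otimes C$ as $(A\otimes C)\oplus((A\otimes B)\otimes C)\oplus(B\otimes C)$.
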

\begin{proof}
    Via Proposition~\ref{Proposition Cosmash is Tensor}, the exact sequence in Lemma~\ref{Lemma ternary cosmash via cross effect} becomes
    \[
        \xymatrix{0 \ar[r] & A\cosmash B\cosmash C \ar@{{ |>}->}[r] & (A+B)\otimes C \ar[r] & (A\otimes C) \times (B\otimes C)}
    \]
    which says that the vector space underlying $A\cosmash B\cosmash C$ is the kernel of the linear map
    \[
        (A \oplus (A \otimes B) \oplus B)\otimes C\cong (A \otimes C)\oplus ((A \otimes B)\otimes C) \oplus (B\otimes C) \to (A\otimes C) \oplus (B\otimes C)\,,
    \]
    which is necessarily isomorphic to $(A \otimes B)\otimes C$. It is clear that the algebra structures agree with this.
\end{proof}

\begin{corollary}\label{Corollary Associative Cosmash}
    The variety $\CA$ has an associative cosmash product, which is part of a symmetric monoidal structure whose unit is the field $\fK$.\noproof
\end{corollary}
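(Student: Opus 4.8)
The plan is to transport the standard symmetric monoidal structure $(\otimes_{\fK},\fK)$ from $\VECT$ up to $\CA$, and then across the isomorphism of Proposition~\ref{Proposition Cosmash is Tensor}. First I would note that if $A$ and $B$ lie in $\CA$, then the tensor product $A\otimes B$ of the underlying vector spaces carries a commutative associative multiplication determined by $(a\otimes b)(a'\otimes b')=aa'\otimes bb'$, and that this construction is functorial, so that $\otimes$ becomes a bifunctor $\CA\times\CA\to\CA$ lying over the usual tensor product on $\VECT$ along the forgetful functor $U\colon\CA\to\VECT$ (which is faithful, since an algebra morphism is in particular a linear map).

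Next I would check that the usual associativity, unit and symmetry constraints of $(\VECT,\otimes_{\fK},\fK)$ are algebra morphisms for these multiplications: the associator $(A\otimes B)\otimes C\to A\otimes(B\otimes C)$ merely re-brackets elementary tensors and is visibly multiplicative; the braiding $A\otimes B\to B\otimes A$ likewise; and the unitors $A\otimes\fK\to A$ and $\fK\otimes A\to A$ are multiplicative because $\fK$, regarded as an algebra over itself, multiplies by scalars (so that $\fK$ — the one-dimensional algebra, which is \emph{not} the zero object of $\CA$ — serves as the monoidal unit). Since $U$ is faithful, every coherence diagram (pentagon, triangle, hexagons) commutes in $\CA$ because its image under $U$ commutes in $\VECT$. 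Hence $(\CA,\otimes,\fK)$ is a symmetric monoidal category.

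Finally, by Proposition~\ref{Proposition Cosmash is Tensor} the map $A\otimes B\to A\diamond B$, $a\otimes b\mapsto ab$, is an isomorphism, and it is natural in both variables: both $\otimes$ and $\diamond$ act on a pair of morphisms $(f,g)$ by $a\otimes b\mapsto f(a)\otimes g(b)$, resp.\ $ab\mapsto f(a)g(b)$ (recall the description of morphisms out of a coproduct used for item~(3) of Examples~\ref{Examples binary cosmash}). Transporting the structure of the previous paragraph along this natural isomorphism of bifunctors yields a symmetric monoidal structure $(\CA,\diamond,\fK)$; in particular its associativity constraint is invertible, so the cosmash product is associative in the sense of Definition~\ref{Def Assoc Cosmash}. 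The only point that needs a moment's care is to confirm that this transported associator is indeed $\Phi_{X,Y,Z}^{-1}\circ\Psi_{X,Y,Z}$ with both comparison maps invertible; but this is exactly what Proposition~\ref{Proposition Ternary Cosmash is Tensor} provides, since under the identifications of its proof $\Psi_{X,Y,Z}$ becomes an identity inclusion and $\Phi_{X,Y,Z}$ becomes the vector-space associator. There is thus no genuine obstacle: everything reduces to the corresponding well-known facts for $\VECT$.
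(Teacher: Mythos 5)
Your argument is correct and is essentially the paper's intended justification: the corollary carries no proof precisely because it follows from Propositions~\ref{Proposition Cosmash is Tensor} and~\ref{Proposition Ternary Cosmash is Tensor} exactly as you use them, with the symmetric monoidal structure of the tensor product (unit $\fK$) transported along the natural isomorphism between $\otimes$ and $\diamond$. Your additional care in identifying the transported associator with $\Phi_{X,Y,Z}^{-1}\circ\Psi_{X,Y,Z}$ (via Remark~\ref{Comparison map via cross effect} together with the redundancy observed after Definition~\ref{Def Assoc Cosmash}) merely makes explicit what the paper leaves implicit.
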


\begin{remark}
    This works as well when $\fK$ is a commutative ring with unit; we regain, for instance, the example of commutative rings (= commutative associative $\Z$-algebras) considered in~\cite{Smash}.
\end{remark}

\section{Surjectivity of the comparison maps}\label{Section Surjectivity}

Let $\cV$ be a homogeneous variety of algebras over a field $\fK$. Let $A$, $B$ and $C$ be free algebras in $\cV$, respectively generated by elements $a$, $b$ and $c$. Suppose the map $\Phi_{A,B,C} \colon A \diamond (B \diamond C) \to A \diamond B \diamond C$ is surjective.
Then the polynomial $(ab)c$ lies in the image of $\Phi_{A,B,C}$. So some polynomial
\[
    \LLL_1 a(bc) + \LLL_2 a(cb) + \LLL_3 (bc)a + \LLL_4 (cb)a + t(a,b,c)
\]
exists in $A \diamond (B \diamond C)$ which is mapped to $(ab)c$ by $\Phi_{A,B,C}$. Here $\LLL_1$, $\ldots$, $\LLL_4 \in \fK$ and $t(a,b,c)$ is a polynomial in $a$, $b$ and $c$ which does not contain any multilinear monomial of degree $3$. This implies that in $A+B+C$, we have
\[
    (ab)c = \LLL_1 a(bc) + \LLL_2 a(cb) + \LLL_3 (bc)a + \LLL_4 (cb)a + t(a,b,c)\,.
\]
Since $A+B+C$ is free, this expression is a linear combination of identities of $\cV$. By the homogeneity of those identities, necessarily also
\[
    (ab)c = \LLL_1 a(bc) + \LLL_2 a(cb) + \LLL_3 (bc)a + \LLL_4 (cb)a
\]
is an identity of $\cV$. We can think of this as an identity which ``pulls the left factor $a$ out of the parentheses'' in the expression $(ab)c$. Doing the same for the polynomial $(ba)c$ gives us the identity
\[
    (ba)c = \LLR_1 a(bc) + \LLR_2 a(cb) + \LLR_3 (bc)a + \LLR_4 (cb)a \,,
\]
which pulls the right factor out of the parentheses. The polynomials $c(ab)$ and $c(ba)$ net us the identities
\[
    c(ab) = \LRL_1 a(bc) + \LRL_2 a(cb) + \LRL_3 (bc)a + \LRL_4 (cb)a
\]
and
\[
    c(ba) = \LRR_1 a(bc) + \LRR_2 a(cb) + \LRR_3 (bc)a + \LRR_4 (cb)a \,,
\]
which allows to pull the factors out when the parentheses are located on the right.

\begin{proposition}\label{Proposition Lambda Rules}
    A homogeneous variety of $\fK$-algebras $\cV$ is such that for all algebras $X$, $Y$, $Z$ in $\cV$ the comparison map
    \[
        \Phi_{X,Y,Z} \colon X \diamond (Y \diamond Z) \to X \diamond Y \diamond Z
    \]
    is surjective, if and only if elements $\LLL_i$, $\LLR_i$, $\LRL_i$ and $\LRR_i$, $i\in \{1,2,3,4\}$ exist in $\fK$ for which
    \begin{equation}\label{Eq Lambda Rules}
        \begin{cases}
            (xy)z = \LLL_1 x(yz) + \LLL_2 x(zy) + \LLL_3 (yz)x + \LLL_4 (zy)x \\
            (yx)z = \LLR_1 x(yz) + \LLR_2 x(zy) + \LLR_3 (yz)x + \LLR_4 (zy)x \\
            z(xy) = \LRL_1 x(yz) + \LRL_2 x(zy) + \LRL_3 (yz)x + \LRL_4 (zy)x \\
            z(yx) = \LRR_1 x(yz) + \LRR_2 x(zy) + \LRR_3 (yz)x + \LRR_4 (zy)x
        \end{cases}
    \end{equation}
    are identities of $\cV$.\noproof
\end{proposition}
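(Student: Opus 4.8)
The plan is to turn the informal computation preceding the statement into a clean two-way argument. For the ``only if'' direction, I would simply recapitulate what has already been sketched: assume surjectivity of all the $\Phi_{X,Y,Z}$, apply it to free algebras $A$, $B$, $C$ on single generators $a$, $b$, $c$. Since $A\diamond(B\diamond C)$ as a vector space is spanned by monomials in $a$, $b$, $c$ each containing $a$ and each containing at least one of $b$, $c$ (by item (3) of Examples~\ref{Examples binary cosmash}), and the only multilinear degree-$3$ such monomials are $a(bc)$, $a(cb)$, $(bc)a$, $(cb)a$, any preimage of $(ab)c$ under $\Phi_{A,B,C}$ has the form $\LLL_1 a(bc)+\LLL_2 a(cb)+\LLL_3(bc)a+\LLL_4(cb)a+t(a,b,c)$ with $t$ containing no multilinear degree-$3$ monomial. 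Pushing this equality into the free algebra $A+B+C$ and isolating the homogeneous component of type $(1,1,1)$ — legitimate because $\cV$ is homogeneous, so its identities are $\fK$-linear combinations of homogeneous ones — gives the first identity in \eqref{Eq Lambda Rules}. Repeating with the preimages of $(ba)c$, $c(ab)$, $c(ba)$ yields the other three. (One should note $\Phi_{A,B,C}$ does map each of these four monomials into $A\diamond B\diamond C$, so asking for preimages makes sense.)

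For the ``if'' direction, assume the four identities of \eqref{Eq Lambda Rules} hold in $\cV$, and fix arbitrary $X$, $Y$, $Z\in\cV$. Since $\Phi_{X,Y,Z}$ is a kernel-induced map between subobjects of coproducts and cosmash products commute with filtered colimits and preserve regular epis in the relevant variables, it suffices to check surjectivity on generators, i.e. it is enough to treat the case $X=A$, $Y=B$, $Z=C$ free on single generators and show $\Phi_{A,B,C}$ is surjective there; the general case then follows by naturality from a jointly epic family of maps out of free algebras. For the free case, $A\diamond B\diamond C$ is spanned as a vector space by monomials in $a$, $b$, $c$ each containing all three generators; I would argue by induction on degree that every such monomial lies in the image of $\Phi_{A,B,C}$. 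The base case, degree $3$, is exactly the content of the four identities: each of $(ab)c$, $(ba)c$, $c(ab)$, $c(ba)$ is rewritten, \emph{inside $A+B+C$ and hence inside $A\diamond B\diamond C$}, as a combination of $a(bc)$, $a(cb)$, $(bc)a$, $(cb)a$, and those four visibly lie in the image of $\Phi_{A,B,C}$ (they are $\Phi_{A,B,C}$ applied to $a\diamond(bc)$, etc.). For higher degree, a monomial $m$ of degree $>3$ containing all three generators decomposes as $m=pq$ with $p$, $q$ proper submonomials; at least one of $p$, $q$ lies in a binary cosmash product $A'\diamond B'$ with the other a polynomial built from the remaining generators, and one exhibits $m$ as a value of a product involving elements already known to be in the image, using naturality of $\Phi$ in the three variables and the fact that $\Phi_{X,Y,Z}$ is an algebra morphism.

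The main obstacle I anticipate is the ``if'' direction, and specifically making the reduction-to-free-generators step fully rigorous: one must be careful that surjectivity of $\Phi$ on free one-generator algebras genuinely propagates to arbitrary $X$, $Y$, $Z$. The clean way is the observation that for fixed $Z$, the assignment $(X,Y)\mapsto X\diamond Y\diamond Z$ is the second cross-effect $\cre_2((-)\diamond Z)$ (Lemma~\ref{Lemma ternary cosmash via cross effect} and Examples~\ref{Examples Cross-Effects}), and $\cre_2$ of a functor preserving regular epis again preserves them; combined with the fact that $(-)\diamond(-)$ preserves regular epis in each variable in a semi-abelian variety, this lets one cover $X\diamond Y\diamond Z$ by regular images of $A\diamond B\diamond C$'s and transport surjectivity along the naturality squares for $\Phi$. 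I would also double-check that ``homogeneous'' is used in exactly the right place in the ``only if'' direction — it is what licenses discarding $t(a,b,c)$ and keeping only the multilinear component — and that the four $\LLL$-type identities, once they hold for the distinguished free generators, automatically hold for all substitutions, which is just the definition of an identity of a variety.
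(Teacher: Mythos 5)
Your ``only if'' half is exactly the paper's argument (free one-generator algebras, a preimage of $(ab)c$ of the form $\LLL_1 a(bc)+\LLL_2 a(cb)+\LLL_3 (bc)a+\LLL_4 (cb)a+t$, passage to the free algebra $A+B+C$, homogeneity to isolate the multilinear degree-three component), and it is fine; one small slip is that $A\diamond(B\diamond C)$ is spanned by monomials in $a$ and in elements of $B\diamond C$ (so each monomial contains both a $b$ \emph{and} a $c$), not merely ``at least one of $b$, $c$'' --- you need the stronger description to conclude that the only available multilinear degree-three monomials are $a(bc)$, $a(cb)$, $(bc)a$, $(cb)a$.

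The genuine gap is in the ``if'' direction, which the paper leaves implicit but which your sketch does not establish. First, the reduction to free algebras on a \emph{single} generator is unjustified: a jointly epimorphic family of maps $A\to X$ from one-generator free algebras does not induce a jointly epimorphic (or even jointly generating, over a small field) family $A\diamond B\diamond C\to X\diamond Y\diamond Z$, because the cosmash functors are far from additive; preservation of regular epimorphisms only lets you replace $X$, $Y$, $Z$ by single free presentations, i.e.\ by free algebras on arbitrary sets, so the detour buys nothing --- and none of it is needed, since the identities \eqref{Eq Lambda Rules} hold in every algebra of $\cV$, in particular in $X+Y+Z$ with arbitrary elements substituted for $x$, $y$, $z$. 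Second, and more seriously, your inductive step for monomials of degree $>3$ does not work as described: a monomial such as $(a(bc))b$ or $((ab)c)a$ admits no decomposition $m=pq$ with one factor in a binary cosmash of two of the algebras and the other ``built from the remaining generators'', and even when such a decomposition exists, naturality of $\Phi$ together with the fact that $\Phi_{X,Y,Z}$ is an algebra morphism cannot yield membership of $pq$ in the image when one factor (say a bare generator $b$) lies in no cosmash product at all. What actually drives the converse is a renewed application of the identities themselves: every monomial of $X\diamond Y\diamond Z$ is a bracketed product of its factors from $X$ and its maximal $X$-free sub-blocks, and the four identities, applied with these blocks substituted for $y$ and $z$, let you move a chosen $X$-factor one level towards the root while merging the adjacent blocks; a double induction (on the number of $X$-factors, and on the depth of the chosen one) then rewrites the monomial as a linear combination of bracketed products of elements of $X$ and elements of $Y\diamond Z$, i.e.\ as an element of the image of $\Phi_{X,Y,Z}$, directly for arbitrary $X$, $Y$, $Z$. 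Without this rewriting step (and a termination argument for it), surjectivity is only verified for the four degree-three patterns, which is not enough.
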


This immediately excludes certain homogeneous varieties:

\begin{examples}\label{Example Not Surjective}
    $\ALG$ does not satisfy the identities~\eqref{Eq Lambda Rules}, since it does not satisfy any non-trivial equations; more generally, any homogeneous variety of algebras that does not have any degree three identities cannot have surjective comparison maps $\Phi_{X,Y,Z}$. For instance, the variety of non-associative commutative algebras---the subvariety of $\ALG$ determined by the identity $xy-yx=0$, called \defn{commutative-magmatic} algebras in~\cite{Zinbiel}---is such.
\end{examples}

\begin{example}\label{Example NCA}
    The variety of (non-commutative) associative algebras does not satisfy the second identity in \eqref{Eq Lambda Rules}, so it cannot be cosmash associative---even though it satisfies a degree three equation.
\end{example}

Assuming (anti-)commutativity, what should we add for the $\Phi_{X,Y,Z}$ to become surjective? We may deduce from the first equation that some $\lambda\in \fK$ exists such that
\begin{align*}
    (xy)z = \lambda x(yz).
\end{align*}
In the commutative case, this implies that
\begin{align*}
    x(yz) = (yz)x = \lambda y(zx) = \lambda y(xz) = (yx)z = (xy)z = \lambda x(yz) \,,
\end{align*}
whence $\lambda=1$, meaning the algebras of our variety are associative. The same calculation as above in the anti-commutative case nets $x(yz) = - \lambda x(yz)$, which implies that $\lambda=-1$, meaning the algebras of our variety are anti-associative.

\begin{lemma}\label{Lemma Comm implies Assoc}
    Let $\cV$ be a homogeneous variety of $\fK$-algebras such that for all algebras $X$, $Y$, $Z$ in $\cV$ the comparison map
    \[
        \Phi_{X,Y,Z} \colon X \diamond (Y \diamond Z) \to X \diamond Y \diamond Z
    \]
    is surjective. If (anti-)commutativity holds in $\cV$ then so does (anti-)associativity.\noproof
\end{lemma}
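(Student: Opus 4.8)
The plan is to deduce this from Proposition~\ref{Proposition Lambda Rules} together with the hypothesis of (anti-)commutativity, turning the informal computation given just before the statement into a proof. Since $\cV$ is homogeneous and all the comparison maps $\Phi_{X,Y,Z}$ are surjective, Proposition~\ref{Proposition Lambda Rules} applies and tells us that $\cV$ satisfies the four families of identities in~\eqref{Eq Lambda Rules}; in fact only the first one is needed, namely that for suitable scalars $\LLL_i\in\fK$ the equality
\[
    (xy)z = \LLL_1\, x(yz) + \LLL_2\, x(zy) + \LLL_3\, (yz)x + \LLL_4\, (zy)x
\]
is an identity of $\cV$.

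First I would collapse the right-hand side using (anti-)commutativity. In the commutative case each of $x(zy)$, $(yz)x$ and $(zy)x$ equals $x(yz)$, so this reduces to the identity $(xy)z = \lambda\, x(yz)$ with $\lambda = \LLL_1+\LLL_2+\LLL_3+\LLL_4$; in the anti-commutative case one has $x(zy) = (yz)x = -x(yz)$ and $(zy)x = x(yz)$, so it again reduces to $(xy)z = \lambda\, x(yz)$, now with $\lambda = \LLL_1-\LLL_2-\LLL_3+\LLL_4$. Either way, $\cV$ satisfies a single multilinear identity of the form $(xy)z = \lambda\, x(yz)$.

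Next I would substitute this identity into itself, cyclically permuting $x$, $y$, $z$ and applying (anti-)commutativity at each step, as in the chain $x(yz) = (yz)x = \lambda\, y(zx) = \lambda\, y(xz) = (yx)z = (xy)z = \lambda\, x(yz)$ in the commutative case, and in its sign-twisted variant ending in $x(yz) = -\lambda\, x(yz)$ in the anti-commutative case. This shows that $(1-\lambda)\,x(yz)$ (resp.\ $(1+\lambda)\,x(yz)$) is an identity of $\cV$. Finally, rather than cancelling to conclude $\lambda = 1$ (resp.\ $\lambda = -1$)---which is not licit if $x(yz)$ is itself identically zero---I would read off associativity (resp.\ anti-associativity) directly: $(xy)z - x(yz) = (\lambda-1)\,x(yz) = -(1-\lambda)\,x(yz) = 0$, and similarly $(xy)z + x(yz) = (1+\lambda)\,x(yz) = 0$ in the anti-commutative case.

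I do not expect a genuine obstacle here: with Proposition~\ref{Proposition Lambda Rules} in hand the argument is a brief manipulation of multilinear identities. The one point that deserves care is precisely the last one---extracting (anti-)associativity from the homogeneous relation $(1\mp\lambda)\,x(yz)=0$ instead of from a particular value of $\lambda$---since this keeps the argument correct in the degenerate case where $\cV$ is abelian, in which $x(yz)$ and $(xy)z$ both vanish and the conclusion holds trivially.
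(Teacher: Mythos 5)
Your proof is correct and takes essentially the same route as the paper, whose (inline) argument preceding the lemma collapses the first identity of~\eqref{Eq Lambda Rules} under (anti-)commutativity to $(xy)z=\lambda\,x(yz)$ and then runs exactly your cyclic-substitution chain. Your only refinement---reading off $(xy)z\mp x(yz)=(\lambda\mp 1)\,x(yz)=0$ directly instead of cancelling to conclude $\lambda=\pm 1$---is a slightly more careful way to cover the degenerate case where $x(yz)$ is itself an identity of $\cV$.
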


\begin{example}\label{Example Lie fails surjectivity}
    By the above reasoning, Lie algebras are essentially excluded: indeed, since the validity of the equations makes anti-commutativity imply anti-associativity,  from the Jacobi identity we may deduce
    \begin{align*}
        0 & = x(yz)+y(zx)+z(xy) \ \ \    & \text{(Jacobi identity),}       \\
              & =x(yz)-(yz)x-(xy)z \ \ \  & \text{(anti-associativity and anti-commutativity),} \\
              & =x(yz)+x(yz)+x(yz) \ \ \   & \text{(anti-commutativity and anti-associativity).}
    \end{align*}
    When $\kar(\fK)\neq 3$, we conclude that $x(yz)=0$ and so, since not all Lie algebras are $2$-nilpotent, the variety of $\fK$-Lie algebras does not satisfy the equations \eqref{Eq Lambda Rules} of Proposition~\ref{Proposition Lambda Rules}. We come back to the equation $zxy=0$ in Example~\ref{Example 2-nilpotent magmatic algebras}, and eliminate the characteristic $3$ case in Proposition~\ref{Proposition Injectivity AC}.
\end{example}

We may now consider some positive examples.

\begin{example}\label{Example Surjectivity AC}
    Each of the varieties $\CA$ and $\ANTI$ is an example of an operadic variety of algebras over a field $\fK$ in which the equations \eqref{Eq Lambda Rules} of Proposition~\ref{Proposition Lambda Rules} can be satisfied. For instance, in the first case, we may take
    \[
        \LLL_1=\LLR_1=\LRL_1=\LRR_1=1
    \]
    and all other coefficients zero.
\end{example}

\begin{example}\label{Example non-classical surjective}
    We may create non-classical examples by means of a careful choice of coefficients in \eqref{Eq Lambda Rules}. For instance, the subvariety $\cV$ of $\ALG$ determined by
    \begin{equation*}
        \begin{cases}
            (xy)z = (yz)x = (zx)y \\
            x(yz) = y(zx) = z(xy)
        \end{cases}
    \end{equation*}
    fits those identities, because we may choose
    \[
        \LRL_1=\LRR_2=\LLL_3=\LLR_4=1
    \]
    and all other coefficients zero.

    Note that here, the comparison maps $\Phi_{X,Y,Z} \colon {X \diamond (Y \diamond Z) \to X \diamond Y \diamond Z}$ need not be injective: if we take $X=A$, $Y=B$ and $Z=C$ the free algebras is this subvariety generated by elements $a$, $b$ and $c$, we may calculate that
    \[
        (a(bc))a=(b(ca))a=((ca)a)b=((aa)c)b=(cb)(aa)
    \]
    in $A \diamond B \diamond C$. However, $(aa)(bc)=(a(bc))a=(cb)(aa)$ need not hold in $A \diamond (B \diamond C)$ by the reasoning in Remark~\ref{Remark Independence Variations}.
\end{example}

\begin{example}\label{Example perm algebras}
    Another, quite similar example is the operadic variety of so-called \defn{perm algebras} (\cite{Chapoton} and~\cite[page 235]{Zinbiel}), which are determined by the identities $(xy)z=x(yz)=x(zy)$. Here we may choose
    \[
        \LLL_1=\LLR_3=\LRL_4=\LRR_4=1
    \]
    and all other coefficients zero.
\end{example}

\begin{remark}\label{Remark AC}
    The identities \eqref{Proposition Lambda Rules} imply an instance of the equations
    \[
        \begin{cases}
            \begin{aligned}
                z(xy)=
                \lambda_{1}y(zx) & +\lambda_{2}x(yz)+
                \lambda_{3}y(xz)+\lambda_{4}x(zy)                      \\
                                 & +\lambda_{5}(zx)y+\lambda_{6}(yz)x+
                \lambda_{7}(xz)y+\lambda_{8}(zy)x
            \end{aligned} \\[3ex]
            \begin{aligned}
                (xy)z=
                \lambda_{9}y(zx) & +\lambda_{10}x(yz)+
                \lambda_{11}y(xz)+\lambda_{12}x(zy)                      \\
                                 & +\lambda_{13}(zx)y+\lambda_{14}(yz)x+
                \lambda_{15}(xz)y+\lambda_{16}(zy)x
            \end{aligned}
        \end{cases}
    \]
    ($\lambda_{1}$, \dots, $\lambda_{16}\in \fK$) which characterise~\cite[Theorem 2.12]{GM-VdL2} those varieties of $\fK$-algebras that are \emph{algebraically coherent} in the sense of~\cite{acc}. Indeed, we may take
    \[
        \lambda_{2k}= \LRL_k,\qquad \lambda_{2k+8}= \LLL_k
    \]
    for $1\leq k\leq 4$ and all other $\lambda_i$ zero. Where the identities \eqref{Proposition Lambda Rules} allow us to ``pull the $x$ out of the parentheses'', the identities characterising algebraic coherence allow us to ``pull the $z$ into the parentheses''; which is clearly weaker, since to pull out the $x$, we need to pull in the $z$. Hence any homogeneous variety of $\fK$-algebras with surjective comparison maps $\Phi_{X,Y,Z} \colon {X \diamond (Y \diamond Z) \to X \diamond Y \diamond Z}$ is an algebraically coherent category. As a consequence, any such category has the convenient categorical-algebraic properties mentioned in~\cite{acc}. In~\cite{RVV2}, we further investigate the cosmash associativity condition from a general categorical-algebraic perspective.

    On the other hand, the variety $\LIE$ is algebraically coherent, but we just saw that it does not satisfy the equations of Proposition~\ref{Proposition Lambda Rules}. The same holds for the variety of (non-commutative) associative algebras considered in Example~\ref{Example NCA}.
\end{remark}

\section{Injectivity of the comparison maps}\label{Section Injectivity}

\subsection{Examples and counterexamples}
It is easy to give examples of varieties of $\fK$-algebras whose comparison morphisms $\Phi_{X,Y,Z} \colon {X \diamond (Y \diamond Z) \to X \diamond Y \diamond Z}$ fail to be injective.

\begin{examples}\label{Examples Injectivity}
    \begin{enumerate}
        \item We may, for instance, consider the variety of $\fK$-algebras that satisfy no other identities besides the identity $x(yz)-x(zy)=0$: in the situation of Lemma~\ref{Lemma Independence}, an identity of degree two is missing for us to conclude that $a(bc)=a(cb)$ in $A \diamond (B \diamond C)$, even though this equality holds in~$A \diamond B \diamond C$.
        \item Example~\ref{Example non-classical surjective} shows that the comparison maps $\Phi_{X,Y,Z}$ need not be injective even when they are surjective.
    \end{enumerate}
\end{examples}

Lemma~\ref{Lemma Independence} provides us with an additional technique to exclude cosmash associativity for certain types of algebras. Indeed, when each morphism
\[
    \Psi_{X,Y,Z}\colon (X\diamond Y)\diamond Z\rightarrow X\diamond Y\diamond Z
\]
is an injection, if $A$, $B$ and~$C$ are free algebras as in the statement of the lemma, then
\[
    \lambda_1(ab)c+\lambda_2(ba)c+\lambda_3c(ab)+\lambda_4c(ba)=0
\]
in $A\cosmash B \cosmash C$ implies $\lambda_i=0$ for all $1\leq i\leq 4$. Hence, for instance:

\begin{example}\label{Example 2-nilpotent magmatic algebras}
    An extreme variation on the theme in Examples~\ref{Examples Injectivity} is the subvariety $\NIL$ of $\ALG$ determined by the equations
    \begin{equation*}
        (xy)z=0=x(yz).
    \end{equation*}
    Here all ternary cosmash products vanish, so that each of the comparison maps $\Phi_{X,Y,Z}$ is surjective. The above technique, however, allows us to prove that this variety is not cosmash associative. The reason is, that a repeated binary cosmash product in $\NIL$ may be non-trivial. For instance, if $A$, $B$ and $C$ are freely generated by $a$, $b$ and $c$ as above, then $(ab)c$ is a non-zero element of $(A\cosmash B)\cosmash C$ by Lemma~\ref{Lemma Independence}.
\end{example}

We note that the comparison morphisms $\Phi_{X,Y,Z}$ can be injective without being all surjective:

\begin{example}\label{Example ALG not injective}
    The operadic variety $\ALG$ itself is such. Indeed, by lack of any non-trivial identities in this variety, any equality between elements of $X \diamond (Y \diamond Z)$ must either be induced by an equality in $X$, $Y$ or $Z$ or by basic polynomial manipulations. None of this happens more easily in $X \diamond Y \diamond Z$, so that the map $\Phi_{X,Y,Z}$ is indeed injective. Its non-surjectivity was already commented upon in Examples~\ref{Example Not Surjective}.
\end{example}

\begin{remark}
    When $A$, $B$ and $C$ are free non-associative algebras, Example~\ref{Example ALG not injective} implies that the elements of $A \diamond (B \diamond C)$ are polynomials, since they may now be viewed as elements of the free algebra $A+B+C$. Part of this extends to the free algebras of any variety of $\fK$-algebras $\cV\leq\ALG$, even for  those varieties which are not cosmash injective: indeed, their cosmash product---whose elements are thus polynomials modulo identities in $\cV$---will be a quotient of the cosmash product $A \diamond (B \diamond C)$ in $\ALG$ for some free non-associative algebras $A$, $B$ and $C$. Here we may essentially follow the reasoning of Subsection~2.5 of~\cite{GM-VdL2}. This provides a complementary view on~\ref{Subsection Linear Independence}.
\end{remark}

\begin{example}
    Recall from Examples~\ref{Example Not Surjective} that also the operadic variety of non-associative commutative algebras has non-surjective $\Phi_{X,Y,Z}$. On the other hand, all $\Phi_{X,Y,Z}$ are injective, since any equality between two elements of the cosmash product $X \diamond Y \diamond Z$ is induced by a finite chain of applications of the commutative law. (For instance, if $x\in X$,~$y\in Y$ and $z\in Z$, then $x(yz)+x((yz)z)=x(zy)+x((yz)z)$ is such.) Each such application, however, is also valid in the algebra $X \diamond (Y \diamond Z)$, as an instance of commutativity either in $X + (Y \diamond Z)$ or in $Y+Z$. It follows that there are no equalities in $X \diamond Y \diamond Z$ between elements of $X \diamond (Y \diamond Z)$ which do not already hold in $X \diamond (Y \diamond Z)$.
\end{example}

The case of commutative algebras is in some sense generic. In what follows, we prove that in a variety with surjective $\Phi_{X,Y,Z}$, these comparison maps can only be injective if an identity of degree two holds (Proposition~\ref{Proposition Degree Two}). By means of Lemma~\ref{Lemma Degree Two}, we may then deduce that the algebras are either commutative or anti-commutative.

\subsection{Preparatory results}
We first need to extend Lemma~\ref{Lemma Independence} to a result which holds for quaternary cosmash products. In a category $\cC$ in which all comparison maps $\Phi_{X,Y,Z}$ are monomorphisms, we consider objects $X$, $Y$, $Z$ and $W$. Then the natural transformation
\[
    \Phi_{-,Z,W}\colon (-)\cosmash (Z\cosmash W) \to (-)\cosmash Z\cosmash W\colon \cC\to \cC
\]
is a monomorphism. On the other hand, by the second item in Examples~\ref{Examples Cross-Effects},
\[
    X\cosmash Y\cosmash (Z\cosmash W)\cong \cre_2((-)\cosmash (Z\cosmash W))(X,Y),
\]
while by the third item in Examples~\ref{Examples Cross-Effects},
\[
    X\cosmash Y\cosmash Z\cosmash W\cong \cre_2((-)\cosmash Z\cosmash W)(X,Y).
\]
Since by Remark~\ref{Remark Functor cr}, the functor $\cre_2$ preserves natural monomorphisms, we thus find a monomorphism
\begin{align*}
    X\cosmash Y\cosmash (Z\cosmash W) & \cong \cre_2((-)\cosmash (Z\cosmash W))(X,Y) \\
                                      & \to \cre_2((-)\cosmash Z\cosmash W)(X,Y)     \\
                                      & \cong X\cosmash Y\cosmash Z\cosmash W,
\end{align*}
which when composed with the injection
\[
    \Phi_{X,Y,Z\cosmash W}\colon(X\cosmash Y)\cosmash (Z\cosmash W)\to X\cosmash Y\cosmash (Z\cosmash W)
\]
yields an inclusion $(X\cosmash Y)\cosmash (Z\cosmash W)\to X\cosmash Y\cosmash Z\cosmash W$ which happens to be the canonical map. Hence:

\begin{lemma}\label{Lemma Comparison Injective Implication}
    The canonical comparison morphism
    \[
        (X\cosmash Y)\cosmash (Z\cosmash W)\to X\cosmash Y\cosmash Z\cosmash W
    \]
    is a monomorphism whenever $\Phi_{-,Z,W}\colon (-)\cosmash (Z\cosmash W) \to (-)\cosmash Z\cosmash W\colon \cC\to \cC$ is a natural monomorphism. \noproof
\end{lemma}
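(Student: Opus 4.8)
The plan is to obtain the canonical comparison morphism as a composite of two monomorphisms, exploiting the cross-effect descriptions of the higher cosmash products recorded in Examples~\ref{Examples Cross-Effects} together with the fact from Remark~\ref{Remark Functor cr} that $\cre_2$, seen as a functor $\FUN(\cC,\cC)\to\FUN(\cC^2,\cC)$, preserves natural monomorphisms. The idea is that the quaternary cosmash product $X\cosmash Y\cosmash Z\cosmash W$ and the ``mixed'' product $X\cosmash Y\cosmash(Z\cosmash W)$ are both values of $\cre_2$ applied to closely related endofunctors of $\cC$, namely $(-)\cosmash Z\cosmash W$ and $(-)\cosmash(Z\cosmash W)$, and that these two endofunctors are comparable precisely by the natural transformation $\Phi_{-,Z,W}$ appearing in the hypothesis.

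Concretely, I would proceed in four steps. First, record that $\Phi_{-,Z,W}$ is a natural transformation $(-)\cosmash(Z\cosmash W)\Rightarrow(-)\cosmash Z\cosmash W$ between endofunctors of $\cC$, which by hypothesis is a monomorphism in $\FUN(\cC,\cC)$. Second, apply $\cre_2$: using item~(2) of Examples~\ref{Examples Cross-Effects} with the object $Z\cosmash W$ in the last slot, $\cre_2\bigl((-)\cosmash(Z\cosmash W)\bigr)(X,Y)\cong X\cosmash Y\cosmash(Z\cosmash W)$, and using item~(3), $\cre_2\bigl((-)\cosmash Z\cosmash W\bigr)(X,Y)\cong X\cosmash Y\cosmash Z\cosmash W$; since $\cre_2$ preserves natural monomorphisms, evaluating $\cre_2(\Phi_{-,Z,W})$ at $(X,Y)$ produces a monomorphism
\[
    X\cosmash Y\cosmash(Z\cosmash W)\longrightarrow X\cosmash Y\cosmash Z\cosmash W.
\]
Third, precompose this with the comparison morphism $\Psi_{X,Y,Z\cosmash W}\colon(X\cosmash Y)\cosmash(Z\cosmash W)\to X\cosmash Y\cosmash(Z\cosmash W)$, which under our hypothesis (equivalently, after transporting along the symmetry isomorphisms of Remark~\ref{symmetry of cosmash}) is itself a monomorphism. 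Fourth, check that the resulting composite $(X\cosmash Y)\cosmash(Z\cosmash W)\to X\cosmash Y\cosmash Z\cosmash W$ coincides with the canonical comparison morphism.

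I expect the fourth step to be the real obstacle: it is the only part that is not formal. Both the composite just built and the canonical comparison morphism are maps into $X\cosmash Y\cosmash Z\cosmash W$, which is a kernel sitting inside $X+Y+Z+W$ via $\iota_{X,Y,Z,W}$, so by the universal property of that kernel it suffices to verify that the two maps agree after composing with $\iota_{X,Y,Z,W}$. This reduces to unwinding the cross-effect isomorphisms of Examples~\ref{Examples Cross-Effects}(2) and~(3)---each of which is assembled from the kernels appearing in Figure~\ref{Figure Ternary via Cross Effect} and its straightforward quaternary analogue (see~\cite{RVV3})---and chasing the various inclusions $\iota$, in particular tracking how $Z\cosmash W\hookrightarrow Z+W$ sits inside the relevant coproducts. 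The bookkeeping is routine but intricate; a cleaner alternative is to pin down the canonical comparison morphism by a universal property (as the unique factorisation of an evident inclusion through a kernel) and match that property directly, which avoids handling the isomorphisms explicitly.
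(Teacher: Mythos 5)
Your proposal follows essentially the same route as the paper: the paper also applies $\cre_2$ (via Remark~\ref{Remark Functor cr} and items~(2) and~(3) of Examples~\ref{Examples Cross-Effects}) to the natural monomorphism $\Phi_{-,Z,W}$ and composes the resulting monomorphism $X\cosmash Y\cosmash(Z\cosmash W)\to X\cosmash Y\cosmash Z\cosmash W$ with the comparison $(X\cosmash Y)\cosmash(Z\cosmash W)\to X\cosmash Y\cosmash(Z\cosmash W)$, asserting that the composite ``happens to be'' the canonical map. Your fourth step, checking this identification after composing with $\iota_{X,Y,Z,W}$, is exactly the verification the paper leaves implicit, so the argument is correct and matches the paper's proof.
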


\begin{lemma}\label{Lemma Independence Ternary}
    Suppose $\cV$ is a variety of $\fK$-algebras which satisfies no non-trivial identities of degree smaller than or equal to $2$ and in which the comparison map
    \[
        \Phi_{X,Y,Z} \colon {X \diamond (Y \diamond Z) \to X \diamond Y \diamond Z}
    \]
    is always injective. Let $A$, $B$, $C$ and $D$ be free algebras in $\cV$ on a single generator written $a$, $b$, $c$ and $d$, respectively. Then the sets of polynomials
    \[
        \{(ab)c, (ba)c, c(ab), c(ba)\}
    \]
    and
    \[
        \{(ab)(cd), (ba)(cd), (ab)(dc), (ba)(dc), (cd)(ab), (cd)(ba), (dc)(ab), (dc)(ba)\}
    \]
    are linearly independent in the free algebra $A+B+C+D$.

    Furthermore, if either commutativity or anti-commutativity is added to the identities of~$\cV$, then $(ab)c$ and $(ab)(cd)$ are still non-zero in $A+B+C+D$.
\end{lemma}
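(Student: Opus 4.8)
I would prove the two linear independence statements first. For the four monomials $(ab)c,(ba)c,c(ab),c(ba)$, note that by the symmetry of the cosmash product (Remark~\ref{symmetry of cosmash}) together with the redundancy observed just after Definition~\ref{Def Assoc Cosmash}, injectivity of all the $\Phi_{X,Y,Z}$ entails injectivity of all the $\Psi_{X,Y,Z}$; in particular $\Psi_{A,B,C}\colon(A\cosmash B)\cosmash C\to A\cosmash B\cosmash C$ is a monomorphism sending each of these four monomials to the polynomial of the same name, so Lemma~\ref{Lemma Independence} already yields their linear independence in $A\cosmash B\cosmash C$, hence in $A+B+C$, hence in $A+B+C+D$, of which $A+B+C$ is a split subobject. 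For the eight monomials $(ab)(cd),\dots,(dc)(ba)$, injectivity of all the $\Phi_{X,Y,Z}$ lets me apply Lemma~\ref{Lemma Comparison Injective Implication} with $C,D$ in the roles of $Z,W$: the comparison map $(A\cosmash B)\cosmash(C\cosmash D)\to A\cosmash B\cosmash C\cosmash D$ is a monomorphism, and composing with the defining inclusion $A\cosmash B\cosmash C\cosmash D\hookrightarrow A+B+C+D$ gives a monomorphism sending eight suitable elements of $(A\cosmash B)\cosmash(C\cosmash D)$ (products of $ab,ba\in A\cosmash B$ with $cd,dc\in C\cosmash D$) to the eight polynomials of the statement; so it suffices to prove these eight elements linearly independent in $(A\cosmash B)\cosmash(C\cosmash D)$. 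For this I would re-run the computation of Subsection~\ref{Subsection Linear Independence} and Remark~\ref{Remark Independence Variations} with $A\cosmash B$ and $C\cosmash D$ in place of two of the one-generator free algebras: as $\cV$ has no identity of degree $\le2$, these decompose as vector spaces as $\fK[\{ab,ba\}]\oplus V$ and $\fK[\{cd,dc\}]\oplus V'$ with multiplications landing in $V$ and $V'$, and the analogue of the subspace $D$---the eight-dimensional $(\fK[\{ab,ba\}]\otimes\fK[\{cd,dc\}])\oplus(\fK[\{cd,dc\}]\otimes\fK[\{ab,ba\}])$ spanned exactly by our eight monomials---is a genuine direct summand of $(A\cosmash B)+(C\cosmash D)$, untouched by the identities of $\cV$; and $(A\cosmash B)\cosmash(C\cosmash D)$ is a subobject of this coproduct.

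For the last assertion I would pass to the variety $\cV'$ obtained from $\cV$ by adjoining commutativity (the anti-commutative case being parallel), write $A',B',C',D'$ for its free algebras on single generators, and establish that $(ab)c\neq0$ in $(A'\cosmash B')+C'$ and $(ab)(cd)\neq0$ in $(A'\cosmash B')+(C'\cosmash D')$; since these cosmash products sit inside the respective coproducts, and since the inclusions $A'\cosmash B'\hookrightarrow A'+B'$ and $C'\cosmash D'\hookrightarrow C'+D'$ induce monomorphisms $(A'\cosmash B')+C'\to A'+B'+C'$ and $(A'\cosmash B')+(C'\cosmash D')\to A'+B'+C'+D'$, this gives non-vanishing in $A'+B'+C'+D'$. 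The heart of the matter is that passing from $\cV$ to $\cV'$ only affects the degree-$2$ layer: in $A'\cosmash B'$ commutativity merely identifies $ab$ with $ba$, so its degree-$2$ part becomes the one-dimensional $\fK[\{ab\}]$, and likewise $C'\cosmash D'$ has one-dimensional degree-$2$ part $\fK[\{cd\}]$; re-running the computation of Subsection~\ref{Subsection Linear Independence} inside $\cV'$, the analogue of $D$ collapses to the one-dimensional $\fK[\{(ab)c\}]$, resp.\ $\fK[\{(ab)(cd)\}]$---commutativity in the ambient coproduct further identifies $(ab)c$ with $c(ab)$, and $(ab)(cd)$ with $(cd)(ab)$, but nothing forces either to vanish---while the remaining identities of $\cV$, all of degree $\ge3$, still cannot reach this degree-$3$, resp.\ degree-$4$, summand, for exactly the grading reason used in Subsection~\ref{Subsection Linear Independence}: a degree-$\ge3$ identity substituted into a product involving a degree-$\ge2$ ``block'' cannot land in total degree $3$ or $4$.

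I expect the main obstacle to be this last step, on two counts. First, one must verify carefully that, after adjoining (anti-)commutativity, no collapse of $(ab)c$ or $(ab)(cd)$ occurs beyond the harmless sign- and position-identifications just described---here the absence of any identity of degree $\le2$ in $\cV$ itself is what keeps the relevant low-degree pieces rigid. Second, one needs the monomorphy of the maps $(A'\cosmash B')+C'\to A'+B'+C'$ and $(A'\cosmash B')+(C'\cosmash D')\to A'+B'+C'+D'$, i.e.\ that coproducts of monomorphisms of $\fK$-algebras are again monomorphisms; this is where the explicit description of coproducts used throughout Sections~\ref{Section Definition Cosmash}--\ref{Section Injectivity} must be invoked once more. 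The eight-monomial statement carries a milder version of the first difficulty, since its computation too is run with cosmash products, not one-generator free algebras, in two of the three slots.
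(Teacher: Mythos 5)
Your handling of the two linear-independence statements is correct and coincides with the paper's own proof: symmetry (Remark~\ref{symmetry of cosmash}) turns injectivity of the maps $\Phi_{X,Y,Z}$ into injectivity of $\Psi_{A,B,C}$, so Lemma~\ref{Lemma Independence} transfers into $A\cosmash B\cosmash C\leq A+B+C\leq A+B+C+D$; and Lemma~\ref{Lemma Comparison Injective Implication}, together with a re-run of the block analysis of~\ref{Subsection Linear Independence} with $A\cosmash B$ and $C\cosmash D$ as the two blocks, gives the eight quaternary monomials.

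The final clause is where your proposal breaks down. The block analysis inside $(A'\cosmash B')+C'$ and $(A'\cosmash B')+(C'\cosmash D')$ is fine, but your transfer to the free algebra rests on the claim that coproducts of monomorphisms of $\fK$-algebras are again monomorphisms, and this is false precisely in the situations the clause is meant to control. Notice first that your argument for this clause never invokes the injectivity of the comparison maps; without that input the conclusion fails: take $\cV$ to be the variety of anti-associative algebras (which has no non-trivial identities of degree $\leq 2$) and adjoin anti-commutativity, giving $\ANTI$; for $\kar(\fK)\neq 2$ anti-associativity forces $(xy)(zt)=0$, so $(ab)(cd)=0$ in $A+B+C+D$ --- this is exactly the computation in Proposition~\ref{Proposition Injectivity AC}. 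At the same time your own block analysis, applied to $\ANTI$, correctly shows that $(ab)(cd)\neq 0$ in $(A\cosmash B)+(C\cosmash D)$; hence the induced map $(A\cosmash B)+(C\cosmash D)\to A+B+C+D$ is \emph{not} injective, even though $A\cosmash B\to A+B$ and $C\cosmash D\to C+D$ are. So the coproduct-preserves-monos principle cannot simply be ``invoked once more from the explicit description of coproducts'': it is exactly the point at which higher-degree identities may collapse the low-degree summand, and only the injectivity hypothesis rules this out. The repair is to transfer exactly as in your first two steps: non-vanishing of $(ab)c$ in $(A\cosmash B)\cosmash C$ (from the simplified block analysis) is pushed along the injective $\Psi_{A,B,C}$ into $A\cosmash B\cosmash C\leq A+B+C+D$, and non-vanishing of $(ab)(cd)$ in $(A\cosmash B)\cosmash(C\cosmash D)$ along the injective comparison map of Lemma~\ref{Lemma Comparison Injective Implication}. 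This is the paper's intended ``straightforward variation on the reasoning leading up to Lemma~\ref{Lemma Independence}'': only the block structure simplifies; the transfer mechanism is unchanged.
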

\begin{proof}
    Independence of the first set follows from Lemma~\ref{Lemma Independence} and the comment immediately following it, once we notice that the coproduct $A+B+C$ is the free algebra on $\{a,b,c\}$. For the second set, we note that by Lemma~\ref{Lemma Comparison Injective Implication}, the comparison map
    \[
        (A \cosmash B) \cosmash (C \cosmash D) \to A \cosmash B \cosmash C \cosmash D
    \]
    is injective as well. Analogously to the reasoning in Lemma~\ref{Lemma Independence}, we can prove the given polynomials are indeed linearly independent in ${A+B+C+D}$. The final statement follows from a straightforward variation on the reasoning leading up to Lemma~\ref{Lemma Independence} in the settings of commutative and anti-commutative algebras, where the structure of the coproduct is simpler.
\end{proof}

We may now return to Example~\ref{Example Surjectivity AC} where we explained that the variety $\ANTI$ of anti-commutative anti-associative $\fK$-algebras has surjective $\Phi_{X,Y,Z}$. It is not cosmash associative, though---unless $\kar(\fK)=2$, which is just the commutative case again:

\begin{proposition}\label{Proposition Injectivity AC}
    If $\fK$ is a field of characteristic different from $2$, then a variety of anti-commutative anti-associative $\fK$-algebras is only cosmash associative when it is abelian.
\end{proposition}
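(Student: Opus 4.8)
The plan is to argue by contradiction: suppose $\cV$ is a subvariety of $\ANTI$ over a field $\fK$ with $\kar(\fK)\neq 2$ which is cosmash associative but \emph{not} abelian, and derive $\kar(\fK)=2$. Since $\cV$ is non-abelian, the multilinear polynomial $xy$ is not an identity of $\cV$, so the element $ab$ is non-zero in the free $\cV$-algebra on two generators, while $a^{2}=0$ in the free $\cV$-algebra on one generator (anti-commutativity together with $\kar(\fK)\neq 2$). The governing idea is that anti-commutativity makes the coproduct of free $\cV$-algebras extremely transparent --- it is built from ``exterior-type'' monomials, just as in the commutative case treated in the proof of Lemma~\ref{Lemma Independence Ternary} and in Remark~\ref{Remark Independence Variations} --- while cosmash associativity forces all the comparison maps $\Phi$, $\Psi$, and hence (by Lemma~\ref{Lemma Comparison Injective Implication}) also the quaternary comparison maps, to be monomorphisms.

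First I would reduce the degree. Because $\cV$ is anti-commutative and anti-associative, the degree-three multilinear monomial $x(yz)$ is totally antisymmetric in its arguments; in particular $(xy)z=-x(yz)$. Applying $\Psi_{A,B,C}$ to $A$, $B$, $C$ free on single generators, the binary cosmash $A\cosmash B$ is the one-dimensional zero-multiplication algebra on $ab$, so $(A\cosmash B)\cosmash C$ is spanned by $(ab)c$ and $\Psi_{A,B,C}$ sends $(ab)c$ to $(ab)c=-a(bc)$ in $A\cosmash B\cosmash C$. As $\Psi_{A,B,C}$ is injective and, $\cV$ being non-abelian, $(ab)c\neq 0$ in $(A\cosmash B)\cosmash C$, we get $a(bc)\neq 0$ in $A+B+C$; so $\cV$ does not satisfy $x(yz)=0$. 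Escalating, with $A$, $B$, $C$, $D$ free on single generators, Lemma~\ref{Lemma Comparison Injective Implication} yields a monomorphism $(A\cosmash B)\cosmash(C\cosmash D)\to A\cosmash B\cosmash C\cosmash D$ sending the non-zero element $(ab)(cd)$ to $(ab)(cd)=-((ab)c)d$, whence $(ab)(cd)\neq 0$ in $A+B+C+D$ and, via anti-associativity, $\cV$ has no non-trivial degree-four monomial identity either. Iterating in this way, $\cV$ satisfies no non-trivial multilinear identity beyond the consequences of anti-commutativity and anti-associativity; since $x^{2}=0$ makes every non-multilinear monomial of positive degree collapse, $\cV$ has the same free algebras as $\ANTI$, so in fact $\cV=\ANTI$, and it remains to refute the cosmash associativity of $\ANTI$ itself.

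The core of the argument --- the step I expect to be the genuine obstacle --- is to exhibit a non-injectivity of some $\Phi_{X,Y,Z}$ (equivalently $\Psi_{X,Y,Z}$) for free $\ANTI$-algebras: two elements of $X\cosmash(Y\cosmash Z)$ whose images coincide in the unbiased cosmash $X\cosmash Y\cosmash Z$ without already agreeing upstairs. The delicate point is that this collision is invisible on free algebras on single generators, where the iterated and unbiased cosmash products have equal finite dimension and the comparison map is plainly an isomorphism; one must take $Y$ and $Z$ free on at least two generators, so that the binary cosmash $Y\cosmash Z$ itself carries a non-trivial multiplication, and then trace $\Phi_{X,Y,Z}$ through the cross-effect presentation of Lemma~\ref{Lemma ternary cosmash via cross effect}. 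The expected outcome is an identification $w=-w$ for a non-zero ``exterior monomial'' $w$ which anti-associativity forces on one side of the comparison but not the other, giving $2=0$ in $\fK$, against $\kar(\fK)\neq 2$; together with Lemma~\ref{Lemma Degree Two} --- any anti-commutative variety with a further degree-two identity is abelian --- this contradicts our assumptions. Unlike the generic mechanism of Proposition~\ref{Proposition Degree Two}, here anti-commutativity is already the degree-two identity one would like to reach, so the collision lives one degree higher, which is exactly what makes the bookkeeping in this last step delicate.
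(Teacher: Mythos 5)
There is a genuine gap, and it sits exactly where you announce ``the core of the argument''. The decisive fact you are missing is that anti-associativity \emph{alone} already forces a degree-four identity: in any anti-associative algebra,
\[
(xy)(zt)=-((xy)z)t=(x(yz))t=-x((yz)t)=x(y(zt))=-(xy)(zt),
\]
so $2\,(xy)(zt)=0$ and hence $(xy)(zt)=0$ whenever $\kar(\fK)\neq 2$. At the moment you deduce, from cosmash associativity via Lemma~\ref{Lemma Comparison Injective Implication} and the coproduct analysis behind Lemma~\ref{Lemma Independence Ternary}, that $(ab)(cd)\neq 0$ in $A+B+C+D$, the proof is in fact \emph{finished}: this contradicts $(ab)(cd)=0$, so a non-abelian subvariety of $\ANTI$ cannot have all comparison maps injective. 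This is precisely the paper's proof. Instead, you draw from $(ab)(cd)\neq 0$ the conclusion that ``via anti-associativity, $\cV$ has no non-trivial degree-four monomial identity'' --- which is the opposite of the truth, since $(xy)(zt)=0$ \emph{is} a consequence of anti-associativity --- and you go on to claim $\cV=\ANTI$ (itself a leap: absence of monomial identities does not rule out non-monomial ones) and to defer the contradiction to a final step.

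That final step is then never carried out: you only describe an ``expected outcome'', namely an explicit collision $w=-w$ in some ternary comparison $\Phi_{X,Y,Z}$ with $Y$, $Z$ free on several generators. As written, this is a hope, not a proof, so the argument is incomplete even granting the earlier missteps. Note also that no explicit ternary counterexample is needed: the collision already occurs with all four algebras free on a \emph{single} generator, but one level up --- $(ab)(cd)$ is non-zero in $(A\cosmash B)\cosmash(C\cosmash D)$ yet zero in $A\cosmash B\cosmash C\cosmash D$ --- and Lemma~\ref{Lemma Comparison Injective Implication} converts this quaternary failure into the failure of injectivity of some ternary $\Phi_{X,C,D}$ (for some $X$ which need not be free on one generator, consistent with your correct observation that the single-generator ternary comparisons are isomorphisms). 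In short: your reduction steps contain the right ingredients, but you walk past the contradiction they produce, replace it with a false intermediate claim, and leave the substitute argument unexecuted.
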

\begin{proof}
    This follows from the well-known fact that we may deduce the identity $xyzt=0$ from anti-associativity. Indeed,
    \[
        (xy)(zt)=-((xy)z)t=(x(yz))t=-x((yz)t)=x(y(zt))=-(xy)(zt),
    \]
    whence $(xy)(zt)=0$ follows---as long as $\kar(\fK)\neq 2$. Letting $A$, $B$, $C$, $D$ be free with respective generators $a$, $b$, $c$, $d$ we have that, in particular, the element $(ab)(cd)$ of $A+B+C+D$ is zero. Lemma~\ref{Lemma Independence Ternary} now tells us that, if  the variety in question is non-abelian, then the canonical $\Phi_{X,Y,Z}$ cannot all be injective.
\end{proof}

\subsection{Result and proof}
From this, combined with Lemma~\ref{Lemma Comm implies Assoc} and Lemma~\ref{Lemma Degree Two}, we may now deduce that amongst varieties of algebras which satisfy an identity of degree two, only the commutative associative ones are potentially cosmash associative. In what follows, our strategy is to show that injectivity of the comparison maps, together with the identities \eqref{Eq Lambda Rules}, implies the existence of such a degree two identity. Indeed, assuming a variety to be cosmash associative entails some linear dependence conditions (Proposition~\ref{Proposition Lambda Rules}), while additionally assuming the non-existence of non-trivial degree $\leq 2$ identities implies some linear \emph{in}dependence conditions (Lemma~\ref{Lemma Independence Ternary}). We will now proceed explain how one can show that these two assumptions contradict each other, allowing us to obtain Proposition~\ref{Proposition Degree Two}. This mimics the approach followed in the articles~\cite{GM-VdL2,GM-VdL3}: the former uses elementary methods to arrive at a characterisation (of a different property, irrelevant for our present purposes) in the presence of a degree two identity, whereas in the latter a proof by computer is needed to show that such a degree two identity must hold.

\begin{proposition}\label{Proposition Degree Two}
    If a homogeneous variety of $\fK$-algebras $\cV$ has an associative cosmash product, then the variety must satisfy at least one non-trivial identity of degree $\leq 2$.
\end{proposition}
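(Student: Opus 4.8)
The plan is to argue by contradiction: assume that the homogeneous variety $\cV$ has an associative cosmash product but satisfies no non-trivial identity of degree $\leq 2$, and derive an impossibility.

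Cosmash associativity makes every comparison map $\Phi_{X,Y,Z}$ invertible, hence in particular both surjective and injective, and I would exploit these two facts in opposite directions. On the one hand, surjectivity together with Proposition~\ref{Proposition Lambda Rules} produces sixteen scalars $\LLL_i$, $\LLR_i$, $\LRL_i$, $\LRR_i$ ($i\in\{1,2,3,4\}$) in $\fK$ for which the identities~\eqref{Eq Lambda Rules} hold in $\cV$; these let us ``pull a variable out of a pair of parentheses'' in any degree-$3$ monomial, and hence, by iterating, rewrite any monomial of degree $\geq 3$. On the other hand, injectivity, combined with the standing assumption that $\cV$ has no non-trivial identity of degree $\leq 2$, puts us in the situation of Lemma~\ref{Lemma Independence Ternary}: writing $A$, $B$, $C$, $D$ for the free $\cV$-algebras on single generators $a$, $b$, $c$, $d$---so that $A+B+C+D$ is the free $\cV$-algebra on $\{a,b,c,d\}$---the eight ``degree-$4$ monomials''
\begin{gather*}
    (ab)(cd),\quad (ba)(cd),\quad (ab)(dc),\quad (ba)(dc),\\
    (cd)(ab),\quad (cd)(ba),\quad (dc)(ab),\quad (dc)(ba)
\end{gather*}
are linearly independent in $A+B+C+D$, as are the degree-$3$ family $\{(ab)c,(ba)c,c(ab),c(ba)\}$ and, by the symmetry of the cosmash product, all of their images under permutations of the variables.

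Next I would make these two conclusions collide. Since~\eqref{Eq Lambda Rules} are identities of $\cV$, they hold in $A+B+C+D$, and together with their left and right multiples by generators they cut the multilinear degree-$\leq 4$ part of this free algebra down to an explicit quotient of the corresponding free non-associative one, depending only on the sixteen coefficients $\LLL_i$, $\LLR_i$, $\LRL_i$, $\LRR_i$. Being homogeneous of degree $\geq 3$, these relations can never produce an identity of degree $\leq 2$; what I claim instead is that they always produce a non-trivial linear dependence amongst the monomials that Lemma~\ref{Lemma Independence Ternary} and its symmetric variants declare independent. This is the point at which a computer algebra computation is needed: encoding the relations as a matrix over the polynomial ring $\fK[\LLL_i,\LLR_i,\LRL_i,\LRR_i]$ and eliminating---using \Singular{} and \Mathematica{}---one verifies that for \emph{every} value of the sixteen coefficients some non-zero combination of the eight degree-$4$ monomials above, or of one of the degree-$3$ families, vanishes in the quotient. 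This contradicts Lemma~\ref{Lemma Independence Ternary}, and the contradiction shows that $\cV$ must satisfy a non-trivial identity of degree $\leq 2$.

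I expect the main obstacle to be precisely this computation, and one cannot avoid it by working in degree $3$ alone: the varieties of perm algebras (Example~\ref{Example perm algebras}) and the one of Example~\ref{Example non-classical surjective} satisfy~\eqref{Eq Lambda Rules} and have no degree-$\leq 2$ identity, so the degree-$3$ content of~\eqref{Eq Lambda Rules} admits ``exotic'' solutions for the coefficients, which are ruled out only once the stronger degree-$4$ input of Lemma~\ref{Lemma Independence Ternary} enters---over those two varieties one already forces $(ab)(cd)=(ab)(dc)$, respectively $(ab)(cd)=(dc)(ba)$, both forbidden by the lemma. The ambient space (multilinear polynomials of degree $\leq 4$ in four variables, of dimension $120$) and the parameter space (sixteen further indeterminates) are both large, and some care is needed to ensure that the finitely many generating substitutions really do capture every consequence of~\eqref{Eq Lambda Rules} bearing on those monomials; organising the elimination sensibly---say, by first imposing the degree-$3$ consistency relations among the coefficients and then reducing degree by degree---is where the genuine work lies, after which confirming that no value of the coefficients escapes the conclusion is a routine, if sizeable, Gröbner basis computation.
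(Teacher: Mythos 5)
Your proposal follows essentially the same route as the paper: combine the identities~\eqref{Eq Lambda Rules} supplied by Proposition~\ref{Proposition Lambda Rules} (surjectivity) with the linear independence of the multilinear degree-$3$ and degree-$4$ monomials from Lemma~\ref{Lemma Independence Ternary} (injectivity plus the absence of degree-$\leq 2$ identities), and then show by a Gröbner-basis elimination over the sixteen coefficients that these two constraints are incompatible---which is exactly what the paper does via its $H^X$ rewriting operators and the resulting system of $96$ polynomial equations, including your observation that degree $3$ alone does not suffice. The one detail your sketch glosses over is uniformity in the characteristic: the paper arranges the inconsistency certificate over $\Z$ (two certificates whose integer values have greatest common divisor $2$, plus a separate characteristic-$2$ computation), but this is a refinement of the same computation rather than a different argument.
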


Suppose we are given any multilinear monomial $M$ in $n \geq 3$ variables $x_1$, \dots, $x_n$ and any multilinear monomial $X$ in a subset of the variables $\{x_1 , \ldots, x_n \}$. Observe that there is at most one identity among \eqref{Eq Lambda Rules} that can be used to pull $X$ one step out of the parentheses.
This allows us to define a linear map $H^X$ from multilinear polynomials (in the variables $x_i$) to multilinear polynomials. For each multilinear monomial $M$ we define $H^X(M)$ as follows:
\begin{enumerate}
    \item if one of the left hand sides of the identities \eqref{Eq Lambda Rules}, where $x$ is taken to be $X$, is applicable to the monomial $M$, then we let $H^X(M)$ be the polynomial we get by substituting the corresponding right hand side of the applicable identity into $M$;
    \item otherwise, we let $H^X(M)$ equal $M$.
\end{enumerate}
By linear extension we can apply the map $H^X$ to any multilinear polynomial. We note that the case (2) happens precisely when $X$ does not appear as a submonomial of $M$, or when $M=XY$ or $M=YX$ for some submonomial $Y$.

In effect, $H^X$ pulls the submonomial $X$ one step outside of the parentheses in each monomial, whenever possible. For example, if $p(a,b,c,d) = d(a(bc))$, then $H^b(p)$ will be
\begin{equation*}
    \LRL_1 d(b(ca)) + \LRL_2 d(b(ac)) + \LRL_3 d((ca)b) + \LRL_4 d((ac)b) \,.
\end{equation*}
On the other hand
\[
    H^b(b(a(dc))) = b(a(dc))\,,
\]
because none of the left hand sides of the identities \eqref{Eq Lambda Rules} is applicable when $x = b$, since this $b$ is already completely outside of the parentheses.

Note that $H^X$ acts by transforming non-associative polynomials using the identities \eqref{Eq Lambda Rules} that hold in any cosmash associative variety. Therefore, when viewing polynomials modulo the identities of a cosmash associative variety, $H^X$ behaves as the identity transformation.

We can now proceed as follows. Starting with a polynomial such as $p(a,b,c) = (ab)c$, we have
\begin{align*}
    H^b(p) & = \LLR_1 b(ac) + \LLR_2 b(ca) + \LLR_3 (ac)b + \LLR_4 (ca)b
\end{align*}
and
\begin{align*}
    H^b(H^a(p)) = \, & ( \LLL_2 \LLL_3+ \LLL_4 \LLR_2+ \LLL_1 \LRL_2 + \LLL_2 \LRR_2)b(ac)    \\
                     & + ( \LLL_1 \LLL_3+ \LLL_4 \LLR_1+ \LLL_1 \LRL_1+ \LLL_2 \LRR_1)b(ca)   \\
                     & + ( \LLL_3 \LLL_4 + \LLL_4 \LLR_4+ \LLL_1 \LRL_4+ \LLL_2 \LRR_4)(ac)b  \\
                     & + ( \LLL_3 \LLL_3 + \LLL_4 \LLR_3+ \LLL_1 \LRL_3+ \LLL_2 \LRR_3)(ca)b.
\end{align*}
We must have
\[
    H^b(H^a(p)) - H^b(p) = 0
\]
modulo the identities of $\cV$, since $\cV$ is assumed to be cosmash associative. Moreover, observe that the monomials of $H^b(H^a(p)) - H^b(p)$ are scalar multiples of the elements of the set
\[
    \{ b(ca),b(ac), (ca)b , (ac)b \}
\]
of linearly independent polynomials (Lemma~\ref{Lemma Independence Ternary}) in the free algebra in $\cV$ generated by $\{a,b,c\}$. Therefore, the coefficients of the monomials must be zero, giving us the four equations
\begin{equation*}
    \begin{cases}
        \LLL_2\LLL_3 + \LLL_4\LLR_2+ \LLL_1\LRL_2+ \LLL_2\LRR_2 - \LLR_1 = 0 \\
        \LLL_1\LLL_3+ \LLL_4\LLR_1+ \LLL_1\LRL_1 +\LLL_2\LRR_1 - \LLR_2 = 0  \\
        \LLL_3\LLL_4 +\LLL_4\LLR_4 +\LLL_1\LRL_4 + \LLL_2\LRR_4 - \LLR_3 = 0 \\
        \LLL_3 \LLL_3 +\LLL_4\LLR_3 +\LLL_1\LRL_3 +\LLL_2\LRR_3 - \LLR_4 = 0 \,.
    \end{cases}
\end{equation*}
Each of these equations expresses an equality involving elements $\LLL_i$, $\LLR_i$, $\LRL_i$, $\LRR_i$ of the field $\fK$, which allows us to view their left hand sides as \emph{associative commutative} polynomials over $\Z$ with variables in $\fK$. Hence these four equations may be considered as equations in the ``ordinary'' (= associative commutative unitary) polynomial algebra over the symbols $\LLL_i$, $\LLR_i$, $\LRL_i$, $\LRR_i$, $1\leq i\leq 4$, which we shall here denote $\Z\ldbrack\LLL_i, \LLR_i, \LRL_i,\LRR_i\rdbrack$.

Our end goal is to reach an inconsistent system of equations by varying $p$ and the applications of $H^X$, with each choice giving us a set of equations in the variables $\LLL_i$, $\LLR_i$, $\LRL_i$ and $\LRR_i$, which need to be satisfied under our assumptions.

We get a total of $32$ degree $2$ equations by applying $H^X$ in various ways to $p = (ab)c$ and $p = a(bc)$. However, these $32$ equations do not form an inconsistent system, so we add another $64$ degree $3$ equations, which we produce using the same schema as described above, except $p$ will be a degree $4$ monomial and we will be using the independence of the set
\[
    \{(ab)(cd), (ba)(cd), (ab)(dc), (ba)(dc), (cd)(ab), (cd)(ba), (dc)(ab), (dc)(ba)\}
\]
of monomials, which is stated in Lemma~\ref{Lemma Independence Ternary}. The full set of equations is listed in Appendix~\ref{Appendix Equations}, along with the corresponding choices of $p$ and the applications of~$H^X$ which generate them.
We note that there is a lot of redundancy in that set of equations. By random sampling we have found a subset of 39 equations which is inconsistent as well. However, it is computationally less intensive to prove the inconsistency of the full set of equations.

\begin{proof}[Proof of Proposition~\ref{Proposition Degree Two}]
    Using Lemma~\ref{Lemma Independence Ternary} repeatedly on several polynomials~$p$, the above process yields a system of equations $(f_i=0)_{1\leq i\leq 96}$ in the algebra  $\Z\ldbrack\LLL_i, \LLR_i, \LRL_i,\LRR_i\rdbrack$; see Appendix~\ref{Appendix Equations}. We show that this system is inconsistent, by providing coefficients $\mu_i$ in $\Z\ldbrack\LLL_i, \LLR_i, \LRL_i,\LRR_i\rdbrack$ such that $\sum\mu_i f_i$ is a non-zero integer in~$\fK$. Clearly then, a common solution for the equations $(f_i=0)_{1\leq i\leq 96}$ cannot exist. Hence, the hypothesis of Lemma~\ref{Lemma Independence Ternary} that $\cV$ does not satisfy any non-trivial equations of degree~$2$ must be false.

    The size of the system makes it impossible to do the calculations by hand. We used the open-source software package \Singular~\cite{DGPS} both for generating the system of equations and proving its inconsistency (by means of a Gröbner basis calculation). The full code is available as a set of ancillary files to the arXiv version of this article, while the code as well as its output are accessible via~\cite{RVV-Code}.

    We first find coefficients $\mu_i$ such that $\sum\mu_i f_i$ equals the integer $m$ in Appendix~\ref{Appendix m m'}. This already shows the inconsistency of the system when either $\fK$ has characteristic zero, or $\fK$ has a prime characteristic~$p$ which does not divide $m$. We exclude all the other prime characteristics as follows. We first redo the calculations with the roles of $\delta_3$ and $\delta_4$ swapped, which yields coefficients $\nu_i$ such that $\sum\nu_i f_i$ is the integer $m'$ in Appendix~\ref{Appendix m m'}. The Euclidean algorithm tells us that the greatest common divisor of $m$ and $m'$ is~$2$. Hence we only need to check that the system is inconsistent when $\fK$ has characteristic $2$, and a separate calculation shows that this is indeed the case.
\end{proof}

\begin{remark}
    We checked this by means of an independent calculation in the software package \Mathematica~\cite{Mathematica}. The code in~\cite{RVV-Code} generates output---note that the file containing the $f_i$ and $\mu_i$ is round $130\,\mathrm{MB}$ large---which may be used to check in \Mathematica\ that indeed $\sum\mu_i f_i=m$ and  $\sum\nu_i f_i=m'$ for the coefficients generated by \Singular. We preferred to explain how to arrive at a solution using \Singular\ because it is a freely available open source package, which for the present task seems at least as efficient as \Mathematica.
\end{remark}

Using Lemma~\ref{Lemma Degree Two} and Proposition~\ref{Proposition Injectivity AC}, from this we may deduce:

\begin{theorem}\label{deg-2-identities}
    If a homogeneous variety of $\fK$-algebras is cosmash associative, then it must satisfy at least one of the identities $xy - yx$, $xy$ or $x$. As a consequence, it is a subvariety of $\CA$. \noproof
\end{theorem}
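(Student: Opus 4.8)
The plan is to collect, in a short case analysis, the consequences of cosmash associativity that were established in Sections~\ref{Section Surjectivity} and~\ref{Section Injectivity}. So assume $\cV$ is a homogeneous cosmash associative variety; by Definition~\ref{Def Assoc Cosmash} the comparison maps $\Phi_{X,Y,Z}$ are then both surjective and injective. From surjectivity and Proposition~\ref{Proposition Lambda Rules} the identities~\eqref{Eq Lambda Rules} hold in $\cV$. Since the $\Phi_{X,Y,Z}$ are also injective, $\cV$ has an associative cosmash product, so Proposition~\ref{Proposition Degree Two} applies and $\cV$ satisfies some non-trivial identity of degree at most~$2$. Using that $\cV$ is a \emph{homogeneous} variety, a homogeneous component of this identity is again an identity, so we may take it homogeneous: if its degree is $1$ then $\cV$ is trivial (and satisfies $x$), while if its degree is $2$ then Lemma~\ref{Lemma Degree Two} gives that $\cV$ satisfies one of $xy-yx$, $xy+yx$ or $xy$.

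The next step is to eliminate genuine anti-commutativity. Suppose $\cV$ satisfies $xy+yx$. If $\kar(\fK)=2$ this identity coincides with $xy-yx$, so $\cV$ is commutative and we are back in the previous case. If $\kar(\fK)\neq 2$, then since the $\Phi_{X,Y,Z}$ are surjective, Lemma~\ref{Lemma Comm implies Assoc} gives that $\cV$ is also anti-associative, and Proposition~\ref{Proposition Injectivity AC} then forces $\cV$ to be abelian, i.e.\ to satisfy $xy$. Hence in every case $\cV$ satisfies one of $x$, $xy$ or $xy-yx$, which is the first assertion.

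For the consequence: if $\cV$ satisfies $x$ or $xy$ it is respectively the trivial variety $\TRIV$ or a variety of abelian algebras, and both are obviously subvarieties of $\CA$. If $\cV$ satisfies $xy-yx$, i.e.\ it is commutative, then one more application of Lemma~\ref{Lemma Comm implies Assoc} (commutativity implies associativity under surjectivity of the $\Phi_{X,Y,Z}$) shows $\cV$ is also associative, so $\cV\leq\CA$.

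I do not expect any substantial obstacle in this final step: the hard computational work is already contained in Proposition~\ref{Proposition Degree Two}, and what remains is essentially bookkeeping. The only points requiring a little care are the degenerate degree-$1$ case and the coincidence of commutativity and anti-commutativity in characteristic~$2$, both of which are disposed of by noting that they collapse into cases already treated; and the passage from ``identity of degree $\leq 2$'' to ``\emph{homogeneous} identity of degree $\leq 2$'', which is exactly where the hypothesis that $\cV$ is a homogeneous variety is used.
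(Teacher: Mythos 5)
Your proposal is correct and follows essentially the same route the paper intends for this theorem: Proposition~\ref{Proposition Degree Two} supplies a non-trivial identity of degree $\leq 2$, homogeneity reduces it to a homogeneous one, Lemma~\ref{Lemma Degree Two} gives commutativity, anti-commutativity or abelianness, and the anti-commutative case is ruled out via Lemma~\ref{Lemma Comm implies Assoc} together with Proposition~\ref{Proposition Injectivity AC} (with the characteristic~$2$ collapse handled as you do), while Lemma~\ref{Lemma Comm implies Assoc} again yields associativity in the commutative case, so $\cV\leq\CA$. This matches the paper's argument, which states the theorem as an immediate consequence of exactly these results.
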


\begin{remark}\label{Remark integral domain}
    The reasoning leading up to Proposition~\ref{Proposition Degree Two} stays valid when $\fK$ is an integral domain. On the other hand, its interpretation in Theorem~\ref{deg-2-identities} makes use of Lemma~\ref{Lemma Degree Two}, which is not valid at that level of generality. It seems the best we can do is consider varieties that satisfy a law of the form $\lambda xy+\mu yx=0$, something we shall not pursue in this article.
\end{remark}

\section{The operadic case}\label{Section Operadic Case}

We are now going to show that in the operadic context (where, recall, the varieties may be presented in terms of multilinear identities), the only cosmash associative subvarieties of $\CA$ are $\ABALG$ and $\TRIV$. By Remark~\ref{Remark operadic}, the following then applies in particular when the characteristic of $\fK$ is zero.

\begin{proposition}\label{Proposition Subvarieties}
    Let $\cV$ be an operadic variety of $\fK$-algebras with the cosmash associativity property. If $\cV$ is a proper subvariety of $\CA$, then either $\cV = \ABALG$ or $\cV =\TRIV$.
\end{proposition}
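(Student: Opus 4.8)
The plan is to combine a short classification of the possible varieties $\cV$ with an explicit failure of injectivity of one of the comparison maps. Since $\cV$ is operadic, it is determined by the set of all its multilinear identities, so the first task is to describe these. Inside $\CA$, commutativity and associativity collapse every multilinear monomial of degree $m$ to the single monomial $x_1\cdots x_m$; hence a multilinear polynomial of degree $m$ reduces, modulo the identities of $\CA$, to a scalar multiple $c\,x_1\cdots x_m$, so such an identity of $\cV$ either follows from commutativity and associativity (when $c=0$) or, since $\fK$ is a field, forces $x_1\cdots x_m=0$ in $\cV$ (when $c\neq 0$). As $\cV$ is a \emph{proper} subvariety of $\CA$, at least one nilpotency identity $x_1\cdots x_n=0$ must hold in it (otherwise all multilinear identities of $\cV$ would follow from those of $\CA$, forcing $\cV=\CA$); let $n$ be the least such integer. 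Since $x_1\cdots x_m=0$ for all $m\geq n$ then follows by multiplying, $\cV$ is exactly the variety determined by commutativity, associativity and $x_1\cdots x_n=0$; for $n=1$ this is $\TRIV$ and for $n=2$ it is $\ABALG$. It therefore suffices to show that when $n\geq 3$ the variety of ``$n$-nilpotent'' commutative associative $\fK$-algebras is not cosmash associative.

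So suppose $n\geq 3$, assume cosmash associativity, and let $A$, $B$, $C$ be free on single generators $a$, $b$, $c$. By item (3) of Examples~\ref{Examples binary cosmash}, the algebra $B\cosmash C$ has for a basis the monomials $b^jc^k$ with $j,k\geq 1$ and $j+k\leq n-1$; in particular the element $u:=b^{n-2}c$ is nonzero in $B\cosmash C$, since $n\geq 3$. Inside the coproduct $A+(B\cosmash C)$ the product $a\cdot u$ of the generator $a$ with $u$ is a two-fold product mixing the two factors, so it is annihilated neither by the defining relations of $A$ or of $B\cosmash C$, nor by $n$-nilpotency, which only constrains $n$-fold products; hence $a\cdot u$ represents a nonzero element of $A\cosmash(B\cosmash C)$. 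On the other hand, by the construction of the comparison maps in Section~\ref{Section Cosmash Associativity}, $\Phi_{A,B,C}$ is the restriction to cosmash products of $1_A+\iota_{B,C}\colon A+(B\cosmash C)\to A+B+C$, so that $\Phi_{A,B,C}(a\cdot u)=a\cdot\iota_{B,C}(u)=ab^{n-2}c$ regarded inside $A\cosmash B\cosmash C\leq A+B+C$. But $ab^{n-2}c$ has degree $n$, hence vanishes in $A+B+C$ by $n$-nilpotency. Thus $\Phi_{A,B,C}$ kills the nonzero element $a\cdot u$ and cannot be an isomorphism, contradicting cosmash associativity. Therefore $n\leq 2$, that is, $\cV\in\{\TRIV,\ABALG\}$.

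I expect the classification step to require the most care: beyond the routine reduction of multilinear identities modulo $\CA$, one should spell out that an operadic variety equals the variety determined by its own multilinear identities, and that $\cV$ is genuinely the full $n$-nilpotent truncation (so in particular $b^{n-1}\neq 0$, whence $u=b^{n-2}c\neq 0$ above). The other point needing a careful word is that $a\cdot u$ survives in the coproduct $A+(B\cosmash C)$, which rests on the explicit description of coproducts of algebras over a field as polynomial algebras modulo the relevant identities, already used in Examples~\ref{Examples binary cosmash} and Section~\ref{Section Cosmash Is Tensor}. The non-injectivity argument itself is very short; it is the commutative-associative counterpart of the phenomenon already isolated for the variety $\NIL$ in Example~\ref{Example 2-nilpotent magmatic algebras}.
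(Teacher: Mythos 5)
Your proposal is correct in outline, and it takes a genuinely different route from the paper. The paper's proof picks a multilinear identity witnessing properness, reduces it modulo commutativity and associativity to $(x_1\cdots x_{n-2})(x_{n-1}x_n)=0$, transports it along the comparison isomorphism into $X\cosmash(Y\cosmash Z)$, and then invokes the independence results (Lemma~\ref{Lemma Independence Ternary}, varying Remark~\ref{Remark Independence Variations}) to conclude that one of the factors must already vanish, setting up an induction on the degree. You instead classify the proper operadic subvarieties of $\CA$ outright as the truncations determined by commutativity, associativity and $x_1\cdots x_n=0$ with $n$ minimal (this reduction is fine: every multilinear identity collapses modulo $\CA$ to a scalar multiple of $x_1\cdots x_m$, and over a field a non-zero scalar forces the nilpotency identity), and then, for $n\geq 3$, exhibit an explicit element $a\cdot(b^{n-2}c)$ of $A\cosmash(B\cosmash C)$ which $\Phi_{A,B,C}$ sends to $ab^{n-2}c=0$ in $A+B+C$. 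Your version buys a cleaner structural picture and replaces the induction by a single witness of non-injectivity; the paper's version never needs to pin $\cV$ down exactly and reuses machinery it has already built. Both hinge on the same mechanism: an element which is only a two-fold product in the nested cosmash unfolds to a degree-$n$ product in $A+B+C$.

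The one step you have not actually proved is the crucial one: that $a\cdot u$, with $u=b^{n-2}c$, is non-zero in the coproduct $A+(B\cosmash C)$ computed in $\cV$. Saying that ``$n$-nilpotency only constrains $n$-fold products'' is not yet an argument, because the verbal ideal in the coproduct is generated by $n$-fold products of \emph{arbitrary} elements of the coproduct, and one must exclude that $a\otimes u$ is a combination of multiples of such; note also that Lemma~\ref{Lemma Independence Ternary} cannot be quoted as stated, since its hypothesis (no identities of degree $\leq 2$) fails for a commutative associative variety---only its final, commutative clause is of use, and that is precisely where the paper too has to work. The claim is true and the gap is routine to close: the coproduct in $\cV$ is the $\CA$-coproduct $A\oplus(A\otimes D)\oplus D$, $D=B\cosmash C$ (Proposition~\ref{Proposition Cosmash is Tensor} and the discussion preceding it), modulo the verbal ideal of $x_1\cdots x_n$. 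Bigrade pure elements by their degree in $a$ and their degree in $\{b,c\}$; the verbal ideal is spanned by products of at least $n$ bihomogeneous pure factors, and any such product of $a$-degree $1$ has at least $n-1$ factors lying in $D$, each of $\{b,c\}$-degree at least $2$, hence total $\{b,c\}$-degree at least $2(n-1)>n-1$. So the ideal has zero component in the bidegree $(1,n-1)$ of $a\otimes u$, which therefore survives. With this (or an equivalent independence argument in the spirit of Subsection~\ref{Subsection Linear Independence}) supplied, your proof is complete.
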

\begin{proof}
    We consider a proper subvariety $\cV$ of $\CA$ whose cosmash product is associative. Since it is a proper subvariety, there exists some homogeneous polynomial $\varphi$ characterising $\cV$ which cannot be deduced from associativity and commutativity. We prove by induction on the degree of $\varphi$ that its existence combined with the cosmash associativity property implies that either $xy=0$ or $x=0$ is an identity of~$\cV$.

    First, if $\deg(\varphi)=1$, then $x=0$ trivially holds in $\cV$.

    Next, if $\deg(\varphi)=2$, then Lemma~\ref{Lemma Degree Two} tells us that either $xy+yx=0$ or $xy=0$ holds in~$\cV$. If the characteristic of $\fK$ is $2$, then $xy+yx=0$ is an identity of $\CA$, so $xy=0$ must hold in the proper subvariety $\cV$. Otherwise, the weakest non-trivial equation of degree two we may add to commutativity is anti-commutativity. Then both commutativity and anti-commutativity are identities of the variety $\cV$, which implies that $xy=0$ holds.

    Finally, if $\deg(\varphi)=n \geq 3$, then by using commutativity and associativity the identity can be rewritten as
    \begin{equation}\label{Non-multilinear Equation}
        (x_1 \cdots x_{n-2})(x_{n-1}x_n)=0
    \end{equation}
    where by multilinearity all of the $x_i$ are different. Then, by considering $X$, $Y$ and~$Z$ to be the free algebras in $\cV$ respectively generated by $\lbrace x_1, \dots , x_{n-2}\rbrace$, $\lbrace x_{n-1}\rbrace$ and $\lbrace x_n \rbrace$, the hypothesis that the comparison map
    \begin{align*}
        X \diamond (Y\diamond Z) \to X \diamond Y \diamond Z
    \end{align*}
    is an isomorphism implies that $(x_1 \cdots x_{n-2})(x_{n-1}x_n)=0$ in the algebra
    \[
        {X\diamond (Y\diamond Z)}\leq {X+(Y\diamond Z)}.
    \]
    Lemma~\ref{Lemma Independence Ternary} allows us to vary on the reasoning in Remark~\ref{Remark Independence Variations} in order to deduce that either the identity $xy=0$ holds in $\cV$, or $x_{n-1}x_n=0$ in $Y\diamond Z$ so that $xy=0$ holds in $\cV$, or $x_1 \cdots x_{n-2} =0$ in~$X$, which gives a simple induction argument leading to the same conclusion.
\end{proof}

\begin{theorem}\label{main-result}
    Let $\fK$ be a field. The cosmash product in an operadic variety of $\fK$-algebras $\cV$ is associative if and only if $\cV$ is one of the following:
    \begin{enumerate}
        \item \label{main-result:comm} the variety $\CA$ of associative and commutative $\fK$-algebras;
        \item \label{main-result:abelian} the variety $\ABALG$ of abelian $\fK$-algebras, which is isomorphic to $\VECT$;
        \item \label{main-result:zero} the variety $\TRIV$, containing only $\fK$-algebras of cardinality one.
    \end{enumerate}
\end{theorem}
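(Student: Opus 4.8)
The plan is to package the two main results already in hand, Theorem~\ref{deg-2-identities} and Proposition~\ref{Proposition Subvarieties}, together with the positive examples of Section~\ref{Section Cosmash Is Tensor}. For the \emph{if} direction: when $\cV=\CA$, Corollary~\ref{Corollary Associative Cosmash} already says the cosmash product is associative, being the tensor product over $\fK$; when $\cV$ is $\ABALG\cong\VECT$ or $\TRIV$, the category is additive (respectively trivial), so by the first item of Examples~\ref{examples and counterexamples of cosmash asso} every cosmash product is the zero object and cosmash associativity holds vacuously.

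For the \emph{only if} direction: suppose $\cV$ is an operadic variety of $\fK$-algebras whose cosmash product is associative. A variety presented by multilinear identities is in particular presented by homogeneous ones, so $\cV$ is homogeneous and Theorem~\ref{deg-2-identities} applies: $\cV$ satisfies one of $xy-yx$, $xy$ or $x$, hence $\cV$ is a subvariety of $\CA$. Now either $\cV=\CA$, giving case~\eqref{main-result:comm}, or $\cV$ is a \emph{proper} subvariety of $\CA$; in the latter case $\cV$ is still operadic and cosmash associative, so Proposition~\ref{Proposition Subvarieties} forces $\cV=\ABALG$ (case~\eqref{main-result:abelian}) or $\cV=\TRIV$ (case~\eqref{main-result:zero}). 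Finally one records that the three varieties are genuinely distinct---for instance $\fK$ with its field multiplication lies in $\CA$ but not in $\ABALG$, while $\ABALG$ properly contains $\TRIV$---so that the trichotomy is non-degenerate.

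The step I expect to be the real obstacle is not in this assembly but upstream, and it is already carried out in the preceding sections: obtaining the $\lambda$-identities~\eqref{Eq Lambda Rules} from surjectivity (Proposition~\ref{Proposition Lambda Rules}), establishing the linear-independence results (Lemma~\ref{Lemma Independence} and Lemma~\ref{Lemma Independence Ternary}), and in particular running the Gröbner basis computation proving the resulting $96$-equation system inconsistent in Proposition~\ref{Proposition Degree Two}, together with the inductive reduction over the degree of a distinguishing identity in Proposition~\ref{Proposition Subvarieties}. Granting those, the proof of Theorem~\ref{main-result} itself is essentially a matter of bookkeeping and case distinction; the only care needed is to confirm that passing to the ``proper subvariety'' case preserves operadicity, which it does by hypothesis.
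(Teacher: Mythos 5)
Your proposal is correct and follows essentially the same route as the paper: the \emph{if} direction via Corollary~\ref{Corollary Associative Cosmash} and the triviality of cosmash products in additive categories, and the \emph{only if} direction by combining Theorem~\ref{deg-2-identities} (operadic hence homogeneous) with Proposition~\ref{Proposition Subvarieties}. The extra remarks on distinctness of the three varieties and on preservation of operadicity are harmless additions to what is otherwise the paper's own assembly argument.
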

\begin{proof}
    That cosmash associativity holds for these varieties is clear from Section~\ref{Section Cosmash Is Tensor} and the fact that additive categories have trivial cosmash products (Examples~\ref{examples and counterexamples of cosmash asso}). The other implication follows from Theorem~\ref{deg-2-identities} and Proposition~\ref{Proposition Subvarieties}.
\end{proof}

\section{Non-operadic varieties}\label{Section Non-Operadic Case}

Certain aspects of the theory may be extended to non-operadic varieties of $\fK$-algebras, which gives rise to some new examples, as well as a few complications.

In this context, what we call a \defn{variety of $\fK$-algebras} as in Definition~\ref{Definition identity variety} is simply a subvariety of~$\ALG$ in the sense of universal algebra: a~collection of $\fK$-algebras that satisfy a set of (not necessarily homogeneous or multilinear, but necessarily polynomial) equations. For instance:

\begin{example}[Alternating (anti-)associative algebras]\label{Example Alt}
    An algebra is \defn{alternating} if the identity $xx=0$ holds. We write $\ALT$ for the variety of anti-associative and alternating algebras over $\fK$. The variety $\ALT$ only differs from $\ANTI$ when $\kar(\fK)=2$; then the former is strictly smaller, because anti-commutativity $xy=-yx$ does not imply $xx=0$ in this case (cf.\ Remark~\ref{Remark operadic}). Rather, then the algebras are associative and commutative, so $\ALT$ may be seen as the subvariety of $\CA$ determined by the identity $xx=0$. Still under the condition that $\kar(\fK)=2$, it is easy to check that the coproduct of two alternating (anti-)associative $\fK$-algebras in the variety $\CA$ does actually lie in $\ALT$, which makes it the coproduct there as well. Hence the proof of Proposition~\ref{Proposition Cosmash is Tensor} applies, and the variety $\ALT$ has an associative cosmash product, just like $\CA$.
\end{example}

When $\fK$ is an infinite field, any variety of $\fK$-algebras is determined by its homogeneous identities only, thanks to the following result.

\begin{lemma}[\cite{ZSSS}]\label{identity-lemma}
    If $\cV$ is a variety of algebras over an infinite field, then
    all of its identities are of the form $\phi(x_1,\ldots, x_n) = 0$, where $\phi(x_1,\ldots, x_n)$ is a polynomial. Moreover, each homogeneous component $\psi(x_{i_1},\ldots, x_{i_m})$ of such an identity again gives rise to an identity $\psi(x_{i_1},\ldots, x_{i_m})=0$.\noproof
\end{lemma}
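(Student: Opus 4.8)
The plan is to prove the two assertions in turn. The first one --- that every identity of $\cV$ can be written as $\phi(x_1,\ldots,x_n)=0$ for a non-associative polynomial $\phi$ --- requires no hypothesis on $\fK$ at all; it is a matter of normal forms in the algebraic theory of $\fK$-algebras. Every term built from variables, the $\fK$-vector space operations (zero, addition, negation, scalar multiples) and the bilinear multiplication can be rewritten, by repeated use of the distributivity laws $a(b+c)=ab+ac$ and $(a+b)c=ac+bc$ together with $(\lambda a)b=\lambda(ab)=a(\lambda b)$ and the vector space axioms, as a $\fK$-linear combination of elements of the free magma on the variables involved, that is, as an element of $\fK[M(X)]$ in the sense of Section~\ref{Section Varieties}. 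An identity is a priori an equation $s=t$ between two terms; subtracting, it becomes $\phi=0$ with $\phi=s-t$ a non-associative polynomial.

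For the second assertion I would run the classical homogenisation argument via a Vandermonde determinant. Fix a variable, say $x_1$, and decompose $\phi=\sum_{k=0}^{d}\phi_k$, where $\phi_k$ gathers the monomials of $\phi$ in which $x_1$ occurs exactly $k$ times. Since the multiplication of a $\fK$-algebra is bilinear, a scalar factors out of a monomial according to the number of occurrences of each variable --- a straightforward induction on the binary tree representing the monomial, using $(\lambda a)b=\lambda(ab)=a(\lambda b)$ at each internal node --- so substituting $\lambda x_1$ for $x_1$ gives
\[
    \phi(\lambda x_1,x_2,\ldots,x_n)=\sum_{k=0}^{d}\lambda^{k}\,\phi_k(x_1,x_2,\ldots,x_n)
\]
for every $\lambda\in\fK$. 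As $\phi=0$ is an identity of $\cV$, the left-hand side vanishes when evaluated at any elements $a_1,\ldots,a_n$ of any algebra $A$ in $\cV$ and any $\lambda\in\fK$; hence $\sum_{k=0}^{d}\lambda^{k}\,\phi_k(a_1,\ldots,a_n)=0$ in the vector space underlying $A$, for all $\lambda\in\fK$. Because $\fK$ is infinite, we may pick $d+1$ pairwise distinct scalars $\lambda_0,\ldots,\lambda_d$; the resulting linear system in the unknowns $\phi_k(a_1,\ldots,a_n)$ has the invertible Vandermonde matrix $(\lambda_j^{k})_{0\le j,k\le d}$ as its coefficient matrix, so $\phi_k(a_1,\ldots,a_n)=0$ for every $k$. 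Thus each $\phi_k=0$ is again an identity of $\cV$.

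Finally I would iterate this one-variable step over $x_2,\ldots,x_n$ in turn. This decomposes $\phi$ into a sum of polynomials each of which is homogeneous in every variable --- that is, homogeneous of a single type in the sense of Definition~\ref{Definition identity variety} --- and each summand is an identity of $\cV$ by the argument above. Some summands may not involve all of $x_1,\ldots,x_n$ (for instance the type $(2,0)$ part of $x_1^2+x_1x_2$ is $x_1^2$); writing the variables that actually occur in such a summand as $x_{i_1},\ldots,x_{i_m}$ yields precisely the homogeneous components $\psi(x_{i_1},\ldots,x_{i_m})$ of the statement, each of which is an identity. I do not expect any real obstacle: the only two points requiring a little care are that the multiplication is merely bilinear and not associative, so that ``pulling a scalar out of a monomial'' must be justified by the tree induction rather than by any rearrangement of factors, and that the vanishing step is applied to a polynomial in $\lambda$ with coefficients in the vector space $A$, which is why the Vandermonde formulation is the convenient way to phrase the use of infiniteness.
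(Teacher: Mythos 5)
Your proof is correct: the paper itself gives no argument but simply cites~\cite{ZSSS}, and your Vandermonde-based homogenisation (substituting $\lambda x_1$, evaluating at $d+1$ distinct scalars of the infinite field, and iterating over the variables) is exactly the classical argument found there. The two points you flag---pulling scalars out of non-associative monomials by induction on the tree, and applying the Vandermonde system to vectors of $A$ with coefficients in $\fK$---are indeed the only places needing care, and you handle them properly.
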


Hence, while such a variety need not be operadic, the results of Section~\ref{Section Surjectivity} and Section~\ref{Section Injectivity} stay true. Thus we obtain the following:

\begin{proposition}
    Over an infinite field $\fK$, any variety of $\fK$-algebras with associative co\-smash products is a variety of associative and commutative $\fK$-algebras.\noproof
\end{proposition}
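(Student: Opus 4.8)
The plan is to reduce the statement to the homogeneous case already treated in Theorem~\ref{deg-2-identities}. The only extra ingredient is Lemma~\ref{identity-lemma}: over an infinite field, every identity of a variety $\cV$ of $\fK$-algebras is of the form $\phi = 0$ for a polynomial $\phi$, and each homogeneous component of such a $\phi$ is again an identity of $\cV$. Starting from any defining set $I\subseteq\fK[M(X)]$ for $\cV$, I would pass to the set $I'$ consisting of all homogeneous components of all elements of $I$. By Lemma~\ref{identity-lemma} each element of $I'$ is an identity of $\cV$; conversely, any algebra satisfying $I'$ satisfies every $\varphi\in I$, since $\varphi$ is the sum of its homogeneous components. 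Hence $I'$ determines $\cV$, and as $I'$ consists of homogeneous polynomials, $\cV$ is a \emph{homogeneous} variety in the sense of Definition~\ref{Definition identity variety}.

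With this reduction in hand the proof is immediate: $\cV$ is a homogeneous variety which, by hypothesis, has an associative cosmash product, so Theorem~\ref{deg-2-identities} applies and yields that $\cV$ satisfies one of the identities $xy-yx$, $xy$ or $x$; in each case $\cV$ is a subvariety of $\CA$. In particular every algebra in $\cV$ is associative and commutative, which is the desired conclusion.

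The one point requiring care — and the reason this appears as a separate statement rather than as a consequence of Theorem~\ref{main-result} — is that the arguments of Sections~\ref{Section Surjectivity} and~\ref{Section Injectivity} must genuinely rely only on homogeneity, never on operadicity (multilinearity). A quick audit confirms this: Proposition~\ref{Proposition Lambda Rules}, Lemma~\ref{Lemma Independence Ternary} and the computer-assisted Proposition~\ref{Proposition Degree Two} invoke homogeneity alone, so the reduction goes through cleanly. By contrast, the sharper classification of Section~\ref{Section Operadic Case}, which cuts $\cV$ down to exactly $\CA$, $\ABALG$ or $\TRIV$, does use multilinearity in Proposition~\ref{Proposition Subvarieties} and is \emph{not} asserted here; indeed it fails in the non-operadic setting, as Example~\ref{Example Alt} shows when $\kar(\fK)=2$, where the non-operadic variety $\ALT$ lies strictly between $\ABALG$ and $\CA$ yet still has an associative cosmash product. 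So this is the main (and essentially only) obstacle: checking that the homogeneous-case machinery does not secretly assume more than homogeneity.
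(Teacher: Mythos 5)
Your proposal is correct and follows the paper's own route: the paper likewise uses Lemma~\ref{identity-lemma} to observe that over an infinite field the variety is determined by its homogeneous identities, so that the results of Sections~\ref{Section Surjectivity} and~\ref{Section Injectivity} (culminating in Theorem~\ref{deg-2-identities}) apply and give a subvariety of $\CA$. Your explicit check that only homogeneity, not multilinearity, enters the machinery, and the contrast with Proposition~\ref{Proposition Subvarieties} and Example~\ref{Example Alt}, matches the paper's own remarks following the statement.
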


On the other hand, Proposition~\ref{Proposition Subvarieties} becomes more complicated if the varieties are not operadic, because multilinearity of the equations plays a crucial role here. In characteristic zero, Theorem~\ref{main-result} stays valid as is because of Remark~\ref{Remark operadic}. In prime characteristics this is no longer true, since the multilinearisation process may fail. It is exactly this which is exploited in the following example, which generalises Example~\ref{Example Alt}.

\begin{example}[$x^p=0$ in characteristic $p$]
    The variety $\CAp$ from Examples~\ref{Examples varieties} has an associative cosmash product. To see this, we simply add to the proof of Proposition~\ref{Proposition Cosmash is Tensor} a verification that the co\-product of two algebras that satisfy $x^p=0$ does again satisfy this equation. Indeed, if $x=(a,a'\otimes b',b)$ where $a^p=(a')^p=(b')^p=b^p=0$, then
    \begin{align*}
        x^p & =\bigl((a,0,b)+(0,a'\otimes b',0)\bigr)^p =(a,0,b)^p+(0,a'\otimes b',0)^p \\
            & =\bigl((a,0,0)+(0,0,b)\bigr)^p+(0,(a')^p\otimes (b')^p,0)                 \\
            & =(a,0,0)^p+(0,0,b)^p =(a^p,0,0)+(0,0,b^p)=0
    \end{align*}
    by the binomial theorem (used twice) and the fact that since $\kar(\fK)=p$, we have $\binom{p}{k}=0$ for each integer $0<k<p$.
\end{example}

Variations on this idea give rise to further examples; our aim is to study those in future work.

When the field $\fK$ is finite, Lemma~\ref{identity-lemma} no longer applies, and the identities of a variety of $\fK$-algebras may be non-homogeneous. This yields yet another kind of example, as for instance:

\begin{example}[Boolean rings]\label{Example Bool}
    The variety of Boolean rings may be seen as the subvariety $\BOOL_{\fK}$ of $\CA$, where $\fK$ is the finite field $\Z_2$, determined by the equation $x=xx$. It is easy to see that the coproduct of two $\Z_2$-algebras that satisfy $x=xx$ does again satisfy this equation. Hence we may extend Proposition~\ref{Proposition Cosmash is Tensor} to see that $\BOOL_{\Z_2}$ has an associative cosmash product. This shows that over finite fields, examples of a very different nature exist.
\end{example}

\section{Final remarks}\label{Section Final Remarks}
This work gives rise to several questions of quite diverse nature; here we state a few, often of a more categorical flavour, which we find particularly interesting. For some of these questions, we already know an answer---we plan to treat those in subsequent work.

\subsection{The lax-monoidal structure of the cosmash product}
The cosmash product, together with all of its higher-order versions and the canonical morphisms between them, forms a so-called \emph{(symmetric) lax-monoidal} structure on the category where it is considered. This point of view helps proving that cosmash associativity induces isomorphisms between all of the higher-order cosmash products as well, so that those become independent of any chosen bracketing. This development is the subject of the article~\cite{RVV3}; in the current article, Lemma~\ref{Lemma Comparison Injective Implication} gives one example of where this may lead.

\subsection{Categorical-algebraic consequences}
The examples---item (4) in~\ref{examples and counterexamples of cosmash asso}, for instance---indicate a close connection between properties of the cosmash product and properties of the induced (Higgins) commutators. Some general results on this relationship, and on its implications for Categorical Algebra, form the subject of the article~\cite{RVV2}: in our opinion, treating these in the present paper would make it less focused and too long. For instance, Remark~\ref{Remark AC} indicates a close relationship between cosmash associativity and algebraic coherence, which may well be valid outside the context of varieties of algebras over a field. A similar question makes sense for \emph{action accessibility}~\cite{BJ07}, which is known to be equivalent to algebraic coherence in our present context~\cite{Edinburgh}. The fact that there are no non-abelian varieties over a field which are both cosmash associative and \emph{locally algebraically cartesian closed}~\cite{Gray2012}---combine the start of the Introduction, Example~\ref{Example Lie fails surjectivity} and Example~\ref{Example 2-nilpotent magmatic algebras}---may also be an instance of a more general result.

\subsection{Further questions}
In the non-operadic case, we do not yet have a complete characterisation of the cosmash associative varieties of algebras. On the other hand, algebras over a field are very special, so are there any other examples? Remark~\ref{Remark integral domain} indicates that beyond the context of fields, certain results are still valid. And, what about cosmash associative varieties of groups, for instance? George Janelidze told us that the variety of \emph{differential graded rings} is cosmash associative; does this example fit a general pattern?

Finally: cosmash products are intrinsically symmetric. Can we characterise associativity for non-commutative algebras by means of some kind of non-symmetric cosmash product?

\section*{Acknowledgements}
We would like to thank Xabier García-Martínez for fruitful discussions on the subject of this article. The first author thanks the Université catholique de Louvain and IRMP for their hospitality during the writing of this paper.


\appendix

\section{The numbers \texorpdfstring{$m$ and $m'$}{m and m'} of Proposition~\ref{Proposition Degree Two}}\label{Appendix m m'}

\tiny
The number $m$ in the proof of Proposition~\ref{Proposition Degree Two} is
\begin{quote}
    $\seqsplit{74396795444726952714681837629686757881240662022215924745210907666577397441180147163464964221900491415624948921978905681863659912718475954729372154203903495273446853538276064959494310131192923769051184068535161006816210825040029629652997083427316077639698501360759914451399225447570785315441084997075121027375759347999077149538795566769957280247462901986878732962516216436338152096382505273589978994791621563274606732737945746221690145231014799621394065415995932666458922058579217942226350466610839335456731888664582265765584498632432615549674152508444592622255935921195596969138032872310751900552173274134607403718968536596033515889779681156951750418126790143352760027186609982149155274103331634448603177626071958991988431774390356744728517793552810949326985652930143291491349664287716617712953696517340386909696953499765267080568005594243345441370058305603316818161684991218210791806966661680528479796217605783513505959701955589499260346086801889033979888580872303550504010767286887701652647581383693253438897969930850140789169375700626730459389966931296382767714360170478838805480996110427502094998667818577669475228616493596843828856563398237405144244812678795323397772300506419759137217624265667188795105342969041752706959092923633786009598931405855596268136000169065316850401454162837723057594761011657683167811555169305845478478806338306621372765239069477161257910301103888314021610653760031920194648142391380660583196682184386769493167089282529043629317198851506306284426212763456951887877187260988027641352098896702838192128946348130910774022714695971415923138967341446570648449757966159632199432273509871727991402051661857055760183956912876231129472828158271237822875437178838933046838032875203107522465557155653974715245370419726147370009288477628184425570296255366586326873824993326507571208343702072702579137551525251738720048618757937148735558614770726433635794730485161672061413591959782054391246466720392229385803742447108575961668225764282550823260539590024775392568545963916466517954726718460531890085138462095009488493948319186544995507854452665364993006492371437112569937771459992368124388982487341083103876541230121660243641688648603635564275039223714342382341643239220080271381943138781673436155038}$,
\end{quote}
while $m'$ is equal to
\begin{quote}
    $\seqsplit{742839700513979359048299479947382299107179496017218434871605363438333569959566342563839977312088643747054988893738019193236830580350686697704189429387061790603105724165250446216101291211098709376230735924859262016628173855406613911879074744641561171079355282338173412879675253790342238192386393400270802735291525123094898077539503138451061904344486666671871626766269761074146844381799567124846479024535435268086110933813107573888607864833423265454216619525880119277995859545036670070827165075455687568866871395544565740411464520423207036587611398164012060384502798748322037005855583137142465733350495790997322093866926250976129000158920729052620739336368578744711102747141484155288077756995869958683709703473235719618842690521843663108018169921620109525786184803578543796393130465633886083978477919585756624340900521088853836736152465133190339304276798709673539506508004842183239052177431077654174239334020952015880060468802267001172105987015333729545825122330795138775202607270752857990665923542321887330695298254641336712116181999315214337739451097749804044079919638076327335571296538370152474826217092566696969386749267533232992758578455161805055475622775278399542859314452433776825056913365194716450492173099298557586864987290031096561495559088818397725499743514777552092423843493186947255888408168425803524249497298355907848117592572471307486966237062513887869473133646017396404024722734458163677562429482676808186923876755406242340095904091305673052377871685776998380347016787947937216838349386016751661418951834077834220442669244897298075130595074703242399313977942988374772287341330851237240414248477722647463141604230367901750465724712121880375680065290760717409362817386439052340226775186292960287608535212456229658172598999937503624729206343437423468263621205097544810494533672867412331182143200935652703516729404507531913977377030572803240346207388532660474898916510273808857946718363313130947612583075369197688162957609012191066751058610310544685082930165495225487207316887069998229752160192082481143424149584998788881376773544039278684005413389648004523874085957975788016819512928}$.
\end{quote}
\normalsize

\section{The system of equations \texorpdfstring{$(f_i=0)_{1\leq i\leq 96}$}{fi} of Proposition~\ref{Proposition Degree Two}} \label{Appendix Equations}

\tiny
\begin{multicols}{2}
    \ttab{1}{4}{p = (ab)c}{H^{c}(H^{a}(p)) - p = 0}
    {
    f_{1} &=  {\LLL_1}{\LLL_4}+{\LLL_3}{\LLR_1}+{\LLL_2}{\LRL_1}+{\LLL_1}{\LRR_1}   \\
    f_{2} &= {\LLL_2}{\LLL_4}+{\LLL_3}{\LLR_2}+{\LLL_2}{\LRL_2}+{\LLL_1}{\LRR_2}   \\
    f_{3} &= {\LLL_3}{\LLL_4}+{\LLL_3}{\LLR_3}+{\LLL_2}{\LRL_3}+{\LLL_1}{\LRR_3}   \\
    f_{4} &= {\LLL_4}{\LLL_4}+{\LLL_3}{\LLR_4}+{\LLL_2}{\LRL_4}+{\LLL_1}{\LRR_4}-1     }

    \ttab{5}{8}{p = (ab)c}{H^{c}(H^{b}(p)) - p = 0}
    {
    f_{5} &=  {\LLR_1}{\LLR_3}+{\LLL_1}{\LLR_4}+{\LLR_2}{\LRL_1}+{\LLR_1}{\LRR_1}   \\
    f_{6} &={\LLR_2}{\LLR_3}+{\LLL_2}{\LLR_4}+{\LLR_2}{\LRL_2}+{\LLR_1}{\LRR_2}   \\
    f_{7} &={\LLR_3}{\LLR_3}+{\LLL_3}{\LLR_4}+{\LLR_2}{\LRL_3}+{\LLR_1}{\LRR_3}-1 \\
    f_{8} &={\LLL_4}{\LLR_4}+{\LLR_3}{\LLR_4}+{\LLR_2}{\LRL_4}+{\LLR_1}{\LRR_4}       }

    \ttab{9}{12}{p = (ab)c}{H^{b}(H^{a}(p)) - H^{b}(p) = 0}
    {
    f_{9} &=  {\LLL_1}{\LLL_3}+{\LLL_4}{\LLR_1}+{\LLL_1}{\LRL_1}+{\LLL_2}{\LRR_1}-{\LLR_2} \\
    f_{10} &={\LLL_2}{\LLL_3}+{\LLL_4}{\LLR_2}+{\LLL_1}{\LRL_2}+{\LLL_2}{\LRR_2}-{\LLR_1} \\
    f_{11} &={\LLL_3}{\LLL_3}+{\LLL_4}{\LLR_3}+{\LLL_1}{\LRL_3}+{\LLL_2}{\LRR_3}-{\LLR_4} \\
    f_{12} &={\LLL_3}{\LLL_4}+{\LLL_4}{\LLR_4}+{\LLL_1}{\LRL_4}+{\LLL_2}{\LRR_4}-{\LLR_3}     }

    \ttab{13}{16}{p = (ab)c}{H^{a}(H^{b}(p)) - H^{a}(p) = 0}
    {
    f_{13} &=  {\LLL_1}{\LLR_3}+{\LLR_1}{\LLR_4}+{\LLR_1}{\LRL_1}+{\LLR_2}{\LRR_1}-{\LLL_2} \\
    f_{14} &={\LLL_2}{\LLR_3}+{\LLR_2}{\LLR_4}+{\LLR_1}{\LRL_2}+{\LLR_2}{\LRR_2}-{\LLL_1} \\
    f_{15} &={\LLL_3}{\LLR_3}+{\LLR_3}{\LLR_4}+{\LLR_1}{\LRL_3}+{\LLR_2}{\LRR_3}-{\LLL_4} \\
    f_{16} &={\LLL_4}{\LLR_3}+{\LLR_4}{\LLR_4}+{\LLR_1}{\LRL_4}+{\LLR_2}{\LRR_4}-{\LLL_3}     }

    \ttab{17}{20}{p = a(bc)}{H^{a}(H^{b}(p)) - p = 0}
    {
    f_{17} &=  {\LRL_1}{\LRL_2}+{\LLR_1}{\LRL_3}+{\LLL_1}{\LRL_4}+{\LRL_1}{\LRR_1}   \\
    f_{18} &={\LRL_2}{\LRL_2}+{\LLR_2}{\LRL_3}+{\LLL_2}{\LRL_4}+{\LRL_1}{\LRR_2}-1 \\
    f_{19} &={\LLR_3}{\LRL_3}+{\LRL_2}{\LRL_3}+{\LLL_3}{\LRL_4}+{\LRL_1}{\LRR_3}   \\
    f_{20} &={\LLR_4}{\LRL_3}+{\LLL_4}{\LRL_4}+{\LRL_2}{\LRL_4}+{\LRL_1}{\LRR_4}       }

    \ttab{21}{24}{p = a(bc)}{H^{a}(H^{c}(p)) - p = 0}
    {
    f_{21} &=  {\LRR_1}{\LRR_1}+{\LRL_1}{\LRR_2}+{\LLR_1}{\LRR_3}+{\LLL_1}{\LRR_4}-1 \\
    f_{22} &={\LRL_2}{\LRR_2}+{\LRR_1}{\LRR_2}+{\LLR_2}{\LRR_3}+{\LLL_2}{\LRR_4}   \\
    f_{23} &={\LRL_3}{\LRR_2}+{\LLR_3}{\LRR_3}+{\LRR_1}{\LRR_3}+{\LLL_3}{\LRR_4}   \\
    f_{24} &={\LRL_4}{\LRR_2}+{\LLR_4}{\LRR_3}+{\LLL_4}{\LRR_4}+{\LRR_1}{\LRR_4}
    }

    \ttab{25}{28}{p = a(bc)}{H^{b}(H^{c}(p)) - H^{b}(p) = 0}
    {
    f_{25} &=  {\LRL_1}{\LRR_1}+{\LRR_1}{\LRR_2}+{\LLL_1}{\LRR_3}+{\LLR_1}{\LRR_4}-{\LRL_2} \\
    f_{26} &={\LRL_2}{\LRR_1}+{\LRR_2}{\LRR_2}+{\LLL_2}{\LRR_3}+{\LLR_2}{\LRR_4}-{\LRL_1} \\
    f_{27} &={\LRL_3}{\LRR_1}+{\LLL_3}{\LRR_3}+{\LRR_2}{\LRR_3}+{\LLR_3}{\LRR_4}-{\LRL_4} \\
    f_{28} &={\LRL_4}{\LRR_1}+{\LLL_4}{\LRR_3}+{\LLR_4}{\LRR_4}+{\LRR_2}{\LRR_4}-{\LRL_3}     }

    \ttab{29}{32}{p = a(bc)}{H^{c}(H^{b}(p)) - H^{c}(p) = 0}
    {
    f_{29} &=  {\LRL_1}{\LRL_1}+{\LLL_1}{\LRL_3}+{\LLR_1}{\LRL_4}+{\LRL_2}{\LRR_1}-{\LRR_2} \\
    f_{30} &={\LRL_1}{\LRL_2}+{\LLL_2}{\LRL_3}+{\LLR_2}{\LRL_4}+{\LRL_2}{\LRR_2}-{\LRR_1} \\
    f_{31} &={\LLL_3}{\LRL_3}+{\LRL_1}{\LRL_3}+{\LLR_3}{\LRL_4}+{\LRL_2}{\LRR_3}-{\LRR_4} \\
    f_{32} &={\LLL_4}{\LRL_3}+{\LLR_4}{\LRL_4}+{\LRL_1}{\LRL_4}+{\LRL_2}{\LRR_4}-{\LRR_3}     }
\end{multicols}

\ttab{33}{40}{p = a(b(cd))}{H^{ca}(H^{ac}(H^{d}(H^{b}(p)))) - H^{db}(H^{bd}(H^{c}(p))) = 0}
{
f_{33} &=  {\LLR_3}{\LRL_1}{\LRL_1}+{\LLR_1}{\LLR_1}{\LRL_3}+{\LLL_1}{\LLR_3}{\LRL_3}+{\LLR_1}{\LRL_1}{\LRR_1}+{\LLR_1}{\LRL_4}{\LRR_1}+{\LRL_2}{\LRR_1}{\LRR_1}+{\LRL_1}{\LRL_2}{\LRR_3}+{\LLL_1}{\LRL_4}{\LRR_3}-{\LRL_3}{\LRL_3}-{\LRL_1}{\LRR_3} \\
f_{34} &={\LLR_3}{\LRL_1}{\LRL_2}+{\LLR_1}{\LLR_2}{\LRL_3}+{\LLL_2}{\LLR_3}{\LRL_3}+{\LLR_2}{\LRL_4}{\LRR_1}+{\LLR_1}{\LRL_1}{\LRR_2}+{\LRL_2}{\LRR_1}{\LRR_2}+{\LRL_2}{\LRL_2}{\LRR_3}+{\LLL_2}{\LRL_4}{\LRR_3}-{\LRL_3}{\LRL_4}-{\LRL_2}{\LRR_3} \\
f_{35} &={\LLL_3}{\LLR_3}{\LRL_3}+{\LLR_1}{\LLR_3}{\LRL_3}+{\LLR_3}{\LRL_1}{\LRL_3}+{\LLR_3}{\LRL_4}{\LRR_1}+{\LLR_1}{\LRL_1}{\LRR_3}+{\LRL_2}{\LRL_3}{\LRR_3}+{\LLL_3}{\LRL_4}{\LRR_3}+{\LRL_2}{\LRR_1}{\LRR_3}-{\LRL_1}{\LRL_3}-{\LRL_1}{\LRR_1} \\
f_{36} &={\LLL_4}{\LLR_3}{\LRL_3}+{\LLR_1}{\LLR_4}{\LRL_3}+{\LLR_3}{\LRL_1}{\LRL_4}+{\LLR_4}{\LRL_4}{\LRR_1}+{\LLL_4}{\LRL_4}{\LRR_3}+{\LRL_2}{\LRL_4}{\LRR_3}+{\LLR_1}{\LRL_1}{\LRR_4}+{\LRL_2}{\LRR_1}{\LRR_4}-{\LRL_1}{\LRL_4}-{\LRL_2}{\LRR_1} \\
f_{37} &=  {\LLR_4}{\LRL_1}{\LRL_1}+{\LLR_1}{\LLR_2}{\LRL_3}+{\LLL_1}{\LLR_4}{\LRL_3}+{\LLR_2}{\LRL_1}{\LRR_1}+{\LLR_1}{\LRL_4}{\LRR_2}+{\LRL_2}{\LRR_1}{\LRR_2}+{\LRL_1}{\LRL_2}{\LRR_4}+{\LLL_1}{\LRL_4}{\LRR_4}-{\LRL_3}{\LRL_4}-{\LRL_1}{\LRR_4} \\
f_{38} &={\LLR_4}{\LRL_1}{\LRL_2}+{\LLR_2}{\LLR_2}{\LRL_3}+{\LLL_2}{\LLR_4}{\LRL_3}+{\LLR_2}{\LRL_1}{\LRR_2}+{\LLR_2}{\LRL_4}{\LRR_2}+{\LRL_2}{\LRR_2}{\LRR_2}+{\LRL_2}{\LRL_2}{\LRR_4}+{\LLL_2}{\LRL_4}{\LRR_4}-{\LRL_4}{\LRL_4}-{\LRL_2}{\LRR_4} \\
f_{39} &={\LLR_2}{\LLR_3}{\LRL_3}+{\LLL_3}{\LLR_4}{\LRL_3}+{\LLR_4}{\LRL_1}{\LRL_3}+{\LLR_3}{\LRL_4}{\LRR_2}+{\LLR_2}{\LRL_1}{\LRR_3}+{\LRL_2}{\LRR_2}{\LRR_3}+{\LRL_2}{\LRL_3}{\LRR_4}+{\LLL_3}{\LRL_4}{\LRR_4}-{\LRL_2}{\LRL_3}-{\LRL_1}{\LRR_2} \\
f_{40} &={\LLL_4}{\LLR_4}{\LRL_3}+{\LLR_2}{\LLR_4}{\LRL_3}+{\LLR_4}{\LRL_1}{\LRL_4}+{\LLR_4}{\LRL_4}{\LRR_2}+{\LLR_2}{\LRL_1}{\LRR_4}+{\LLL_4}{\LRL_4}{\LRR_4}+{\LRL_2}{\LRL_4}{\LRR_4}+{\LRL_2}{\LRR_2}{\LRR_4}-{\LRL_2}{\LRL_4}-{\LRL_2}{\LRR_2} }

\ttab{41}{48}{p = a(b(cd))}{H^{da}(H^{ad}(H^{c}(H^{b}(p)))) - H^{cb}(H^{bc}(H^{d}(p))) = 0}
{
f_{41} &=  {\LLL_3}{\LRL_1}{\LRL_1}+{\LLL_1}{\LLL_3}{\LRL_3}+{\LLL_1}{\LLR_1}{\LRL_3}+{\LRL_1}{\LRL_2}{\LRL_3}+{\LLR_1}{\LRL_1}{\LRL_4}+{\LLL_1}{\LRL_3}{\LRL_4}+{\LLL_1}{\LRL_1}{\LRR_1}+{\LRL_1}{\LRL_2}{\LRR_1}-{\LRL_3}{\LRR_3}-{\LRR_1}{\LRR_3}  \\
f_{42} &={\LLL_3}{\LRL_1}{\LRL_2}+{\LLL_2}{\LLL_3}{\LRL_3}+{\LLL_1}{\LLR_2}{\LRL_3}+{\LRL_2}{\LRL_2}{\LRL_3}+{\LLR_2}{\LRL_1}{\LRL_4}+{\LLL_2}{\LRL_3}{\LRL_4}+{\LLL_1}{\LRL_1}{\LRR_2}+{\LRL_1}{\LRL_2}{\LRR_2}-{\LRR_2}{\LRR_3}-{\LRL_3}{\LRR_4}  \\
f_{43} &={\LLL_3}{\LLL_3}{\LRL_3}+{\LLL_1}{\LLR_3}{\LRL_3}+{\LLL_3}{\LRL_1}{\LRL_3}+{\LRL_2}{\LRL_3}{\LRL_3}+{\LLR_3}{\LRL_1}{\LRL_4}+{\LLL_3}{\LRL_3}{\LRL_4}+{\LLL_1}{\LRL_1}{\LRR_3}+{\LRL_1}{\LRL_2}{\LRR_3}-{\LRR_1}{\LRR_1}-{\LRL_1}{\LRR_3}  \\
f_{44} &={\LLL_3}{\LLL_4}{\LRL_3}+{\LLL_1}{\LLR_4}{\LRL_3}+{\LLL_3}{\LRL_1}{\LRL_4}+{\LLR_4}{\LRL_1}{\LRL_4}+{\LLL_4}{\LRL_3}{\LRL_4}+{\LRL_2}{\LRL_3}{\LRL_4}+{\LLL_1}{\LRL_1}{\LRR_4}+{\LRL_1}{\LRL_2}{\LRR_4}-{\LRR_1}{\LRR_2}-{\LRL_1}{\LRR_4}  \\
f_{45} &={\LLL_4}{\LRL_1}{\LRL_1}+{\LLL_1}{\LLL_4}{\LRL_3}+{\LLL_2}{\LLR_1}{\LRL_3}+{\LLR_1}{\LRL_2}{\LRL_4}+{\LRL_1}{\LRL_2}{\LRL_4}+{\LLL_1}{\LRL_4}{\LRL_4}+{\LLL_2}{\LRL_1}{\LRR_1}+{\LRL_2}{\LRL_2}{\LRR_1}-{\LRL_4}{\LRR_3}-{\LRR_1}{\LRR_4}  \\
f_{46} &={\LLL_4}{\LRL_1}{\LRL_2}+{\LLL_2}{\LLL_4}{\LRL_3}+{\LLL_2}{\LLR_2}{\LRL_3}+{\LLR_2}{\LRL_2}{\LRL_4}+{\LRL_2}{\LRL_2}{\LRL_4}+{\LLL_2}{\LRL_4}{\LRL_4}+{\LLL_2}{\LRL_1}{\LRR_2}+{\LRL_2}{\LRL_2}{\LRR_2}-{\LRL_4}{\LRR_4}-{\LRR_2}{\LRR_4}  \\
f_{47} &={\LLL_3}{\LLL_4}{\LRL_3}+{\LLL_2}{\LLR_3}{\LRL_3}+{\LLL_4}{\LRL_1}{\LRL_3}+{\LLR_3}{\LRL_2}{\LRL_4}+{\LRL_2}{\LRL_3}{\LRL_4}+{\LLL_3}{\LRL_4}{\LRL_4}+{\LLL_2}{\LRL_1}{\LRR_3}+{\LRL_2}{\LRL_2}{\LRR_3}-{\LRR_1}{\LRR_2}-{\LRL_2}{\LRR_3}  \\
f_{48} &={\LLL_4}{\LLL_4}{\LRL_3}+{\LLL_2}{\LLR_4}{\LRL_3}+{\LLL_4}{\LRL_1}{\LRL_4}+{\LLR_4}{\LRL_2}{\LRL_4}+{\LLL_4}{\LRL_4}{\LRL_4}+{\LRL_2}{\LRL_4}{\LRL_4}+{\LLL_2}{\LRL_1}{\LRR_4}+{\LRL_2}{\LRL_2}{\LRR_4}-{\LRR_2}{\LRR_2}-{\LRL_2}{\LRR_4}
}

\ttab{49}{56}{p = a((bc)d)}{H^{ca}(H^{ac}(H^{b}(H^{d}(p)))) - H^{db}(H^{bd}(H^{c}(p))) = 0}
{
f_{49} &=  {\LLL_3}{\LRL_1}{\LRR_1}+{\LLL_1}{\LRR_1}{\LRR_1}+{\LRL_1}{\LRL_3}{\LRR_2}+{\LRL_1}{\LRR_1}{\LRR_2}+{\LLL_1}{\LLL_3}{\LRR_3}+{\LLL_1}{\LLR_1}{\LRR_3}+{\LLR_1}{\LRL_1}{\LRR_4}+{\LLL_1}{\LRL_3}{\LRR_4}-{\LLR_3}{\LRL_3}-{\LLR_1}{\LRR_3} \\
f_{50} &={\LLL_3}{\LRL_2}{\LRR_1}+{\LRL_2}{\LRL_3}{\LRR_2}+{\LLL_1}{\LRR_1}{\LRR_2}+{\LRL_1}{\LRR_2}{\LRR_2}+{\LLL_2}{\LLL_3}{\LRR_3}+{\LLL_1}{\LLR_2}{\LRR_3}+{\LLR_2}{\LRL_1}{\LRR_4}+{\LLL_2}{\LRL_3}{\LRR_4}-{\LLR_4}{\LRL_3}-{\LLR_2}{\LRR_3} \\
f_{51} &={\LLL_3}{\LRL_3}{\LRR_1}+{\LRL_3}{\LRL_3}{\LRR_2}+{\LLL_3}{\LLL_3}{\LRR_3}+{\LLL_1}{\LLR_3}{\LRR_3}+{\LLL_1}{\LRR_1}{\LRR_3}+{\LRL_1}{\LRR_2}{\LRR_3}+{\LLR_3}{\LRL_1}{\LRR_4}+{\LLL_3}{\LRL_3}{\LRR_4}-{\LLR_3}{\LRL_1}-{\LLR_1}{\LRR_1} \\
f_{52} &={\LLL_3}{\LRL_4}{\LRR_1}+{\LRL_3}{\LRL_4}{\LRR_2}+{\LLL_3}{\LLL_4}{\LRR_3}+{\LLL_1}{\LLR_4}{\LRR_3}+{\LLR_4}{\LRL_1}{\LRR_4}+{\LLL_4}{\LRL_3}{\LRR_4}+{\LLL_1}{\LRR_1}{\LRR_4}+{\LRL_1}{\LRR_2}{\LRR_4}-{\LLR_4}{\LRL_1}-{\LLR_2}{\LRR_1} \\
f_{53} &={\LLL_4}{\LRL_1}{\LRR_1}+{\LLL_2}{\LRR_1}{\LRR_1}+{\LRL_1}{\LRL_4}{\LRR_2}+{\LRL_2}{\LRR_1}{\LRR_2}+{\LLL_1}{\LLL_4}{\LRR_3}+{\LLL_2}{\LLR_1}{\LRR_3}+{\LLR_1}{\LRL_2}{\LRR_4}+{\LLL_1}{\LRL_4}{\LRR_4}-{\LLR_3}{\LRL_4}-{\LLR_1}{\LRR_4} \\
f_{54} &={\LLL_4}{\LRL_2}{\LRR_1}+{\LRL_2}{\LRL_4}{\LRR_2}+{\LLL_2}{\LRR_1}{\LRR_2}+{\LRL_2}{\LRR_2}{\LRR_2}+{\LLL_2}{\LLL_4}{\LRR_3}+{\LLL_2}{\LLR_2}{\LRR_3}+{\LLR_2}{\LRL_2}{\LRR_4}+{\LLL_2}{\LRL_4}{\LRR_4}-{\LLR_4}{\LRL_4}-{\LLR_2}{\LRR_4} \\
f_{55} &={\LLL_4}{\LRL_3}{\LRR_1}+{\LRL_3}{\LRL_4}{\LRR_2}+{\LLL_3}{\LLL_4}{\LRR_3}+{\LLL_2}{\LLR_3}{\LRR_3}+{\LLL_2}{\LRR_1}{\LRR_3}+{\LRL_2}{\LRR_2}{\LRR_3}+{\LLR_3}{\LRL_2}{\LRR_4}+{\LLL_3}{\LRL_4}{\LRR_4}-{\LLR_3}{\LRL_2}-{\LLR_1}{\LRR_2} \\
f_{56} &={\LLL_4}{\LRL_4}{\LRR_1}+{\LRL_4}{\LRL_4}{\LRR_2}+{\LLL_4}{\LLL_4}{\LRR_3}+{\LLL_2}{\LLR_4}{\LRR_3}+{\LLR_4}{\LRL_2}{\LRR_4}+{\LLL_4}{\LRL_4}{\LRR_4}+{\LLL_2}{\LRR_1}{\LRR_4}+{\LRL_2}{\LRR_2}{\LRR_4}-{\LLR_4}{\LRL_2}-{\LLR_2}{\LRR_2} }

\ttab{57}{64}{p = a((bc)d)}{H^{ba}(H^{ab}(H^{c}(H^{d}(p)))) - H^{dc}(H^{cd}(H^{b}(p))) = 0}
{
f_{57} &=  {\LLR_3}{\LRL_1}{\LRR_1}+{\LLR_1}{\LRR_1}{\LRR_1}+{\LRR_1}{\LRR_1}{\LRR_2}+{\LLR_1}{\LLR_1}{\LRR_3}+{\LLL_1}{\LLR_3}{\LRR_3}+{\LRL_1}{\LRR_2}{\LRR_3}+{\LLR_1}{\LRR_1}{\LRR_4}+{\LLL_1}{\LRR_3}{\LRR_4}-{\LLL_3}{\LRL_3}-{\LLL_1}{\LRR_3} \\
f_{58} &={\LLR_3}{\LRL_2}{\LRR_1}+{\LLR_1}{\LRR_1}{\LRR_2}+{\LRR_1}{\LRR_2}{\LRR_2}+{\LLR_1}{\LLR_2}{\LRR_3}+{\LLL_2}{\LLR_3}{\LRR_3}+{\LRL_2}{\LRR_2}{\LRR_3}+{\LLR_2}{\LRR_1}{\LRR_4}+{\LLL_2}{\LRR_3}{\LRR_4}-{\LLL_4}{\LRL_3}-{\LLL_2}{\LRR_3} \\
f_{59} &={\LLR_3}{\LRL_3}{\LRR_1}+{\LLL_3}{\LLR_3}{\LRR_3}+{\LLR_1}{\LLR_3}{\LRR_3}+{\LLR_1}{\LRR_1}{\LRR_3}+{\LRL_3}{\LRR_2}{\LRR_3}+{\LRR_1}{\LRR_2}{\LRR_3}+{\LLR_3}{\LRR_1}{\LRR_4}+{\LLL_3}{\LRR_3}{\LRR_4}-{\LLL_3}{\LRL_1}-{\LLL_1}{\LRR_1} \\
f_{60} &={\LLR_3}{\LRL_4}{\LRR_1}+{\LLL_4}{\LLR_3}{\LRR_3}+{\LLR_1}{\LLR_4}{\LRR_3}+{\LRL_4}{\LRR_2}{\LRR_3}+{\LLR_1}{\LRR_1}{\LRR_4}+{\LLR_4}{\LRR_1}{\LRR_4}+{\LRR_1}{\LRR_2}{\LRR_4}+{\LLL_4}{\LRR_3}{\LRR_4}-{\LLL_4}{\LRL_1}-{\LLL_2}{\LRR_1} \\
f_{61} &={\LLR_4}{\LRL_1}{\LRR_1}+{\LLR_2}{\LRR_1}{\LRR_1}+{\LRR_1}{\LRR_2}{\LRR_2}+{\LLR_1}{\LLR_2}{\LRR_3}+{\LLL_1}{\LLR_4}{\LRR_3}+{\LLR_1}{\LRR_2}{\LRR_4}+{\LRL_1}{\LRR_2}{\LRR_4}+{\LLL_1}{\LRR_4}{\LRR_4}-{\LLL_3}{\LRL_4}-{\LLL_1}{\LRR_4} \\
f_{62} &={\LLR_4}{\LRL_2}{\LRR_1}+{\LLR_2}{\LRR_1}{\LRR_2}+{\LRR_2}{\LRR_2}{\LRR_2}+{\LLR_2}{\LLR_2}{\LRR_3}+{\LLL_2}{\LLR_4}{\LRR_3}+{\LLR_2}{\LRR_2}{\LRR_4}+{\LRL_2}{\LRR_2}{\LRR_4}+{\LLL_2}{\LRR_4}{\LRR_4}-{\LLL_4}{\LRL_4}-{\LLL_2}{\LRR_4} \\
f_{63} &={\LLR_4}{\LRL_3}{\LRR_1}+{\LLR_2}{\LLR_3}{\LRR_3}+{\LLL_3}{\LLR_4}{\LRR_3}+{\LLR_2}{\LRR_1}{\LRR_3}+{\LRR_2}{\LRR_2}{\LRR_3}+{\LLR_3}{\LRR_2}{\LRR_4}+{\LRL_3}{\LRR_2}{\LRR_4}+{\LLL_3}{\LRR_4}{\LRR_4}-{\LLL_3}{\LRL_2}-{\LLL_1}{\LRR_2} \\
f_{64} &={\LLR_4}{\LRL_4}{\LRR_1}+{\LLL_4}{\LLR_4}{\LRR_3}+{\LLR_2}{\LLR_4}{\LRR_3}+{\LLR_2}{\LRR_1}{\LRR_4}+{\LLR_4}{\LRR_2}{\LRR_4}+{\LRL_4}{\LRR_2}{\LRR_4}+{\LRR_2}{\LRR_2}{\LRR_4}+{\LLL_4}{\LRR_4}{\LRR_4}-{\LLL_4}{\LRL_2}-{\LLL_2}{\LRR_2} }

\ttab{65}{72}{p = (a(bc))d}{H^{cd}(H^{dc}(H^{b}(H^{a}(p)))) - H^{ab}(H^{ba}(H^{c}(p))) = 0}
{
f_{65} &=  {\LLL_1}{\LLL_3}{\LLL_3}+{\LLL_1}{\LLL_3}{\LLR_1}+{\LLL_1}{\LLL_3}{\LRL_1}+{\LLL_4}{\LLR_1}{\LRL_1}+{\LLL_1}{\LLL_4}{\LRL_3}+{\LLL_2}{\LRL_1}{\LRL_3}+{\LLL_1}{\LLL_1}{\LRR_1}+{\LLL_2}{\LRL_1}{\LRR_1}-{\LLR_3}{\LRR_1}-{\LLL_3}{\LRR_3} \\
f_{66} &={\LLL_2}{\LLL_3}{\LLL_3}+{\LLL_1}{\LLL_3}{\LLR_2}+{\LLL_4}{\LLR_2}{\LRL_1}+{\LLL_1}{\LLL_3}{\LRL_2}+{\LLL_2}{\LLL_4}{\LRL_3}+{\LLL_2}{\LRL_2}{\LRL_3}+{\LLL_1}{\LLL_1}{\LRR_2}+{\LLL_2}{\LRL_1}{\LRR_2}-{\LLR_3}{\LRR_2}-{\LLL_3}{\LRR_4} \\
f_{67} &={\LLL_3}{\LLL_3}{\LLL_3}+{\LLL_1}{\LLL_3}{\LLR_3}+{\LLL_4}{\LLR_3}{\LRL_1}+{\LLL_1}{\LLL_3}{\LRL_3}+{\LLL_3}{\LLL_4}{\LRL_3}+{\LLL_2}{\LRL_3}{\LRL_3}+{\LLL_1}{\LLL_1}{\LRR_3}+{\LLL_2}{\LRL_1}{\LRR_3}-{\LLR_1}{\LRR_1}-{\LLL_1}{\LRR_3} \\
f_{68} &={\LLL_3}{\LLL_3}{\LLL_4}+{\LLL_1}{\LLL_3}{\LLR_4}+{\LLL_4}{\LLR_4}{\LRL_1}+{\LLL_4}{\LLL_4}{\LRL_3}+{\LLL_1}{\LLL_3}{\LRL_4}+{\LLL_2}{\LRL_3}{\LRL_4}+{\LLL_1}{\LLL_1}{\LRR_4}+{\LLL_2}{\LRL_1}{\LRR_4}-{\LLR_1}{\LRR_2}-{\LLL_1}{\LRR_4} \\
f_{69} &={\LLL_1}{\LLL_3}{\LLL_4}+{\LLL_2}{\LLL_3}{\LLR_1}+{\LLL_1}{\LLL_4}{\LRL_1}+{\LLL_4}{\LLR_1}{\LRL_2}+{\LLL_1}{\LLL_4}{\LRL_4}+{\LLL_2}{\LRL_1}{\LRL_4}+{\LLL_1}{\LLL_2}{\LRR_1}+{\LLL_2}{\LRL_2}{\LRR_1}-{\LLR_4}{\LRR_1}-{\LLL_4}{\LRR_3} \\
f_{70} &={\LLL_2}{\LLL_3}{\LLL_4}+{\LLL_2}{\LLL_3}{\LLR_2}+{\LLL_1}{\LLL_4}{\LRL_2}+{\LLL_4}{\LLR_2}{\LRL_2}+{\LLL_2}{\LLL_4}{\LRL_4}+{\LLL_2}{\LRL_2}{\LRL_4}+{\LLL_1}{\LLL_2}{\LRR_2}+{\LLL_2}{\LRL_2}{\LRR_2}-{\LLR_4}{\LRR_2}-{\LLL_4}{\LRR_4} \\
f_{71} &={\LLL_3}{\LLL_3}{\LLL_4}+{\LLL_2}{\LLL_3}{\LLR_3}+{\LLL_4}{\LLR_3}{\LRL_2}+{\LLL_1}{\LLL_4}{\LRL_3}+{\LLL_3}{\LLL_4}{\LRL_4}+{\LLL_2}{\LRL_3}{\LRL_4}+{\LLL_1}{\LLL_2}{\LRR_3}+{\LLL_2}{\LRL_2}{\LRR_3}-{\LLR_2}{\LRR_1}-{\LLL_2}{\LRR_3} \\
f_{72} &={\LLL_3}{\LLL_4}{\LLL_4}+{\LLL_2}{\LLL_3}{\LLR_4}+{\LLL_4}{\LLR_4}{\LRL_2}+{\LLL_1}{\LLL_4}{\LRL_4}+{\LLL_4}{\LLL_4}{\LRL_4}+{\LLL_2}{\LRL_4}{\LRL_4}+{\LLL_1}{\LLL_2}{\LRR_4}+{\LLL_2}{\LRL_2}{\LRR_4}-{\LLR_2}{\LRR_2}-{\LLL_2}{\LRR_4} }

\ttab{73}{80}{p = (a(bc))d}{H^{bd}(H^{db}(H^{c}(H^{a}(p)))) - H^{ac}(H^{ca}(H^{b}(p))) = 0}
{
f_{73} &=  {\LLL_3}{\LLR_1}{\LLR_1}+{\LLL_1}{\LLL_3}{\LLR_3}+{\LLL_1}{\LLR_3}{\LRL_1}+{\LLL_1}{\LLR_1}{\LRR_1}+{\LLL_4}{\LLR_1}{\LRR_1}+{\LLL_2}{\LRR_1}{\LRR_1}+{\LLL_1}{\LLL_4}{\LRR_3}+{\LLL_2}{\LRL_1}{\LRR_3}-{\LLR_3}{\LRL_1}-{\LLL_3}{\LRL_3} \\
f_{74} &={\LLL_3}{\LLR_1}{\LLR_2}+{\LLL_2}{\LLL_3}{\LLR_3}+{\LLL_1}{\LLR_3}{\LRL_2}+{\LLL_4}{\LLR_2}{\LRR_1}+{\LLL_1}{\LLR_1}{\LRR_2}+{\LLL_2}{\LRR_1}{\LRR_2}+{\LLL_2}{\LLL_4}{\LRR_3}+{\LLL_2}{\LRL_2}{\LRR_3}-{\LLR_3}{\LRL_2}-{\LLL_3}{\LRL_4} \\
f_{75} &={\LLL_3}{\LLL_3}{\LLR_3}+{\LLL_3}{\LLR_1}{\LLR_3}+{\LLL_1}{\LLR_3}{\LRL_3}+{\LLL_4}{\LLR_3}{\LRR_1}+{\LLL_3}{\LLL_4}{\LRR_3}+{\LLL_1}{\LLR_1}{\LRR_3}+{\LLL_2}{\LRL_3}{\LRR_3}+{\LLL_2}{\LRR_1}{\LRR_3}-{\LLR_1}{\LRL_1}-{\LLL_1}{\LRL_3} \\
f_{76} &={\LLL_3}{\LLL_4}{\LLR_3}+{\LLL_3}{\LLR_1}{\LLR_4}+{\LLL_1}{\LLR_3}{\LRL_4}+{\LLL_4}{\LLR_4}{\LRR_1}+{\LLL_4}{\LLL_4}{\LRR_3}+{\LLL_2}{\LRL_4}{\LRR_3}+{\LLL_1}{\LLR_1}{\LRR_4}+{\LLL_2}{\LRR_1}{\LRR_4}-{\LLR_1}{\LRL_2}-{\LLL_1}{\LRL_4} \\
f_{77} &={\LLL_3}{\LLR_1}{\LLR_2}+{\LLL_1}{\LLL_3}{\LLR_4}+{\LLL_1}{\LLR_4}{\LRL_1}+{\LLL_1}{\LLR_2}{\LRR_1}+{\LLL_4}{\LLR_1}{\LRR_2}+{\LLL_2}{\LRR_1}{\LRR_2}+{\LLL_1}{\LLL_4}{\LRR_4}+{\LLL_2}{\LRL_1}{\LRR_4}-{\LLR_4}{\LRL_1}-{\LLL_4}{\LRL_3} \\
f_{78} &={\LLL_3}{\LLR_2}{\LLR_2}+{\LLL_2}{\LLL_3}{\LLR_4}+{\LLL_1}{\LLR_4}{\LRL_2}+{\LLL_1}{\LLR_2}{\LRR_2}+{\LLL_4}{\LLR_2}{\LRR_2}+{\LLL_2}{\LRR_2}{\LRR_2}+{\LLL_2}{\LLL_4}{\LRR_4}+{\LLL_2}{\LRL_2}{\LRR_4}-{\LLR_4}{\LRL_2}-{\LLL_4}{\LRL_4} \\
f_{79} &={\LLL_3}{\LLR_2}{\LLR_3}+{\LLL_3}{\LLL_3}{\LLR_4}+{\LLL_1}{\LLR_4}{\LRL_3}+{\LLL_4}{\LLR_3}{\LRR_2}+{\LLL_1}{\LLR_2}{\LRR_3}+{\LLL_2}{\LRR_2}{\LRR_3}+{\LLL_3}{\LLL_4}{\LRR_4}+{\LLL_2}{\LRL_3}{\LRR_4}-{\LLR_2}{\LRL_1}-{\LLL_2}{\LRL_3} \\
f_{80} &={\LLL_3}{\LLL_4}{\LLR_4}+{\LLL_3}{\LLR_2}{\LLR_4}+{\LLL_1}{\LLR_4}{\LRL_4}+{\LLL_4}{\LLR_4}{\LRR_2}+{\LLL_4}{\LLL_4}{\LRR_4}+{\LLL_1}{\LLR_2}{\LRR_4}+{\LLL_2}{\LRL_4}{\LRR_4}+{\LLL_2}{\LRR_2}{\LRR_4}-{\LLR_2}{\LRL_2}-{\LLL_2}{\LRL_4} }

\ttab{81}{88}{p = ((ab)c)d}{H^{bd}(H^{db}(H^{a}(H^{c}(p)))) - H^{ac}(H^{ca}(H^{b}(p))) = 0}
{
f_{81} &=  {\LLL_1}{\LLL_3}{\LLR_3}+{\LLL_1}{\LLR_1}{\LLR_3}+{\LLL_3}{\LLR_1}{\LRL_1}+{\LLR_1}{\LLR_4}{\LRL_1}+{\LLL_1}{\LLR_4}{\LRL_3}+{\LLR_2}{\LRL_1}{\LRL_3}+{\LLL_1}{\LLR_1}{\LRR_1}+{\LLR_2}{\LRL_1}{\LRR_1}-{\LLL_3}{\LLR_3}-{\LLR_1}{\LLR_3} \\
f_{82} &={\LLL_2}{\LLL_3}{\LLR_3}+{\LLL_1}{\LLR_2}{\LLR_3}+{\LLR_2}{\LLR_4}{\LRL_1}+{\LLL_3}{\LLR_1}{\LRL_2}+{\LLL_2}{\LLR_4}{\LRL_3}+{\LLR_2}{\LRL_2}{\LRL_3}+{\LLL_1}{\LLR_1}{\LRR_2}+{\LLR_2}{\LRL_1}{\LRR_2}-{\LLR_2}{\LLR_3}-{\LLL_3}{\LLR_4} \\
f_{83} &={\LLL_3}{\LLL_3}{\LLR_3}+{\LLL_1}{\LLR_3}{\LLR_3}+{\LLR_3}{\LLR_4}{\LRL_1}+{\LLL_3}{\LLR_1}{\LRL_3}+{\LLL_3}{\LLR_4}{\LRL_3}+{\LLR_2}{\LRL_3}{\LRL_3}+{\LLL_1}{\LLR_1}{\LRR_3}+{\LLR_2}{\LRL_1}{\LRR_3}-{\LLR_1}{\LLR_1}-{\LLL_1}{\LLR_3} \\
f_{84} &={\LLL_3}{\LLL_4}{\LLR_3}+{\LLL_1}{\LLR_3}{\LLR_4}+{\LLR_4}{\LLR_4}{\LRL_1}+{\LLL_4}{\LLR_4}{\LRL_3}+{\LLL_3}{\LLR_1}{\LRL_4}+{\LLR_2}{\LRL_3}{\LRL_4}+{\LLL_1}{\LLR_1}{\LRR_4}+{\LLR_2}{\LRL_1}{\LRR_4}-{\LLR_1}{\LLR_2}-{\LLL_1}{\LLR_4} \\
f_{85} &={\LLL_1}{\LLL_4}{\LLR_3}+{\LLL_2}{\LLR_1}{\LLR_3}+{\LLL_4}{\LLR_1}{\LRL_1}+{\LLR_1}{\LLR_4}{\LRL_2}+{\LLL_1}{\LLR_4}{\LRL_4}+{\LLR_2}{\LRL_1}{\LRL_4}+{\LLL_2}{\LLR_1}{\LRR_1}+{\LLR_2}{\LRL_2}{\LRR_1}-{\LLL_4}{\LLR_3}-{\LLR_1}{\LLR_4} \\
f_{86} &={\LLL_2}{\LLL_4}{\LLR_3}+{\LLL_2}{\LLR_2}{\LLR_3}+{\LLL_4}{\LLR_1}{\LRL_2}+{\LLR_2}{\LLR_4}{\LRL_2}+{\LLL_2}{\LLR_4}{\LRL_4}+{\LLR_2}{\LRL_2}{\LRL_4}+{\LLL_2}{\LLR_1}{\LRR_2}+{\LLR_2}{\LRL_2}{\LRR_2}-{\LLL_4}{\LLR_4}-{\LLR_2}{\LLR_4} \\
f_{87} &={\LLL_3}{\LLL_4}{\LLR_3}+{\LLL_2}{\LLR_3}{\LLR_3}+{\LLR_3}{\LLR_4}{\LRL_2}+{\LLL_4}{\LLR_1}{\LRL_3}+{\LLL_3}{\LLR_4}{\LRL_4}+{\LLR_2}{\LRL_3}{\LRL_4}+{\LLL_2}{\LLR_1}{\LRR_3}+{\LLR_2}{\LRL_2}{\LRR_3}-{\LLR_1}{\LLR_2}-{\LLL_2}{\LLR_3} \\
f_{88} &={\LLL_4}{\LLL_4}{\LLR_3}+{\LLL_2}{\LLR_3}{\LLR_4}+{\LLR_4}{\LLR_4}{\LRL_2}+{\LLL_4}{\LLR_1}{\LRL_4}+{\LLL_4}{\LLR_4}{\LRL_4}+{\LLR_2}{\LRL_4}{\LRL_4}+{\LLL_2}{\LLR_1}{\LRR_4}+{\LLR_2}{\LRL_2}{\LRR_4}-{\LLR_2}{\LLR_2}-{\LLL_2}{\LLR_4} }

\ttab{89}{96}{p = ((ab)c)d}{H^{ad}(H^{da}(H^{b}(H^{c}(p)))) - H^{bc}(H^{cb}(H^{a}(p))) = 0}
{
f_{89} &=  {\LLR_1}{\LLR_1}{\LLR_3}+{\LLL_1}{\LLR_3}{\LLR_3}+{\LLR_1}{\LLR_3}{\LRL_1}+{\LLR_1}{\LLR_1}{\LRR_1}+{\LLR_1}{\LLR_4}{\LRR_1}+{\LLR_2}{\LRR_1}{\LRR_1}+{\LLL_1}{\LLR_4}{\LRR_3}+{\LLR_2}{\LRL_1}{\LRR_3}-{\LLL_3}{\LLL_3}-{\LLL_1}{\LLR_3} \\
f_{90} &={\LLR_1}{\LLR_2}{\LLR_3}+{\LLL_2}{\LLR_3}{\LLR_3}+{\LLR_1}{\LLR_3}{\LRL_2}+{\LLR_2}{\LLR_4}{\LRR_1}+{\LLR_1}{\LLR_1}{\LRR_2}+{\LLR_2}{\LRR_1}{\LRR_2}+{\LLL_2}{\LLR_4}{\LRR_3}+{\LLR_2}{\LRL_2}{\LRR_3}-{\LLL_3}{\LLL_4}-{\LLL_2}{\LLR_3} \\
f_{91} &={\LLL_3}{\LLR_3}{\LLR_3}+{\LLR_1}{\LLR_3}{\LLR_3}+{\LLR_1}{\LLR_3}{\LRL_3}+{\LLR_3}{\LLR_4}{\LRR_1}+{\LLR_1}{\LLR_1}{\LRR_3}+{\LLL_3}{\LLR_4}{\LRR_3}+{\LLR_2}{\LRL_3}{\LRR_3}+{\LLR_2}{\LRR_1}{\LRR_3}-{\LLL_1}{\LLL_3}-{\LLL_1}{\LLR_1} \\
f_{92} &={\LLL_4}{\LLR_3}{\LLR_3}+{\LLR_1}{\LLR_3}{\LLR_4}+{\LLR_1}{\LLR_3}{\LRL_4}+{\LLR_4}{\LLR_4}{\LRR_1}+{\LLL_4}{\LLR_4}{\LRR_3}+{\LLR_2}{\LRL_4}{\LRR_3}+{\LLR_1}{\LLR_1}{\LRR_4}+{\LLR_2}{\LRR_1}{\LRR_4}-{\LLL_1}{\LLL_4}-{\LLL_2}{\LLR_1} \\
f_{93} &={\LLR_1}{\LLR_2}{\LLR_3}+{\LLL_1}{\LLR_3}{\LLR_4}+{\LLR_1}{\LLR_4}{\LRL_1}+{\LLR_1}{\LLR_2}{\LRR_1}+{\LLR_1}{\LLR_4}{\LRR_2}+{\LLR_2}{\LRR_1}{\LRR_2}+{\LLL_1}{\LLR_4}{\LRR_4}+{\LLR_2}{\LRL_1}{\LRR_4}-{\LLL_3}{\LLL_4}-{\LLL_1}{\LLR_4} \\
f_{94} &={\LLR_2}{\LLR_2}{\LLR_3}+{\LLL_2}{\LLR_3}{\LLR_4}+{\LLR_1}{\LLR_4}{\LRL_2}+{\LLR_1}{\LLR_2}{\LRR_2}+{\LLR_2}{\LLR_4}{\LRR_2}+{\LLR_2}{\LRR_2}{\LRR_2}+{\LLL_2}{\LLR_4}{\LRR_4}+{\LLR_2}{\LRL_2}{\LRR_4}-{\LLL_4}{\LLL_4}-{\LLL_2}{\LLR_4} \\
f_{95} &={\LLR_2}{\LLR_3}{\LLR_3}+{\LLL_3}{\LLR_3}{\LLR_4}+{\LLR_1}{\LLR_4}{\LRL_3}+{\LLR_3}{\LLR_4}{\LRR_2}+{\LLR_1}{\LLR_2}{\LRR_3}+{\LLR_2}{\LRR_2}{\LRR_3}+{\LLL_3}{\LLR_4}{\LRR_4}+{\LLR_2}{\LRL_3}{\LRR_4}-{\LLL_2}{\LLL_3}-{\LLL_1}{\LLR_2} \\
f_{96} &={\LLL_4}{\LLR_3}{\LLR_4}+{\LLR_2}{\LLR_3}{\LLR_4}+{\LLR_1}{\LLR_4}{\LRL_4}+{\LLR_4}{\LLR_4}{\LRR_2}+{\LLR_1}{\LLR_2}{\LRR_4}+{\LLL_4}{\LLR_4}{\LRR_4}+{\LLR_2}{\LRL_4}{\LRR_4}+{\LLR_2}{\LRR_2}{\LRR_4}-{\LLL_2}{\LLL_4}-{\LLL_2}{\LLR_2} }

\normalsize

\end{document}